\documentclass[11pt,oneside]{amsart}
\usepackage{amsmath,amsthm,amssymb,amscd,fancybox,esint}
\usepackage[a4paper,left=3cm,right=3cm,top=3cm,bottom=3cm]{geometry}
\usepackage{cite}
\renewcommand{\epsilon}{\varepsilon}
\renewcommand{\phi}{\varphi}


 \newcommand{\bZ}{\mathbb{Z}}
\newcommand{\bR}{\mathbb{R}} \newcommand{\bN}{\mathbb{N}}
\newcommand{\cF}{\mathcal{F}} 
\newcommand{\dS}{\mathcal{S}(\bR^n)} \setcounter{tocdepth}{2}

\newcommand{\qp}{\mathcal{Q}}

\newcommand{\Zn}{\mathbb{Z}^n}
\newcommand{\Rn}{\mathbb{R}^n}

\newcommand{\brac}[1]{\langle #1\rangle}
\newtheorem{theorem}{Theorem}[section]
\newtheorem{lemma}[theorem]{Lemma}
\newtheorem{proposition}[theorem]{Proposition}

\newtheorem{corollary}[theorem]{Corollary}
\theoremstyle{definition}
\newtheorem{definition}[theorem]{Definition}
\newtheorem{example}[theorem]{Example}
\theoremstyle{remark}
\newtheorem{remark}[theorem]{Remark}
\numberwithin{equation}{section}

\def\bZ{{\mathbb Z}}

\def\bN{{\mathbb N}}

\def\bC{{\mathbb C}}
\def\bR{{\mathbb R}}


\def\cC{\mathcal{C}}
\def\cD{\mathbb{R}^n}

\def\cF{\mathcal{F}}

\def\cP{\mathcal{P}}
\def\cQ{\mathcal{Q}}

\def\cS{\mathcal{S}}

\newcommand{\C}{\mathbb{C}}
\newcommand{\R}{\mathbb{R}}
\newcommand{\vf}{\mathbf{f}}
\newcommand{\vg}{\mathbf{g}}

\newcommand{\vx}{\mathbf{x}}
\newcommand{\vc}{\mathbf{c}}

\newcommand{\vs}{\mathbf{s}}
\newcommand{\vy}{\mathbf{y}}
\newcommand{\vz}{\mathbf{z}}
\newcommand{\vt}{\mathbf{t}}
\providecommand{\abs}[2][]{#1\lvert#2#1\rvert}
\providecommand{\norm}[2][]{#1\lVert#2#1\rVert}
\def\cprime{$'$} 
\begin{document}
\title[Matrix weighted modulation spaces]{ Matrix weighted modulation spaces}
\author[M.\ Nielsen]{Morten Nielsen}
\address{Department of Mathematical Sciences\\ Aalborg
  University\\ Skjernvej 4A\\ DK-9220 Aalborg East\\ Denmark}
\email{mnielsen@math.aau.dk}
\subjclass[2020]{Primary 42C40, 42C15 ; Secondary 
42B15}
\begin{abstract}
 Given a matrix-weight $W$ in the Muckenhoupt class $\mathbf{A}_p(\bR^n)$, $1\leq p<\infty$, we    introduce corresponding vector-valued continuous and discrete $\alpha$-modulation  spaces $M^{s,\alpha}_{p,q}(W)$ and $m^{s,\alpha}_{p,q}(W)$ and prove their equivalence through the use of adapted tight frames. Compatible notions of molecules and almost diagonal matrices are also introduced, and an application to the study of pseudo-differential operators on vector valued spaces is given.

\end{abstract}
\keywords{Matrix weighted space, Besov space, $\alpha$-modulation space, almost diagonal matrix,
Fourier multiplier operator} 
\maketitle

\section{Introduction}
The matrix-weighted $L^p$-space $L^p(W)$, $1\leq p<\infty$, are defined
for $W\colon
\cD\to\C^{N\times N}$ a measurable matrix-valued weight function that is positive definite a.e., as  the family of
measurable functions $\vf\colon \cD\to\C^N$ satisfying 
\begin{equation}\label{eq:lp}
  \norm{\vf}_{L^p(W)}:=\Biggl(\int_{\cD}\abs{W^{1/p}(x)\vf(x)}^p\,dx\Biggr)^{1/p}<\infty.
\end{equation}
Factorizing over 
 $$\{\vf\colon \cD\to\C^N;\norm{\vf}_{L^p(W)}=0\},$$
turns $L^p(W)$ into a Banach space. 
These weighted spaces of vector-valued functions have attracted a great deal of  attention recently (see, e.g., \cite{MR4454483,NieSik21,MR3687948,MR2737763,Rou03a}) partly due to the fact that the setup  generates a number interesting mathematical questions related to vector valued functions that is naturally  connected to various classical results on Muckenhoupt weights in harmonic analysis. A highlight 
in the matrix-weighted case is the formulation of a suitable matrix $A_p$ condition by Nazarov, Treil and Volberg that completely characterizes boundedness of the Riesz-transform(s) on $L^p(W)$ for $1<p<\infty$, see \cite{Vol97a,TreVol97a}.

From a more applied point of view, the matrix weighted setup is also of interest due to the fact that the inherent flexibility obtained by varying  properties of the weight function, including adjustment of the size $N$, allows one to adapt the setup to be useful for applications in a variety of mathematical modeling scenarios. One area  where weighted function spaces can be useful  is in the study of partial differential equations with some sort of degeneracy, e.g., ''perturbed'' elliptic  equations with various types of singularities in the coefficients, where it is natural to look for solutions in weighted smoothness spaces, see \cite{MR1410258,MR1469972} and references therein.

 As is well-known, one can use $L^p$-spaces to build a variety of useful smoothness spaces by imposing restrictions on suitable local components of functions measured by a (weighted) $L^p$-norm. Roudenko was the first to apply such an approach in the matrix weighted setup, based on $L^p(W)$ spaces as defined in Eq.\ \eqref{eq:lp}, see \cite{Rou03a}, where she introduced a very natural notion of matrix weighted Besov spaces $B^s_{p,q}(W)$. This work was later extended by Frazier and Roudenko  \cite{Frazier:2004ub, MR4263690} to matrix-weighted Triebel-Lizorkin spaces. 

 Besov  spaces are created by measuring $L^p$-norms of local components of functions corresponding to a dyadic decomposition of the frequency space. However, it was  observed in the scalar case by Triebel \cite{MR0725159} that the same general decomposition approach, using other partitions of the frequency space, can yield other types of useful smoothness spaces such as modulation spaces that are associated with  a uniform decomposition of the frequency space.

The main contribution of the present paper is to present a construction of matrix weighted $\alpha$-modulation space for weights that satisfy a certain matrix Muckenhoupt condition.  The scalar-value $\alpha$-modulation spaces $M^{s,\alpha}_{p,q}(\bR^n)$ are a family of smoothness spaces based on polynomial type decompositions of the frequency space. The family contains the Besov spaces, and the
modulation spaces introduced by Feichtinger \cite{fei}, as special
``endpoint'' cases. The family offers the flexibility to tune general time-frequency properties measured by the smoothness norm by adjusting the parameter $\alpha$ as it determines the structure  of the polynomial
decomposition of the frequency space $\bR^n$ used to define
the corresponding smoothness space. 

The (scalar) $\alpha$-modulation spaces were introduced by Gr\"obner
\cite{Groebner1992} using a general framework of decomposition type
Banach spaces introduced by Feichtinger and Gr\"obner in
\cite{MR87b:46020,MR89a:46053}.  To the best of the author's knowledge,  $\alpha$-modulation spaces have not yet been considered with weights,
so even in the scalar case (i.e., $N=1$) the results presented in the present paper are new.

Scalar (unweighted) $\alpha$-modulation spaces have proven useful in the study of classes of pseudo-differential equations with symbols in certain adapted H\"ormander classes, see \cite{MR2476899,MR2292720,MR2423282,MR3106727}, so there is ample reason to believe that weighted  $\alpha$-modulation spaces can play a role in the study of, e.g., perturbed elliptic equations with  singularities in the coefficients.

The structure of the paper is as follows. In Section \ref{sec:1}, we first recall the time-frequency structure  of scalar-valued $\alpha$-modulation spaces and proceed in Section \ref{sec:22} to extend the definition to obtain  (quasi-)Banach spaces in a certain matrix weighted vector-valued setting. Section \ref{sec:ms} is devoted to obtaining a full discrete characterisation of matrix weighted  $\alpha$-modulation spaces using a simple adapted band-limited frame for the matrix weighted  $\alpha$-modulation spaces. An algebra of discrete almost diagonal matrices adapted  to the matrix weighted  $\alpha$-modulation spaces is introduced in Section \ref{sec:4}, which allows us to define a natural notion of molecules for matrix weighted  $\alpha$-modulation spaces. The almost diagonal matrices and molecules may be used to simplify the study of various operators  on the matrix weighted smoothness spaces, making it much easier to obtain various boundedness results for, e.g., partial differential operators. As an example of the almost diagonal approach, we conclude the paper in Section \ref{sec:5} with a study of Fourier multipliers on matrix weighted  $\alpha$-modulation spaces.

\section{Vector-valued smoothness spaces}\label{sec:1}
In this section we extend the definition of  $\alpha$-modulation spaces to obtain  (quasi-)Banach spaces in a matrix weighted vector-valued setting in a way such  that 
Roudenko's matrix weighted Besov spaces \cite{Rou03a} becomes a special ''endpoint case'' corresponding to $\alpha=1$.
The scalar
$\alpha$-modulation spaces form a family of smoothness spaces that contain  modulation and Besov spaces as special limit  cases. 

The spaces
are defined by imposing restrictions on local components of functions defined using  from specific decompositions of the frequency space. Specifically, the general structure of the local components is governed  by a parameter $\alpha$, belonging  to the interval
$[0,1]$. This parameter determines a segmentation of the frequency
domain from which the spaces are built. We will use the same type of decompositions in the vector-valued setting.

\subsection{The family of $\alpha$-coverings of the frequency domain}\label{sec:mod_space}

We first recall the notion of an $\alpha$-covering as introduced in \cite{Groebner1992,MR87b:46020}.

\begin{definition}\label{def:cov}
A countable collection $\qp$ of measurable subsets $Q\subset \bR^n$ is called an
admissible covering of $\bR^n$ if 
\begin{itemize}
\item[i.]$\bR^n=\cup_{Q\in \qp} Q$ 
\item [ii.] There
exists $n_0<\infty$ such that 
  $\#\{Q'\in\qp:Q\cap Q'\not=\emptyset\}\leq n_0$ for all $Q\in\qp$.
\end{itemize}  
    An
  admissible covering is called an $\alpha$-covering, $0\leq
  \alpha\leq 1$, of $\bR^n$ if
 \begin{itemize}
\item[iii.] 
  $|Q|\asymp \langle \xi\rangle^{\alpha n}$ (uniformly) for all $\xi\in Q$ and for all $Q\in\qp$,
\item [iv.] There exists a constant $ K <\infty$ such that 
$$\sup_{Q\in\qp} \frac{R_Q}{r_Q}\leq K,$$
where $r_Q :=\sup\{r\in [0,\infty):\exists c_r\in\bR^n: B(c_r,r)\subseteq Q\}$
and $R_Q :=\inf\{r\in (0,\infty):\exists c_r\in\bR^n: B(c_r,r)\supseteq Q\}$, where $B(x,r)$ denotes the Euclidean ball in $\bR^n$ centered at $x$ with radius $r$.
\end{itemize}
\end{definition}
\begin{remark}
For $ a  \in\bR^n$ and $r_0>0$,  we define  the corresponding cube $R[{a},{r_0}]$ as
\begin{equation}\label{eq:cube}
R[{a},{r_0}]:= a+r_0[-1,1]^n.
\end{equation}
We notice that for $Q\in\qp$, condition iv.\ in Definition \ref{def:cov} ensures that we have the following  containment in cubes,
$$R[\xi_1,r_Q/(2\sqrt{n})]\subseteq Q\subseteq R[\xi_2,R_Q],$$
for some $\xi_1,\xi_2\in Q$. 
\end{remark}

The following example, which can be considered a ``canonical'' $\alpha$-covering, was first considered in \cite{Groebner1992}, see also \cite{Borup2006a}.
\begin{example}\label{ex:cov}
For $\alpha\in [0,1)$, there exists $c_0>0$ such that for any $c_1\geq c_0$, the family of sets
$$B_k^\alpha:=B\big(\xi_k, c_1 r_k\big),\qquad k\in\Zn,$$
with $B(c,r)$ denoting the (open) Euclidean ball of radius $r>0$ centered at $c\in\bR^n$, and 
\begin{equation}\label{eq:rk}
r_k:=\langle k\rangle^{\frac{\alpha}{1-\alpha}},\qquad \xi_k:=kr_k,   \qquad k\in\bZ^n,
    \end{equation}
  with $\langle \xi\rangle:=(1+|\xi|^2)^{1/2}$, $\xi\in\bR^n$,  form an  $\alpha$-covering.
\end{example}

\begin{remark}
For the case $\alpha=1$, which  is not part of the covering families considered in Example \ref{ex:cov}, a dyadic Lizorkin-type covering of $\bR^n$  form an example of a $1$-covering leading to the matrix-valued Besov spaces considered in \cite{Rou03a}. We refer to \cite{Borup2007} for addition information on Lizorkin-type coverings.
\end{remark}

It is known that any pair of $\alpha$-coverings $\cQ=\{Q\}$ and $\cP=\{P\}$ satisfy the following finite overlap condition, see \cite[Lemma B.2]{Borup2006a},
\begin{equation}\label{eq:overl}
\sup_{Q\in \cQ} \#A_Q<+\infty,\qquad \text{where }A_Q:=\{P\in \cP: P\cap Q\not=\emptyset\}.
\end{equation}
We will need a so-called bounded admissible partition of unity adapted to $\alpha$-coverings. For  $f\in L_1(\bR^n)$, we let  $$\mathcal{F}(f)(\xi):=(2\pi)^{-n/2}\int_{\bR^n}
f(x)e^{- i x\cdot\xi}\,dx,\qquad \xi\in\bR^n,$$
denote the Fourier transform, and we use the standard notation $\hat{f}(\xi)=\mathcal{F}(f)(\xi)$. With this normalisation, the Fourier transform extends to a unitary transform on $L^2(\bR^n)$ and we denote the inverse Fourier transform by $\mathcal{F}^{-1}$.

\begin{definition}\label{de:bapu}
Let  $\qp$ be an $\alpha$-covering of $\bR^n$. A corresponding
 bounded admissible partition of unity (BAPU) $\{\psi_Q\}_{Q\in\qp}$ is a family
of smooth functions satisfying
\begin{itemize}
\item[i.] $\text{supp}(\psi_Q)\subset Q$
\item[ii.] $\sum_{Q\in\qp} \psi_Q(\xi)=1$
\item[iii.]  There exists a uniform constant $C$ such that for $Q\in \cQ$ and $\xi_Q\in Q$,
$$|\mathcal{F}^{-1}(\psi_Q)(x)|\leq C|\xi_k|^{n\alpha}(1+|\xi_Q|^\alpha |x|)^{-n-1},\qquad x\in\bR^n.$$
\end{itemize}

\end{definition}

\begin{remark}
The assumption (iii) in Definition \ref{de:bapu} is slightly modified compared to the usual definition of a BAPU in the scalar case, cf.\ \cite{MR87b:46020,Borup2006a}. This is done in order to accomodate the requirements of a matrix setup, where we will need the convolution result in Lemma \ref{le:conv} below to be applicable to the functions from any BAPU.
\end{remark}

The results in Section
\ref{sec:ms}, and the construction of a $\varphi$-transform, rely on the known fact that it is possible to construct
a smooth BAPU with additional structure. For a straightforward construction of such  a BAPU adapted to the $\alpha$-covering considered in Example \ref{ex:cov}, we may take
$\varphi\in C^\infty(\bR^n)$ to be non-negative with $\varphi(\xi)=1$ when $|\xi|\leq 1$ and $\varphi(\xi)=0$ for $|\xi|> \frac{3}{2}$, and put
\begin{equation}\label{eq:b}
\phi_k(\xi):=\varphi\left(\frac{\xi-r_k k}{c_0 r_k}\right),\qquad k\in\Zn,
\end{equation}
where $c_0$ is the constant from Example \ref{ex:cov} and
$r_k$  given in Eq.\ \eqref{eq:rk}.
 We notice that $\phi_k(\xi)=1$ on the sets  $B_k^\alpha$ from Example \ref{ex:cov} forming an $\alpha$-cover, and, moreover, it can be verified that 
 \begin{equation}\label{eq:bapu}
      \psi_k(\xi):=\frac{\phi_k(\xi)}{{\sum_{\ell\in\Zn} \phi_\ell(\xi)}}.  
\end{equation}
forms a corresponding BAPU, which will be verified in Lemma \ref{le:bab} below. 
A ''square-root'' of this  BAPU can be obtained by setting
\begin{equation}\label{eq:theta}
\theta_k^\alpha(\xi)=\frac{\phi_k(\xi)}{\sqrt{\sum_{\ell\in\Zn} \phi_\ell^2(\xi)}}    
\end{equation}
We then have
 $$\sum_{\ell\in\Zn} [\theta_k^\alpha(\xi)]^2=1,\qquad \xi\in\Rn.$$

It is perhaps less obvious that the functions $\{\mathcal{F}^{-1}(\psi_k)\}_k$ are all well-localised as required in Definition \ref{de:bapu}.(iii). We have the following result.

\begin{lemma}\label{le:bab}
The family of functions $\{\phi_k\}_k$ defined in Eq.\ \eqref{eq:b}  satisfies Definition \ref{de:bapu}.
\end{lemma}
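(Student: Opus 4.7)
The lemma reduces to verifying conditions (i)--(iii) of Definition \ref{de:bapu} for the partition of unity $\{\psi_k\}$ of Eq.\ \eqref{eq:bapu} built from the $\{\phi_k\}$. Conditions (i) and (ii) are quick: by the compact support of $\varphi$, $\supp(\psi_k) \subseteq \supp(\phi_k) \subseteq B(\xi_k, \tfrac{3}{2} c_0 r_k)$, which is contained in $B_k^\alpha$ once we take $c_1 \geq \tfrac{3}{2} c_0$ in Example \ref{ex:cov}. Since $\phi_k \equiv 1$ on $B(\xi_k, c_0 r_k)$ and the defining property of $c_0$ ensures those balls already cover $\Rn$, the denominator in \eqref{eq:bapu} satisfies $\sum_\ell \phi_\ell(\xi) \geq 1$ everywhere, so the identity $\sum_k \psi_k \equiv 1$ is immediate.

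For (iii), which is the substantive part, my plan is to exploit the dilation--modulation structure of the construction. Setting $\eta := (\xi - \xi_k)/(c_0 r_k)$ and
\[
\widetilde{\psi}_k(\eta) := \psi_k(\xi_k + c_0 r_k \eta) = \varphi(\eta)/D_k(\eta), \qquad D_k(\eta) := \sum_{\ell\in\Zn} \phi_\ell(\xi_k + c_0 r_k \eta),
\]
the function $\widetilde{\psi}_k$ is supported in the fixed ball $B(0,3/2)$, and the standard Fourier dilation--modulation identity gives
\[
\mathcal{F}^{-1}(\psi_k)(x) = (c_0 r_k)^n\, e^{i \xi_k \cdot x}\, \mathcal{F}^{-1}(\widetilde{\psi}_k)(c_0 r_k x).
\]
If I can show that $\{\widetilde{\psi}_k\}_k$ is a uniformly bounded family in $C_c^\infty(B(0,3/2))$, then classical Paley--Wiener-type estimates yield a uniform bound $|\mathcal{F}^{-1}(\widetilde{\psi}_k)(y)| \leq C(1+|y|)^{-n-1}$. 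Together with the elementary equivalence $r_k \asymp \langle \xi_k\rangle^\alpha$ (from $\xi_k = k r_k$ and $r_k = \langle k\rangle^{\alpha/(1-\alpha)}$), this delivers the bound in Definition \ref{de:bapu}(iii).

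The main obstacle is thus the uniform $C^\infty$ bound on $\widetilde{\psi}_k = \varphi/D_k$. Since $D_k \geq 1$, by the Leibniz/quotient rule it suffices to control the derivatives of $D_k$ uniformly on $B(0, 3/2)$. Two ingredients handle this. First, for each $\eta \in B(0,3/2)$ only those $\ell$ with $\supp(\phi_\ell) \cap \supp(\phi_k) \neq \emptyset$ contribute to $D_k(\eta)$, and their number is bounded independently of $k$ by the finite-overlap property \eqref{eq:overl} of the $\alpha$-covering. Second, for any such contributing $\ell$, the proximity $|\xi_k - \xi_\ell| \lesssim r_k$ combined with the explicit form $r_k = \langle k\rangle^{\alpha/(1-\alpha)}$ forces the ratio $r_\ell/r_k$ to remain in a fixed compact subinterval of $(0,\infty)$. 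The chain-rule expression
\[
\partial^\beta_\eta\bigl[\phi_\ell(\xi_k + c_0 r_k \eta)\bigr] = (r_k/r_\ell)^{|\beta|}\, (\partial^\beta \varphi)\bigl((\xi_k - \xi_\ell)/(c_0 r_\ell) + (r_k/r_\ell)\eta\bigr)
\]
is then uniformly bounded in $k$, $\ell$, and $\eta$, which (combined with the finite number of contributing terms and the lower bound on $D_k$) is all that is needed to complete the argument.
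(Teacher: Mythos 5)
Your proposal is correct and follows essentially the same approach as the paper: rescale and re-center $\psi_k$ so that the resulting family is supported in a fixed ball, obtain uniform smoothness bounds, and then transfer the resulting Paley--Wiener decay back via the Fourier dilation--modulation identity. The only difference is that the paper delegates the uniform $C^\infty$ bounds on the rescaled family to a citation of \cite[Proposition A.1]{Borup2006a}, whereas you sketch that verification directly (finite overlap, $r_\ell/r_k$ confined to a compact interval for contributing $\ell$, chain rule, and the lower bound $D_k \geq 1$); both routes are sound.
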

\begin{proof}
Property (i) and (ii) in Definition \ref{de:bapu} are straightforward to verify. We turn to property (iii). Let 
$B_k^\alpha$ be defined as in Example \ref{ex:cov}. For $\eta_k\in B_k^\alpha$  we define $g_k$ by $\widehat{g_k}:=\psi_k(r_k\cdot-\eta_k)$, where one can verify that there exist $r>0$ and constants $C_\beta$, $\beta\in(\bN\cup\{0\})^n$, independent of $k$, such that
$$|(\partial^\beta \widehat{g_k})(\xi)|\leq C_\beta \mathbf{1}_{B(0,r)}(\xi),$$
see, e.g., \cite[Proposition A.1]{Borup2006a}. It follows easily that $|g_k(x)|\leq C(1+|x|)^{-n-1}$ with $C$ independent of $k$. We also notice that
$$\mathcal{F}^{-1}(\psi_k)(x)=r_k^n e^{i\eta_k\cdot x}g_k(r_kx).$$
Hence, we   obtain the decay estimate
\begin{equation}\label{eq:dec}
|\mathcal{F}^{-1}(\psi_k)(x)|=r_k^n|e^{i\eta_k\cdot x}g_k(r_kx)|\leq Cr_k^n(1+r_k|x|)^{-n-1},
\end{equation}
with $C$ independent of $k$, and $\{\psi_k\}_k$ therefore satisfies Definition \ref{de:bapu}.
\end{proof}

Let us also recall the  well-know application of BAPUs to an easy construction of tight frames adapted to the $\alpha$-decompositions, see, e.g., \cite{Borup2007}. Put $$Q_k:=Q_k^\alpha:=R[r_k k,ar_k],\qquad k\in\Zn,$$
be an $\alpha$-cover of cubes with $r_k$ defined in \eqref{eq:rk} with $a\geq \max\{2c_0,\pi\sqrt{n}/2\}$, and where $c_0$ is the constant from Example \ref{ex:cov}. Consider the localized trigonometric system given by 
$$e_{k,\ell}(\xi):=(2\pi)^{-n/2}r_k^{-n/2}\mathbf{1}_{Q_0}(r_k^{-1}\xi-k)e^{i\frac{\pi}{a}\ell\cdot (r_k^{-1}\xi-k)},\qquad k,\ell\in\Zn,$$
with $r_k$ defined in Eq.\ \eqref{eq:rk}. 
Then we define the system $\Phi:=\{\varphi_{k,\ell}\}_{k,\ell}$ in the Fourier domain by
\begin{equation}\label{eq:TF}
\hat{\varphi}_{k,\ell}(\xi)=\theta_k^\alpha(\xi)e_{k,\ell}(\xi),
\end{equation}
where $\{\theta_k^\alpha\}_k$ is the system given by Eq.\ \eqref{eq:theta}.
One can verify that
\begin{equation}\label{eq:muk}
\varphi_{k,\ell}(x)=(2a)^{-n/2}r_k^{n/2}e^{i r_k k\cdot x}\mu_k\bigg(\frac{\pi}{a}\ell+r_k x\bigg),   
\end{equation}
with the function $\mu_k$ given by $\widehat{\mu_k}:=\psi_k(r_k \cdot+\xi_k)$. Using an argument similar to the proof of Lemma \ref{le:bab}, one may verify that $\{\mu_k\}$ are uniformly well-localised in the sense that for every $N\in\bN$ there exists $C_N<\infty$, independent of $k$, such that
\begin{equation}\label{eq:decay-mu}
|\mu_k(x)|\leq C_N\big(1+|x|\big)^{-N},\qquad x\in\Rn.
\end{equation}
This in turn implies that
\begin{equation}\label{eq:e1}
    |\varphi_{k,\ell}(x)|\leq C_N (2a)^{-n/2}r_k^{n/2}\big(1+r_k\big|x-x_{k,\ell}\big|\big)^{-N},\qquad x\in\Rn,
\end{equation}
with 
\begin{equation}\label{eq:xk}
    x_{k,\ell}:=\frac{\pi}{a}r_k^{-1}\ell,\qquad k,\ell\in\bZ^n.
\end{equation}
In the frequency domain, we have $\text{supp}(\hat{\varphi}_{k,\ell})\subset B(\xi_k,cr_k)$ for some $c>0$ independent of $k$, which implies that there exist constants $K_N$ such that the localisation
\begin{equation}\label{eq:e2}
   |\hat{\varphi}_{k,\ell}(\xi)|\leq K_N 
   r_k^{-\frac{n}{2}}(1+r_k^{-1}|\xi_{k}-\xi|)^{-N}, \qquad \xi\in\bR^n,
\end{equation}
holds true for $N\in\bN$. It can easily be verified, see \cite{Borup2007}, that $\Phi:=\{\varphi_{k,\ell}\}_{k,\ell}$ forms a tight frame for $L^2(\Rn)$, i.e., we have the identity
\begin{equation}\label{eq:tf}
f=\sum_{k,\ell} \langle f,\varphi_{k,\ell}\rangle \varphi_{k,\ell},\qquad f\in L^2(\Rn),
\end{equation}
where the sum converges unconditionally in $L^2(\Rn)$.

\subsection{Matrix-weighted $L^p$-spaces and Muckenhoupt weights}\label{sec:22}
As mentioned in the introduction, we will need weighted vector-valued $L^p$-spaces for the construction of smoothness spaces considered below. For $1\leq p<\infty$ and $W\colon
\cD\to\C^{N\times N}$ a matrix-valued function, which is measurable
and positive definite a.e., let $L^p(W)$ denote the family of
measurable functions $\vf\colon \cD\to\C^N$ satisfying Eq.\ \eqref{eq:lp} factorised 
over $$\{\vf\colon \cD\to\C^N;\norm{\vf}_{L^p(W)}=0\}.$$ It can be verified that, for $1<p<\infty$, the dual space to $L^p(W)$ is $L^{p'}(W^{-p'/p})$, where $p'$ is the dual exponent to $p$, i.e., $1/p+1/p'=1$, see \cite{Vol97a} for further details.

 An $N\times N$ matrix weight is a locally integrable and positive definite a.e.\ matrix function $W\colon \cD\rightarrow \C^{N\times N}$. It turns out that one needs certain additional properties of the matrix weight in order to prove, e.g., completeness of the smoothness spaces introduced in the sequel.  The matrix Muckenhoupt condition will play an important role.  We say that a matrix weight $W$ satisfied the matrix $A_p$ condition, $1<p<\infty$, provided
\begin{equation}\label{eq:Roudenko}
 [W]_{{\mathbf{A}_p(\cD)}}:=\sup_{Q\in \cQ} \int_Q\left( \int_Q \big\|W^{1/p}(x)W^{-1/p}(t)\big\|^{p'} \frac{dt}{|Q|}\right)^{p/p'} \frac{dx}{|Q|}<\infty,
\end{equation}
where $\cQ$ is the collection of all cubes in $\cD$.
The norm $\|\cdot\|$ appearing in the integral is any matrix norm on the $N\times N$ matrices. In case \eqref{eq:Roudenko} is satisfied, we write $W\in \mathbf{A}_p(\cD)$.
\begin{remark}
The matrix $A_p$-conditions were introduced and studied in \cite{NazTre96a,TreVol97a,Vol97a} using the notion of dual norms. The condition given in \eqref{eq:Roudenko} was shown to be equivalent to the original condition by Roudenko in \cite{Rou03a}, and \eqref{eq:Roudenko} has the advantage of often being more ``operational''. 
\end{remark}

Similar to the scalar case, special care has to be taken to define a matrix Muckenhoupt condition in the endpoint case $p=1$.
Following Frazier and Roudenko \cite{Frazier:2004ub}, we define the matrix $\mathbf{A}_1(\cD)$-class as follows.
\begin{definition} Let $W:\Omega\rightarrow \bC^{N\times N}$ be a matrix weight. We say that $W\in \mathbf{A}_1(\cD)$ provided that
\begin{equation}\label{eq:mA1}
	\|W\|_{\mathbf{A}_1(\cD)}:=\sup_{Q\in\cQ} \text{esssup}_{y\in Q} \frac{1}{|Q|}\int_Q \|W(t)W^{-1}(y)\|\,dt<+\infty.
\end{equation}

\end{definition}

\begin{remark}
    It can be verified that in the scalar case $N=1$, the conditions given by \eqref{eq:Roudenko} and \eqref{eq:mA1}, respectively,  reduce to the corresponding well-known scalar $A_p$ conditions, see \cite{Frazier:2004ub,Rou03a} for further details.
\end{remark}

\subsection{Vector valued modulation spaces}

 Let $m:\bR^d\rightarrow\bC$ be a bounded measurable function (a multiplier). We denote by $m(D)f:=\mathcal{F}^{-1}(m\hat{f})$, the corresponding Fourier multiplier operator, i.e., the convolution of $\mathcal{F}^{-1}(m)$ with $f$.

We denote by $\cS=\cS(\bR^n)$ the Schwartz space of rapidly decreasing, infinitely differentiable functions on $\bR^n$. A function $\varphi\in\cC^{\infty}$ belongs to $\cS(\bR^n)$ when, for every $k\in\bN_0$ with $\bN_0:=\bN\cup\{0\}$, the semi-norms
\begin{equation}\label{Snorm1}
p_{k}(\varphi):=\max_{\alpha\in \bN_0^n: |\alpha|\leq k}\sup_{x\in\bR^n}(1+|x|)^{k}|\partial ^\alpha \varphi(x)|
\end{equation}
are all finite, where we put $|\alpha|:=\sum_{j=1}^n \alpha_j$ for $\alpha\in \bN_0^n$. As is well-known, the semi-norms $\{p_k\}$ turn $\cS$ into a Fr\'{e}chet space.
The dual space $\cS'=\cS'(\bR^d)$ of $\cS$ is the space of tempered distributions. It will also be useful to consider the corresponding 
concepts in a vector setup, where we consider the direct sum  Fr\'{e}chet space $\bigoplus_{j=1}^N\dS$, with  
dual space $\bigoplus_{j=1}^N\mathcal{S}'(\bR^n)$ consisting of $N$-tuples of  tempered distributions.

We are now ready to give the definition of the vector-valued weighted $\alpha$-modulation spaces.
\begin{definition}
Let $W:\bR^n\rightarrow \bC^{N\times N}$ be a matrix-weight, and let $\qp=\{Q\}$ be an $\alpha$-covering with associated BAPU 
   $\{\psi_Q\}_{Q\in\cQ}$ of the type given in Definition \ref{de:bapu}. Let $\xi_Q\in Q$, $Q\in\cQ$.
For $\alpha\in [0,1]$, $s\in\bR$, $1\leq p<\infty$, and $0<q\leq \infty$, we let $M^{\alpha,s}_{p,q}(W)$ denote the collection of all vector-valued distributions 
$\vf=(f_1,\ldots,f_N)^T\in\bigoplus_{j=1}^N\mathcal{S}'(\bR^n) $, such that
$$\|\vf\|_{M^{\alpha,s}_{p,q}(W)}:=\bigg\|\bigg\{|Q|^{s/n}\|\psi_Q(D)\vf\|_{L^p(W)}\bigg\}_Q\bigg\|_{\ell_q}<\infty,$$
with $\psi_Q(D)\vf:=(\psi_Q(D)f_1,\ldots,\psi_Q(D)f_N)^T$ acting coordinate-wise. For $q=\infty$, the $\ell^q$-norm is replaced by the supremum over $Q$.
\end{definition}
Based on the corresponding definition of (un-weighted) scalar $\alpha$-modulation, one may hope that the family $M^{\alpha,s}_{p,q}(W)$ is in fact a (quasi-)Banach space, at least for  "nice" matrix-weights $W$. This turns out to hold for matrix weights in $\mathbf{A}_p(\cD)$, where we have the following result, where it is also shown that up to equivalence of norms, $M^{\alpha,s}_{p,q}(W)$ is independent of the choice of BAPU whenever $W\in\mathbf{A}_p(\cD)$.
\begin{proposition}\label{prop:complete}
Let $1\leq p<\infty$ and $W\in \mathbf{A}_p(\cD)$. For $0<q\leq \infty$ and $s\in\bR$, 
\begin{itemize}
\item[(a)] We have continuous embeddings
$$\bigoplus_{j=1}^N\dS\hookrightarrow M^{\alpha,s}_{p,q}(W)\hookrightarrow \bigoplus_{j=1}^N\mathcal{S}'(\bR^n).$$
    \item[(b)] The space $M^{\alpha,s}_{p,q}(W)$ is complete, i.e., $M^{\alpha,s}_{p,q}(W)$ is a (quasi-)Banach space.
    \item[(c)] The space $M^{\alpha,s}_{p,q}(W)$ is independent of the choice of BAPU (up to equivalence of norms).
\end{itemize}

\end{proposition}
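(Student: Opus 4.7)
The key analytic ingredient is a matrix-weighted convolution bound for band-limited functions (the lemma~\ref{le:conv} alluded to in the remark after Definition~\ref{de:bapu}): if $g\in L^p(W)$ has Fourier transform supported in a fixed modest enlargement of some $Q\in\cQ$ and $\eta_Q$ satisfies the quantitative $\cF^{-1}$-decay in Definition~\ref{de:bapu}(iii), then $\|\eta_Q(D)g\|_{L^p(W)}\lesssim \|g\|_{L^p(W)}$ with constant independent of $Q$. I will treat this as a black box. Beyond it I only need the standard matrix-$\mathbf{A}_p$ duality $(L^p(W))^*\simeq L^{p'}(W^{-p'/p})$ with $W^{-p'/p}\in\mathbf{A}_{p'}$, and local integrability of $\|W^{1/p}\|^p$ and $\|W^{-1/p}\|^{p'}$ inherited from the matrix Muckenhoupt condition.

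\textbf{Part (a).} For the upper embedding, Schwartz $\vf$ yields pointwise bounds $|\psi_Q(D)f_j(x)|\le C_{M,N}(\vf)\langle\xi_Q\rangle^{-M}(1+|x|)^{-N}$ for arbitrary $M,N$ via integration by parts in the Fourier representation, using rapid decay of $\widehat{f_j}$ together with Definition~\ref{de:bapu}(iii); multiplying by $\|W^{1/p}\|$ and integrating in $x$ gives rapid decay of $\|\psi_Q(D)\vf\|_{L^p(W)}$ in $\xi_Q$, which dominates $|Q|^{s/n}\asymp\langle\xi_Q\rangle^{\alpha s}$, so the $\ell^q$ sum converges and is controlled by finitely many Schwartz seminorms of $\vf$. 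For the lower embedding, each $\psi_Q(D)\vf$ lies in $L^p(W)\subset L^1_{loc}$ by H\"older against $\|W^{-1/p}\|^{p'}\in L^1_{loc}$; for $\varphi\in\bigoplus_{j=1}^N\dS$ write $\langle\vf,\varphi\rangle=\sum_Q\langle\psi_Q(D)\vf,\tilde\psi_Q(D)\varphi\rangle$ with $\tilde\psi_Q:=\sum_{Q'\in A_Q}\psi_{Q'}\equiv 1$ on $\supp\psi_Q$, apply H\"older in $L^p(W)\times L^{p'}(W^{-p'/p})$, and use the upper-embedding estimate with $W^{-p'/p}\in\mathbf{A}_{p'}$ in place of $W$ to bound the whole sum by $C\|\vf\|_{M^{\alpha,s}_{p,q}(W)}\,p_k(\varphi)$ for some $k$.

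\textbf{Parts (b) and (c).} Completeness is the standard Banach argument: for Cauchy $\{\vf_m\}$, each sequence $\{\psi_Q(D)\vf_m\}_m$ is Cauchy in $L^p(W)$ and converges to some $\vg_Q$; by (a) one has $\vf_m\to\vf$ in $\bigoplus_{j=1}^N\mathcal{S}'(\bR^n)$, and continuity of $\psi_Q(D)$ on $\mathcal{S}'$ identifies $\vg_Q=\psi_Q(D)\vf$; Fatou applied to the (quasi-)norm $\ell^q$ then places $\vf\in M^{\alpha,s}_{p,q}(W)$ and gives $\vf_m\to\vf$ in norm. For BAPU independence, given a second BAPU $\{\tilde\psi_P\}_{P\in\cP}$, decompose $\tilde\psi_P(D)\vf=\sum_{Q\in A_P}\tilde\psi_P(D)\psi_Q(D)\vf$ using the finite-overlap bound \eqref{eq:overl}, apply the black-box estimate to each Fourier-localised summand, and combine with $|P|^{s/n}\asymp|Q|^{s/n}$ for $Q\in A_P$ and bounded overlap in both directions to obtain $\|\vf\|_{\tilde\cP}\lesssim\|\vf\|_{\cQ}$; the reverse inequality is symmetric. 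The main obstacle is the black-box convolution lemma itself: establishing the uniform $L^p(W)$-boundedness of the BAPU multipliers on Fourier-localised subspaces, where the quantitative kernel decay of Definition~\ref{de:bapu}(iii) must be combined with the matrix Muckenhoupt theory (e.g.\ through reducing operators) to reduce to a scalar weighted convolution estimate.
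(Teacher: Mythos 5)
Your argument for part (c) is essentially the paper's, and your part (a) follows the same template (decay in $x$ and $\xi_Q$ for the upper embedding, then a weighted duality argument for the lower one). Part (b) is where you genuinely diverge, and advantageously: the paper defers completeness to the appendix because it proves it \emph{through} the tight-frame coefficient operators (Propositions~\ref{prop:coef}, \ref{prop:recon}) and reducing operators $\{A_Q\}$; you instead argue directly with the Fourier-localised pieces $\psi_Q(D)\vf_m$, using completeness of $L^p(W)$ and of $\bigoplus_{j=1}^N\mathcal{S}'(\bR^n)$ plus continuity of $\psi_Q(D)$ on $\mathcal{S}'$ to pin down the limit, then Fatou for the $\ell^q$ norm. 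This is more economical and avoids the forward dependence on Section~\ref{sec:ms}, provided you note explicitly why the $L^p(W)$-limit and the $\mathcal{S}'$-limit agree as distributions (which holds since $L^p(W)\subset L^1_{\mathrm{loc}}$ by H\"older against $\|W^{-1/p}\|^{p'}\in L^1_{\mathrm{loc}}$).

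There is, however, an inaccuracy in your stated ingredient list for part (a). You claim that beyond the convolution lemma you need only ``local integrability of $\|W^{1/p}\|^p$ and $\|W^{-1/p}\|^{p'}$''; that is not sufficient. The step where you ``multiply by $\|W^{1/p}\|$ and integrate in $x$'' requires the \emph{global} bound $\int_{\bR^n}\|W^{1/p}(x)\|^p\langle x\rangle^{-d}\,dx<\infty$ for suitably large $d$: without it the pointwise decay $(1+|x|)^{-N}$ can be overwhelmed at infinity. This is the moderate-growth property of the scalar $A_p$ weight $w(x)=\|W^{1/p}(x)\|^p$, which the paper invokes explicitly in \eqref{eq:gr}; it does follow from $W\in\mathbf{A}_p$, but it is a genuinely global fact and should be named rather than folded into ``local integrability.'' Two further small omissions: the lower embedding as you wrote it relies on the duality $(L^p(W))^*\simeq L^{p'}(W^{-p'/p})$, which holds only for $1<p<\infty$, so $p=1$ needs the separate pairing the paper uses; and for $0<q<1$ the $\ell^q\times\ell^{q'}$ H\"older step must be replaced by $q$-subadditivity, again as the paper does.
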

We will postpone the proof of (a) and (b) until Appendix \ref{sec:complete} as we first need to develop a number of technical tools providing estimates to handle certain band-limited vector-valued functions.
To prove (c), we will need the following convolution lemma proven by Frazier and Roudenko \cite[Lemma 4.4]{MR4263690}. 
The reader may also consult Goldberg's result on general singular integrals \cite{Gol03a}  for the range $1<p<\infty$.

\begin{lemma}\label{le:conv}
Let $1\leq  p<\infty$ and  $W\in {\mathbf{A}_p}$. Suppose that $g:\bR^n\rightarrow \bC$ with $|g(x)|\leq C'(1+|x|)^{-n-1}$ for some constant $C'$, and let $g_\delta(x)=\delta^ng(\delta x)$ for $\delta>0$. If $\vf\in L^p(W)$ then $g_\delta*\vf\in L^p(W)$ and
$$\|g_\delta*\vf\|_{L^p(W)}\leq C \|\vf\|_{L^p(W)},$$
for some constant $C:=C(W,C',p)$ independent of $\delta>0$.
\end{lemma}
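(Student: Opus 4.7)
The plan is to reduce the claim to boundedness of a matrix-weighted Hardy--Littlewood maximal operator via the $\mathbf{A}_p$ machinery. The central algebraic trick is the factorisation
\begin{equation*}
W^{1/p}(x)\vf(y) = \bigl(W^{1/p}(x)W^{-1/p}(y)\bigr)\cdot W^{1/p}(y)\vf(y),
\end{equation*}
which moves the outer weight $W^{1/p}(x)$ inside the convolution integral at the cost of the cross-matrix norm $\|W^{1/p}(x)W^{-1/p}(y)\|$ that the matrix $\mathbf{A}_p$ condition is designed to control. Writing $F(y):=|W^{1/p}(y)\vf(y)|$, so that $\|F\|_{L^p(\bR^n)}=\|\vf\|_{L^p(W)}$, the task becomes a scalar bound on $\|W^{1/p}(x)(g_\delta*\vf)(x)\|_{L^p_x}$ in terms of $\|F\|_{L^p}$.

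First I would exploit the decay $|g_\delta(x-y)|\lesssim\delta^n(1+\delta|x-y|)^{-n-1}$ by decomposing dyadically. With $B_k(x):=B(x,2^k/\delta)$, on the shell $B_k(x)\setminus B_{k-1}(x)$ one has $|g_\delta(x-y)|\lesssim \delta^n 2^{-k(n+1)}$ and $|B_k(x)|\asymp 2^{kn}\delta^{-n}$, yielding the pointwise bound
\begin{equation*}
\bigl|W^{1/p}(x)(g_\delta*\vf)(x)\bigr|\lesssim \sum_{k\ge 0}2^{-k}\cdot\frac{1}{|B_k(x)|}\int_{B_k(x)}\|W^{1/p}(x)W^{-1/p}(y)\|\,F(y)\,dy.
\end{equation*}
Crucially, $\delta$ has dropped out except through the scale of the dyadic balls, and the weighted $\mathbf{A}_p$ averaging is scale invariant.

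For $1<p<\infty$, the $k$-th term on the right is (a variant of) the matrix-weighted maximal operator $M_W^{\#}\vf(x)$ associated with $W$, which is known to be bounded on $L^p(\bR^n)$ for $W\in\mathbf{A}_p$ (see, e.g., Goldberg \cite{Gol03a}). The standard proof of that boundedness relies on reducing operators $A_B$, i.e., positive-definite $N\times N$ matrices with $|A_B\ve|^p\asymp|B|^{-1}\int_B|W^{1/p}(y)\ve|^p\,dy$ (and a dual analogue $A_{B,*}$ for $W^{-1/p}$), under which \eqref{eq:Roudenko} is equivalent to the uniform bound $\sup_B\|A_BA_{B,*}\|\lesssim[W]_{\mathbf{A}_p}^{1/p}$. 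Using H\"older with exponents $p'$ and $p$ on each $B_k(x)$ and inserting $A_{B_k}^{-1}A_{B_k}$ into the cross-norm collapses each annular term to a scalar maximal-function-type expression; summing the geometric series $\sum_k 2^{-k}<\infty$ then yields the pointwise bound $|W^{1/p}(x)(g_\delta*\vf)(x)|\lesssim M_W^{\#}\vf(x)$ uniformly in $\delta$, and the $L^p$-boundedness of $M_W^{\#}$ closes the argument.

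For $p=1$ the argument is more direct: by Fubini,
\begin{equation*}
\|W(g_\delta*\vf)\|_{L^1}\leq \int F(y)\int|g_\delta(x-y)|\,\|W(x)W^{-1}(y)\|\,dx\,dy,
\end{equation*}
and the inner integral is bounded using the same dyadic decomposition together with the $\mathbf{A}_1$ condition \eqref{eq:mA1}, which guarantees $\frac{1}{|B_k(y)|}\int_{B_k(y)}\|W(x)W^{-1}(y)\|\,dx\le\|W\|_{\mathbf{A}_1}$ for a.e.\ $y$; the geometric factor $\sum_k 2^{-k}$ gives a constant independent of $\delta$, and integrating against $F$ finishes the case. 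The main obstacle is the $L^p$-boundedness of the matrix maximal operator $M_W^{\#}$ for $p>1$; once that deep input is invoked, the convolution estimate is a routine dyadic reduction.
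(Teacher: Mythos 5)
The paper does not actually prove Lemma~\ref{le:conv}: it is cited verbatim from Frazier--Roudenko \cite[Lemma 4.4]{MR4263690}, with the Remark immediately following merely noting that the discrete case $\delta = 2^j$ extends to all $\delta>0$. So strictly speaking there is no ``paper's own proof'' to compare against. Your sketch supplies an actual argument, and its skeleton --- dyadic annular decomposition of the kernel, pointwise domination by a matrix-weighted maximal function, then $L^p$-boundedness of that maximal function for $W\in\mathbf{A}_p$ --- is the standard route for results of this type and is morally what Frazier--Roudenko and Goldberg use.

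Two points deserve care, though they do not sink the argument. First, the quantity you end up needing to control is $\sup_{B\ni x}\frac{1}{|B|}\int_B\|W^{1/p}(x)W^{-1/p}(y)\|F(y)\,dy$, which involves the full operator norm of the cross-matrix rather than its action on the specific vector $W^{1/p}(y)\vf(y)$. This is genuinely stronger than the maximal function $\sup_{B\ni x}\frac{1}{|B|}\int_B|W^{1/p}(x)\vf(y)|\,dy$ that the convolution estimate actually requires, and it is not literally Goldberg's stated theorem; one needs an extra (routine) step, e.g.\ a finite $1/2$-net $\{\ve_j\}$ on the unit sphere together with $\|A\|\le 2\max_j|A\ve_j|$ to reduce to finitely many instances of the Christ--Goldberg maximal function applied to $\vg_j(y):=F(y)W^{-1/p}(y)\ve_j$, each with $\|\vg_j\|_{L^p(W)}=\|F\|_{L^p}$. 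Since you are only upper-bounding, the simpler pointwise inequality $|W^{1/p}(x)(g_\delta*\vf)(x)|\lesssim \sup_{B\ni x}\frac{1}{|B|}\int_B|W^{1/p}(x)\vf(y)|\,dy$ is available and sidesteps this entirely.

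Second, the paragraph that inserts $A_{B_k}^{-1}A_{B_k}$ and applies H\"older ``to collapse each annular term to a scalar maximal-function-type expression'' does not work as a pointwise estimate: after splitting $\|W^{1/p}(x)W^{-1/p}(y)\|\le\|W^{1/p}(x)A_{B_k}^{-1}\|\cdot\|A_{B_k}W^{-1/p}(y)\|$, the $\mathbf{A}_p$ condition controls the $B_k$-average of the second factor, but the first factor $\|W^{1/p}(x)A_{B_k}^{-1}\|$ is \emph{not} uniformly bounded in $x$; only its $L^p$-average over $B_k$ is. As written, summing the geometric series would leave an uncontrolled weight-dependent factor. This does not invalidate the proof because you are invoking Goldberg's boundedness theorem as a black box (the pointwise bound $|W^{1/p}(x)(g_\delta*\vf)(x)|\lesssim M_W^{\#}\vf(x)$ already follows directly from the annular decomposition without H\"older or reducing operators), but the paragraph should be presented as a gesture at the proof of the cited theorem, not as a self-contained deduction. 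The $p=1$ case via Fubini and \eqref{eq:mA1} is clean and complete.
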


\begin{remark}
The proof in \cite{MR4263690} covers the discrete cases $\delta=2^j$, $j\in\bZ$,  but the reader can easily verify that the same proof  extends to cover any $\delta>0$.
\end{remark}

\begin{remark}\label{re:mu}
Suppose   $W\in {\mathbf{A}_p}$ for some $1\leq  p<\infty$. Consider a BAPU $\{\psi_Q\}$  
satisfying Definition \ref{de:bapu}   associated with an $\alpha$-covering $\cQ$ of $\bR^n$. For $\xi_Q\in Q\in\cQ$, we may define a well-localised function $g$ by $\hat{g}:=\psi_Q(|\xi_Q|^{\alpha }\cdot-\xi_Q)$. 
Similar to the estimate \eqref{eq:dec}, one may  obtain the localisation
$$|\mathcal{F}^{-1}(\psi_Q)(x)|=|\xi_Q|^{n \alpha }|e^{i\xi_Q\cdot x}g(|\xi_Q|^{\alpha }x)|\leq C|\xi_Q|^{n\alpha}(1+|\xi_Q|^{\alpha }|x|)^{-n-1},$$
with $C$ independent of $Q$. Hence, by Lemma \ref{le:conv}, we finally arrive at the uniform bound
$$\|\psi_Q(D)\vf\|_{L^p(W)}=\|\mathcal{F}^{-1}(\psi_Q)*\vf\|_{L^p(W)}\leq 
C \|\vf\|_{L^p(W)},$$
with $C:=C(W,p)$ independent of $Q$.
\end{remark}

We can now prove Proposition \ref{prop:complete}.(c).
\begin{proof}[{Proof of Proposition \ref{prop:complete}.(c)}]
Let $\cQ=\{Q\}$ and $\cP=\{P\}$ be two $\alpha$-coverings with associated BAPUs
$\Psi=\{\psi_Q\}_{Q\in\cQ}$ and $\Gamma=\{\gamma_P\}_{P\in \cP}$, respectively.
We first notice, using uniformly bounded height of any $\alpha$-covering, that for $Q\in \cQ$,
\begin{equation}\label{eq:pdo}
\psi_Q(D)\vf=\psi_Q(D)\sum_{P\in A_Q}\gamma_P(D)\vf,
\end{equation}
with $A_Q=\{P\in \cP:P\cap Q\not=\emptyset\}$, where we recall that $\#A_Q$ is bounded by a constant $n_0$ independent of $Q$, see Eq.\ \eqref{eq:overl}. Hence, by Lemma \ref{le:conv} and Remark \ref{re:mu},
$$\|\psi_Q(D)\vf\|_{L^p(W)}\leq C\sum_{P\in A_Q}\|\gamma_P(D)\vf\|_{L^p(W)}.$$
By Definition \ref{def:mad}.(iii), for $Q\in\cQ$ and $P\in\cP$ with  $P\cap Q\not=\emptyset$, we have $|Q|\asymp \langle \xi_0 \rangle^{\alpha n}\asymp |P|$ uniformly for any $\xi_0\in Q\cap P$. It follows from this observation that
$$|Q|^{s/n}\|\psi_Q(D)\vf\|_{L^p(W)}\leq C\sum_{P\in A_Q}|P|^{s/n}\|\gamma_P(D)\vf\|_{L^p(W)}.$$
Similarly, we obtain, for $P\in\cP$,
$$|P|^{s/n}\|\gamma_P(D)\vf\|_{L^p(W)}\leq C\sum_{Q\in B_P}|Q|^{s/n}\|\psi_Q(D)\vf\|_{L^p(W)},$$
with $B_P=\{Q\in \cQ:Q\cap P\not=\emptyset\}$.
Using the uniform bounds on the cardinality of the sets $A_Q$ and $B_P$, it is  then straightforward to verify that
\begin{align*}
\|\vf\|_{M^{\alpha,s}_{p,q}(W)}&=\bigg\|\bigg\{|Q|^{s/n}\|\psi_Q(D)\vf\|_{L^p(W)}\bigg\}_Q\bigg\|_{\ell_q}\\&\asymp 
\bigg\|\bigg\{|P|^{s/n}\|\gamma_P(D)\vf\|_{L^p(W)}\bigg\}_P\bigg\|_{\ell_q},\qquad \vf\in 
M^{\alpha,s}_{p,q}(W).
\end{align*}

\end{proof}
The completion of the proof of Proposition \ref{prop:complete} can be found in Appendix \ref{sec:complete}.
To simplify the notation below, we will call on the equivalence provided by Proposition \ref{prop:complete}.(iii) and for $\alpha\in [0,1)$ always use the ``canonical'' BAPU $\{\psi_k\}_{k\in \Zn}$ given in \eqref{eq:bapu} associated with the $\alpha$-covering of Example \ref{ex:cov}. With this choice, for  
 $s\in\bR$, $1\leq p<\infty$, and $0<q\leq \infty$, we have 
\begin{equation}\label{eq:canonical}
\|\vf\|_{M^{\alpha,s}_{p,q}(W)}=\bigg\|\bigg\{r_k^s\|\psi_k(D)\vf\|_{L^p(W)}\bigg\}_k\bigg\|_{\ell_q},\qquad \vf\in 
M^{\alpha,s}_{p,q}(W).
\end{equation}

\begin{remark}
One may use the same reasoning as used for the proof of   Proposition \ref{prop:complete}.(c) to verify that the ``square root'' system $\{\theta_k^\alpha\}$ defined in \eqref{eq:theta}  also satisfies, for  
 $s\in\bR$, $1\leq p<\infty$, and $0<q\leq \infty$,
\begin{equation}\label{eq:canonicaltheta}
\|\vf\|_{M^{\alpha,s}_{p,q}(W)}\asymp\bigg\|\bigg\{r_k^s\|\theta_k^\alpha(D)\vf\|_{L^p(W)}\bigg\}_k\bigg\|_{\ell_q},\qquad \vf\in 
M^{\alpha,s}_{p,q}(W).
\end{equation}
The details are left for the reader.
\end{remark}
\section{Discrete vector valued modulation spaces and norm characterzations}\label{sec:ms}
Often the notion of smoothness can be linked to sparseness for suitable functions expansions.   In this section we define discrete vector-valued weighted $\alpha$-modulation space together with a simple construction of adapted tight frames that will support a $\phi$-transform in the spirit of the classical construction by Frazier and Jawerth \cite{Frazier1985,Frazier1990}.

Let $k,\ell\in \Zn$, and let   $r_k$ be as in Eq.\ \eqref{eq:rk}. Using the notation introduced in \eqref{eq:cube}, we define for a constant $a>\max\{2c_1,\pi\sqrt{n}/2\}$, with $c_1$ the constant from Example \ref{ex:cov}, the  sets
\begin{equation}\label{eq:qk}
Q(k,\ell):=R\bigg[\frac{\pi}{a}r_k^{-1}\ell,\frac{\pi}{a}r_k^{-1}\bigg]=
\frac{\pi}{a}r_k^{-1}\ell+\bigg[0,\frac{\pi}{a}r_k^{-1}\bigg)^n.
\end{equation}
Clearly, for fixed $k$, $\cQ_k:=\cup_\ell Q(k,\ell)$ forms a partition of $\bR^n$ with $|Q(k,\ell)|=\left(\frac{\pi}{a}\right)^nr_k^{-n}$.
The sets will play the role of a suitable substitute for the dyadic cubes, so we denote 
$\cQ=\cup_k \cQ_k$.

Let $\mathbf{1}_A$ denote the characteristic function of a measurable set $A$. We have the following definition.

\begin{definition}
Let $W:\bR^n\rightarrow \bC^{N\times N}$ be a matrix-weight, and suppose
$\alpha\in [0,1]$, $s\in\bR$, $1\leq p<\infty$, and $0<q\leq \infty$. We let 
let $\cQ=\{Q(k,\ell)\}_{k,\ell}$ be the collection of sets defined in \eqref{eq:qk}.
We let $m^{\alpha,s}_{p,q}(W)$ denote the collection of all vector-valued sequences 
$\vs=\{\vs_Q\}_{Q\in\cQ}$, where $\vs_Q=\big(s_Q^{(1)}, \ldots,s_Q^{(N)}\big)^T$, enumerated by the sets in $\cQ$, such that
\begin{align*}
\|\{\vs_Q\}_{Q}\|_{m^{\alpha,s}_{p,q}(W)}&:=\bigg\|\bigg\{r_k^s\bigg\|\sum_{\ell\in\bZ^n} |Q(k,\ell)|^{-\frac{1}{2}}\vs_{Q(k,\ell)}\mathbf{1}_{Q(k,\ell)}\bigg\|_{L^p(W)}\bigg\}_k\bigg\|_{\ell_q}\\
&=\bigg(\sum_{k\in\bZ^n}\bigg\| r_k^s\sum_{\ell\in\bZ^n} |Q(k,\ell)|^{-\frac{1}{2}}\big\|W^{1/p}(t)\vs_{Q(k,\ell)}\big\|\mathbf{1}_{Q(k,\ell)}(t)\bigg\|_{L^p(dt)}^q\bigg)^{1/q}.
\end{align*}
 For $q=\infty$, the $\ell^q$-norm is replaced by the supremum over $k$.
\end{definition}

In order to make a connection between the discrete spaces $m^{\alpha,s}_{p,q}(W)$ and the continuous setting, we will rely on a number of weighted sampling results for band-limited vector-valued functions. The following Lemma provides a weighted $L^p$ sampling result for band-limited vector-valued functions that have their frequency support contained in sets that are not ''centered''. Similar to the scalar case, it is possible to center the spectrum by translation, which corresponds to applying a modulation to  the function, which  does not affect the $L^p$-properties considered in the Lemma.   
We let $$\Omega_k:=\{\vf:\bR^n\rightarrow\bC^N|\text{supp}(\hat{f_i})\subseteq B_k^\alpha, i=1,\ldots,N\},$$
where $B_k^\alpha$ are the sets considered in Example \ref{ex:cov}. We have the following lemma, which provides an adapted version of the sampling result \cite[Lemma 6.3]{Rou03a} by Roudenko.
\begin{lemma}\label{le:samp}
Let $1\leq p<\infty$,  $W\in \mathbf{A}_p$, and suppose $\vg\in \Omega_k$. Then there exists a constant $c_{p,n}$, independent of $\vg$, such that
$$\sum_{\ell\in \bZ^n}\int_{Q(k,\ell)}\bigg|W^{1/p}(x)\vg\bigg(\frac{\pi}{a}r_k^{-1}\ell\bigg)\bigg|^p\,dx\leq c_{p,n}\|\vg\|_{L^p(W)}^p.$$
\end{lemma}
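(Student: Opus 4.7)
The plan is to adapt Roudenko's matrix-weighted sampling inequality \cite[Lemma 6.3]{Rou03a} from the dyadic scale to the scale $r_k^{-1}$ dictated by the $\alpha$-covering. The starting point is an exact reproducing identity: since $\vg\in\Omega_k$ and the bump $\phi_k$ from Eq.\ \eqref{eq:b} is identically one on $B_k^\alpha$, we have $\vg = \mathcal{F}^{-1}(\phi_k)\ast \vg$ componentwise. A dilation--modulation computation identical to the proof of Lemma \ref{le:bab} yields, for every $M\in\bN$, a constant $C_M<\infty$ independent of $k$ with
$$|\mathcal{F}^{-1}(\phi_k)(x)|\leq C_M\, r_k^n(1+r_k|x|)^{-M},\qquad x\in\bR^n.$$

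Next, I would replace the local $L^p(W)$-quantities by constant positive-definite reducing matrices $A_{k,\ell}$ associated with each cube $Q(k,\ell)$ and satisfying
$$\|A_{k,\ell}\vec c\|^p\asymp \frac{1}{|Q(k,\ell)|}\int_{Q(k,\ell)}|W^{1/p}(x)\vec c|^p\,dx,\qquad \vec c\in\bC^N,$$
whose existence for $W\in\mathbf{A}_p$ is standard, with constants depending only on $p$, $n$, $N$, and $[W]_{\mathbf{A}_p}$. Writing $x_{k,\ell}:=\tfrac{\pi}{a} r_k^{-1}\ell$, the left-hand side of the claimed inequality becomes comparable (uniformly in $\vg$) to $\sum_{\ell}|Q(k,\ell)|\,\|A_{k,\ell}\vg(x_{k,\ell})\|^p$. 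Since $A_{k,\ell}$ is constant I can pull it inside the convolution, apply the decay estimate, and split the integral over the cubes $Q(k,m)$ to obtain
$$\|A_{k,\ell}\vg(x_{k,\ell})\|\leq C_M\sum_{m\in\bZ^n}\frac{r_k^n}{(1+|\ell-m|)^{M}}\int_{Q(k,m)}\|A_{k,\ell}\vg(y)\|\,dy.$$

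The decisive step is the exchange $A_{k,\ell}\to A_{k,m}$, via the polynomial comparison $\|A_{k,\ell}A_{k,m}^{-1}\|\leq C(1+|\ell-m|)^{\beta}$, $\beta=\beta(W,p,n)$, obtained by iterating the matrix $\mathbf{A}_p$ doubling property along a chain of cubes of the common sidelength $\tfrac{\pi}{a}r_k^{-1}$. With $M$ chosen so that $M-\beta>n$, H\"older's inequality in $m$ (to pass to the $p$-th power) followed by discrete Young's inequality in $\ell$ gives
$$\sum_{\ell}|Q(k,\ell)|\,\|A_{k,\ell}\vg(x_{k,\ell})\|^p \leq C\sum_{m}\int_{Q(k,m)}\|A_{k,m}\vg(y)\|^p\,dy.$$
The right-hand side is finally dominated by $C\|\vg\|_{L^p(W)}^p$ via a matrix-weighted averaging estimate (also in \cite{Rou03a}) that controls $\int_Q \|A_Q\vf(y)\|^p\,dy$ by $\int_Q |W^{1/p}(y)\vf(y)|^p\,dy$ up to a constant depending only on $[W]_{\mathbf{A}_p}$, summed over $m$.

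The main technical obstacle is the uniformity, in $k$, of both the polynomial reducing-matrix comparison and the aforementioned averaging estimate across all cubes at the scale $r_k^{-1}$. Since $[W]_{\mathbf{A}_p}$ is by definition a supremum over all cubes in $\bR^n$, the dyadic-scale estimates from Roudenko's work transfer uniformly to the cubes of the $\alpha$-grid at any level; once this uniformity is in hand, the remaining steps are the classical Plancherel--P\'olya-style manipulations.
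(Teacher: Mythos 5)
Your overall plan — a direct Plancherel--P\'olya argument at scale $r_k^{-1}$ using the reproducing identity $\vg=\mathcal{F}^{-1}(\phi_k)*\vg$, decay of $\mathcal{F}^{-1}(\phi_k)$, and reducing matrices with a polynomial comparison $\|A_{k,\ell}A_{k,m}^{-1}\|\lesssim(1+|\ell-m|)^\beta$ — is a reasonable and genuinely different route from the paper's. However, your final step contains a genuine error. The ``matrix-weighted averaging estimate'' you invoke, $\int_Q\|A_Q\vf(y)\|^p\,dy\lesssim\int_Q|W^{1/p}(y)\vf(y)|^p\,dy$, is simply false for general $\vf$, already in the scalar case. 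Take $n=1$, $Q=[0,1]$, $w(y)=y^\gamma$ with $0<\gamma<p-1$ (so $w\in A_p$), and $f=\mathbf{1}_{[0,\epsilon]}$: then $A_Q\asymp 1$ and the left side is $\asymp\epsilon$ while the right side is $\asymp\epsilon^{\gamma+1}\ll\epsilon$. What is true under $\mathbf{A}_p$ is the duality-type bound $\frac{1}{|Q|}\int_Q\|A_QW^{-1/p}(y)\|^{p'}\,dy\leq C$, which does not give a pointwise a.e.\ bound on $\|A_QW^{-1/p}(y)\|$ and hence does not yield the claimed per-cube estimate. Roudenko's paper does not contain the estimate you attribute to it. To fix this you should never pass through $\|A_{k,m}\vg(y)\|$ as an unconditional quantity: instead, already in the first convolution bound write $\|A_{k,\ell}\vg(y)\|\leq\|A_{k,\ell}W^{-1/p}(y)\|\,|W^{1/p}(y)\vg(y)|$, exchange $A_{k,\ell}\to A_{k,m}$ via the polynomial comparison, and then apply the $\mathbf{A}_p$ bound on $\fint_{Q(k,m)}\|A_{k,m}W^{-1/p}\|^{p'}$ together with H\"older. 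That keeps the target quantity $|W^{1/p}\vg|$ in play throughout, which the band-limitedness is not exploited to recover once you have discarded it.

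For comparison, the paper avoids all of this machinery by a clean rescaling: it sets $\tilde\vg:=e^{\pm i\frac{\pi}{a}k\cdot}\vg(\frac{\pi}{a}r_k^{-1}\cdot)$ so that $\hat{\tilde\vg}$ is supported in a fixed ball around the origin, applies Roudenko's Lemma 6.3 of \cite{Rou03a} verbatim to $\tilde\vg$ with the dilated weight $W(\frac{\pi}{a}r_k^{-1}\cdot)$ (which has the same $\mathbf{A}_p$ constant, since dilation is a bijection on cubes), and changes variables back. This treats Roudenko's one-scale sampling result as a black box rather than re-deriving it, which is why the paper's argument needs neither reducing matrices nor the almost-diagonal kernel estimate you build.
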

\begin{proof}
Recall that $B_k^\alpha:=B(kr_k,c_1r_k)$, so for $\vg\in \Omega_k$, the modified function $$\tilde{\vg}:=e^{i \frac{\pi}{a} k\cdot}\vg\bigg(\frac{\pi}{a}r_k^{-1}\cdot\bigg)$$ satisfies $\text{supp}(\hat{{\tilde{\vg}}})\subseteq B(0,c_1\frac{\pi}{a})\subseteq B(0,2)$.
Now, by a change of variable,

\begin{align*}\int_{Q(k,\ell)}\big|W^{1/p}(x){\tilde{\vg}}\big(\ell\big)\big|^p\,dx&\asymp r_k^{-n}\int_{\ell+[0,1)^n}\bigg|W^{1/p}\bigg(\frac{\pi}{a}r_k^{-1}x\bigg){\tilde{\vg}}\big(\ell\big)\bigg|^p\,du
\end{align*}
We now use the general sampling result \cite[Lemma 6.3]{Rou03a} to deduce that there exists a constant $c_{p,n}$, independent of $g$, such that
$$\sum_{\ell\in \bZ^n}\int_{\ell+[0,1)^n}\bigg|W^{1/p}\bigg(\frac{\pi}{a}r_k^{-1}x\bigg){\tilde{\vg}}\big(\ell\big)\bigg|^p\,dx\leq  c_{p,n}\|{\tilde{\vg}}\|_{L^p(W(\frac{\pi}{a}r_k^{-1}\cdot))}^p.$$
We apply another change of variable to obtain,
\begin{align*}
\sum_{\ell\in \bZ^n}\int_{Q(k,\ell)}\bigg|W^{1/p}(x)\vg\bigg(\frac{\pi}{a}r_k^{-1}\ell\bigg)\bigg|^p\,dx&\asymp
r_k^{-n}\sum_{\ell\in \bZ^n}\int_{\ell+[0,1)^n}\bigg|W^{1/p}\bigg(\frac{\pi}{a}r_k^{-1}x\bigg){\tilde{\vg}}\big(\ell\big)\bigg|^p\,dx\\&\leq  c_{p,n}r_k^{-n}\|{\tilde{\vg}}\|_{L^p(W(\frac{\pi}{a}r_k^{-1}\cdot))}^p\\
&=c_{p,n}\|\vg\|_{L^p(W)}^p.
\end{align*}
\end{proof}
We can use Lemma \ref{le:samp} to obtain the following norm estimate for the canonical expansion coefficients $\vc_{k,\ell}:=\langle \vf,\phi_{k,\ell}\rangle$ associated with the tight frame defined in Eq.\ \eqref{eq:TF}. The result show that the natural analysis operator for this system is bounded from $M_{p,q}^{\alpha,s}(W)$ to $m_{p,q}^{\alpha,s}(W)$ -- at least for "nice" matrix weights $W$.
\begin{proposition}\label{prop:coef}
  Let $\alpha\in [0,1]$, $1\leq p<\infty$, and $0<q<\infty$. Suppose $W\in \mathbf{A}_p$. Then there exists a constant $C:=C(\alpha,q,W)$ such that for $\vf\in M_{p,q}^{\alpha,s}(W)$,
  \begin{equation}
      \|\{\vc_{k,\ell}\}_{k,\ell}\|_{m_{p,q}^{\alpha,s}(W)}\leq C \|\vf\|_{M_{p,q}^{\alpha,s}(W)},
  \end{equation}
  with $\vc_{k,\ell}:=\langle \vf,\phi_{k,\ell}\rangle$.
\end{proposition}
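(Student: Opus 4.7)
The plan is to express the frame coefficient $\vc_{k,\ell}=\langle\vf,\varphi_{k,\ell}\rangle$ as a scaled sample of the band-limited vector-valued function $\vg_k:=\theta_k^\alpha(D)\vf$, and then invoke the weighted sampling bound of Lemma \ref{le:samp}. The second ingredient is the norm equivalence \eqref{eq:canonicaltheta}, which allows us to measure $\|\vf\|_{M_{p,q}^{\alpha,s}(W)}$ using the ``square-root'' system $\{\theta_k^\alpha\}$ in place of the BAPU. We may fix the covering constant $c_1$ in Example \ref{ex:cov} large enough that $\supp(\theta_k^\alpha)\subset\supp(\phi_k)\subset B(\xi_k,\tfrac{3}{2}c_0r_k)\subset B_k^\alpha$, which ensures $\vg_k\in\Omega_k$ and makes Lemma \ref{le:samp} applicable.

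The crux is the sample identity for $\vc_{k,\ell}$. By Parseval applied coordinate-wise and the fact that $\theta_k^\alpha$ is real, $\vc_{k,\ell}=\langle\widehat{\vg_k},e_{k,\ell}\rangle$. Since $a\geq 2c_1\geq\tfrac{3}{2}c_0$, the indicator $\mathbf{1}_{Q_0}(r_k^{-1}\xi-k)$ appearing in $e_{k,\ell}$ is identically $1$ on $\supp(\widehat{\vg_k})$ and may be dropped. A change of variable $\eta=r_k^{-1}\xi-k$ together with Fourier inversion then produces the explicit identity
\begin{equation*}
\vc_{k,\ell}=r_k^{-n/2}\,e^{i\pi(k\cdot\ell)/a}\,\vg_k(-x_{k,\ell}),\qquad k,\ell\in\bZ^n,
\end{equation*}
and in particular $|\vc_{k,\ell}|=r_k^{-n/2}|\vg_k(-x_{k,\ell})|$.

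Inserting this into the definition of the $m_{p,q}^{\alpha,s}(W)$-norm and using $|Q(k,\ell)|^{-1/2}=(\pi/a)^{-n/2}r_k^{n/2}$, the factors $r_k^{\pm n/2}$ cancel, so the inner quantity is a constant multiple of
\begin{equation*}
\Big(\sum_{\ell\in\bZ^n}\int_{Q(k,\ell)}|W^{1/p}(x)\vg_k(-x_{k,\ell})|^p\,dx\Big)^{1/p}.
\end{equation*}
This is exactly the expression controlled by Lemma \ref{le:samp}, modulo the sign on the sample points. Since the proof of Lemma \ref{le:samp} reduces, after a modulation and dilation, to the scalar sampling estimate \cite[Lemma 6.3]{Rou03a}, which does not depend on the particular choice of sample point in each unit cube, the parallel argument with sample set $\{-x_{k,\ell}\}_\ell$ gives
\begin{equation*}
\sum_{\ell}\int_{Q(k,\ell)}|W^{1/p}(x)\vg_k(-x_{k,\ell})|^p\,dx\leq c_{p,n}\,\|\vg_k\|_{L^p(W)}^p.
\end{equation*}

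Taking the weighted $\ell_q$-norm in $k$ and invoking \eqref{eq:canonicaltheta} then finishes the proof:
\begin{equation*}
\|\{\vc_{k,\ell}\}\|_{m_{p,q}^{\alpha,s}(W)}\leq C\,\Big\|\big\{r_k^s\|\theta_k^\alpha(D)\vf\|_{L^p(W)}\big\}_k\Big\|_{\ell_q}\asymp\|\vf\|_{M_{p,q}^{\alpha,s}(W)}.
\end{equation*}
The principal technical obstacle is the first step: verifying the support containment that allows the indicator $\mathbf{1}_{Q_0}$ in $e_{k,\ell}$ to be discarded, and handling the sign on the sample points so that Lemma \ref{le:samp} can be applied cleanly. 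Once these are in place the rest of the argument is a bookkeeping exercise with the factors of $r_k$.
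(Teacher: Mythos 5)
Your proposal follows exactly the paper's route: express the frame coefficient as a scaled sample of $\vg_k=\theta_k^\alpha(D)\vf$, invoke the weighted sampling bound of Lemma~\ref{le:samp}, and finish with the norm equivalence \eqref{eq:canonicaltheta}. That is the correct strategy and the bookkeeping with the $r_k^{\pm n/2}$ factors is right.

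The one point where you diverge from the paper is the sign in the sample identity. You correctly compute $\vc_{k,\ell}=r_k^{-n/2}e^{i\pi(k\cdot\ell)/a}\vg_k(-x_{k,\ell})$; the paper writes $\vg_k(+x_{k,\ell})$ at the analogous step. Your sign is in fact the one that follows from Eqs.~\eqref{eq:TF} and \eqref{eq:xk} as stated, and it is also what \eqref{eq:muk} implies (the paper's own localisation estimate \eqref{eq:e1} has a sign that is inconsistent with \eqref{eq:muk}). However, your treatment of the resulting sign discrepancy is not adequate: you argue that Roudenko's scalar sampling estimate ``does not depend on the particular choice of sample point in each unit cube,'' and hence that the ``parallel argument with sample set $\{-x_{k,\ell}\}$'' goes through. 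But $-x_{k,\ell}$ is not a point of $Q(k,\ell)$ for $\ell\neq 0$; it is the corner of $Q(k,-\ell)$, which is far from $Q(k,\ell)$. So the pairing sample $\leftrightarrow$ cube in your expression is not of the type the sampling lemma covers, and the offset between $-x_{k,\ell}$ and $Q(k,\ell)$ is unbounded in $\ell$, so one cannot appeal to a bounded translation of the sample points either. To close this, one would either re-index the frame from the outset (use $\varphi_{k,-\ell}$, which is the same frame, so that the peak of the $\ell$-indexed atom sits in $Q(k,\ell)$), or carry out a reflection $x\mapsto -x$ and verify that Lemma~\ref{le:samp} still applies after replacing $W$ by $W(-\cdot)$ and $\vg_k$ by $\vg_k(-\cdot)$ (noting $W(-\cdot)\in\mathbf{A}_p$ with the same constant and that band-limitedness is preserved). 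As written, the step ``the parallel argument with sample set $\{-x_{k,\ell}\}$ gives\ldots'' does not stand on its own, even though the conclusion is correct and the difficulty ultimately traces to a sign inconsistency in the paper's conventions rather than to anything substantive.
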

\begin{proof}
Recall that $\phi_{k,\ell}\in \Omega_k$ with
$$\hat{\varphi}_{k,\ell}(\xi)=(2a)^{-n/2}e^{-i\frac{\pi}{a}\ell\cdot k}r_k^{-n/2}\theta_k^\alpha(\xi)e^{i\frac{\pi}{a}\ell\cdot r_k^{-1}\xi},
$$
so, we have
$$\langle \vf,\phi_{k,\ell}\rangle=(2\pi)^{-n/2}e^{-i\frac{\pi}{a}\ell\cdot k}|Q(k,\ell)|^{1/2}\theta_k^\alpha(D)\vf\bigg(\frac{\pi}{a}r_k^{-1}\ell\bigg).$$
Hence, using the observation in \eqref{eq:canonicaltheta},
\begin{align*}
  \|  \{\langle \vf,\phi_{k,\ell}\rangle\}_{k,\ell} \|_{m_{p,q}^{\alpha,s}(W)}&=
  \bigg\|\bigg\{r_k^s\bigg\|\sum_{\ell\in\bZ^n} |Q(k,\ell)|^{-\frac{1}{2}}\langle \vf,\phi_{k,\ell}\rangle\mathbf{1}_{Q(k,\ell)}\bigg\|_{L^p(W)}\bigg\}_k\bigg\|_{\ell_q}\\
  &\asymp\bigg\|\bigg\{r_k^s\bigg\|\bigg[\sum_{\ell\in \bZ^n}\int_{Q(k,\ell)}\bigg\|W^{1/p}(x)\theta_k^\alpha(D)\vf \bigg(\frac{\pi}{a}r_k^{-1}\ell\bigg)\bigg\|^p dx\bigg]^{1/p}\bigg\}_k\bigg\|_{\ell_q}\\
  &\leq c_{p,q}\big\|\big\{r_k^s \|\theta_k^\alpha(D)\vf\|_{L^p(W)}\big\}_k\big\|_{\ell_q}\\
  &\asymp \|\vf\|_{M_{p,q}^{\alpha,s}(W)},
\end{align*}
where we used the observation in Eq.\ \eqref{eq:canonicaltheta} for the final estimate.
\end{proof}

  We will now prove that the corresponding reconstruction operator for the tight frame defined in Eq.\ \eqref{eq:TF} is bounded from $m_{p,q}^{\alpha,s}(W)$ to $M_{p,q}^{\alpha,s}(W)$ for suitable weights $W$. 

We will  need the notion of a doubling matrix weight for the following result. Using the notation introduced in \eqref{eq:cube}, we have the following definition.

\begin{definition}\label{def:doub}
    We say that the matrix weight $W:\cD\rightarrow \bC^{N\times N}$ satisfies the doubling condition of order $0<p<\infty$ if there is a constant $c$ such that
    for all $\vx,\vy\in\cD$ and $r>0$,
    \begin{equation}\label{eq:doub}
        \int_{R[\vx,2r]} \|W^{1/p}(t)\vy\|^p\,dt\leq c\int_{R[\vx,r]} \|W^{1/p}(t)\vy\|^p\,dt.
    \end{equation}
    Suppose $c =2^\beta$ is the smallest constant for which \eqref{eq:doub} holds, then $\beta$ is called the doubling exponent of $W$.
\end{definition}

\begin{remark}\label{rem:doub}
Notice that \eqref{eq:doub} is stating the condition that the scalar measure $w_{\vy}(t):=\|W^{1/p}(t) \vy\|^p$ is uniformly doubling and not identically zero (a.e.). It is known that whenever $W \in \mathbf{A}_p$, then $w_{\vy}$ is a scalar $A_p$ weight for any $\vy\in\cD$. Morevover,  the $A_p$ constant is bounded by the $\mathbf{A}_p$ constant of $W$ and thus independent of $\vy$, see, e.g., \cite[Corollary 2.3]{Gol03a}. This implies that  $w_{\vy}$ is a scalar doubling measure, see \cite{MR1232192}, and the corresponding doubling exponent $\beta$ is also independent of $\vy$.
 \end{remark} 
 
 We have the following result that in particular applies to matrix weights in $\mathbf{A}_p$, c.f.\ Remark \ref{rem:doub}. 
\begin{proposition}\label{prop:recon}
  Let $\alpha\in [0,1]$, $1\leq p<\infty$, $0<q<\infty$, and suppose $W$ satisfies \eqref{eq:doub}. Then there exists a constant $C$ such that for any finite vector-valued coefficient sequence  $\vs:=\{\vc_{j,\ell}\}_{(j,\ell)\in F}$, $F\subset \bZ^n\times\bZ^n$,
  \begin{equation}
      \bigg\|\sum_{(j,\ell)\in F}\vc_{j,\ell}\phi_{j,\ell}\bigg\|_{M_{p,q}^{\alpha,s}(W)}\leq C \|\{\vc_{j,\ell}\}\|_{m_{p,q}^{\alpha,s}(W)}.
  \end{equation}
  
\end{proposition}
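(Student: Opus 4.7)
The plan is to frequency-localise $\vf := \sum_{(j,\ell) \in F}\vc_{j,\ell}\phi_{j,\ell}$ via the canonical BAPU of \eqref{eq:canonical}, reduce to a single-scale comparison between band-limited atoms and their associated step-function expansions, and then assemble the $\ell^q$ norm. Since $\hat{\phi}_{j,\ell}$ is supported in a ball $B(\xi_j,cr_j)$ uniformly in $\ell$ and $\psi_k$ is supported in a set of comparable location and scale, the finite-overlap property \eqref{eq:overl} forces $\psi_k(D)\phi_{j,\ell}=0$ unless $j$ lies in a uniformly bounded index set $\Lambda_k$ on which $r_j\asymp r_k$. Lemma \ref{le:conv} combined with the localisation of $\mathcal{F}^{-1}\psi_k$ recorded in Remark \ref{re:mu} then yields
\[
\Bigl\|\psi_k(D)\sum_\ell \vc_{j,\ell}\phi_{j,\ell}\Bigr\|_{L^p(W)}\leq C\Bigl\|\sum_\ell \vc_{j,\ell}\phi_{j,\ell}\Bigr\|_{L^p(W)}.
\]
Summing over $j\in\Lambda_k$ and invoking a standard finite-overlap argument on $\ell^q$ (the ordinary triangle inequality when $q\ge 1$, or $q$-subadditivity when $q<1$) reduces the proposition to the single-scale estimate
\[
\Bigl\|\sum_\ell \vc_{j,\ell}\phi_{j,\ell}\Bigr\|_{L^p(W)}\leq C\Bigl\|\sum_\ell |Q(j,\ell)|^{-1/2}\vc_{j,\ell}\mathbf{1}_{Q(j,\ell)}\Bigr\|_{L^p(W)},
\]
with $C$ independent of $j$.

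For the single-scale estimate, the atom bound \eqref{eq:e1} gives, for any large $M$, $|\phi_{j,\ell}(x)|\leq C_M r_j^{n/2}(1+r_j|x-x_{j,\ell}|)^{-M}$. Writing $h_\ell(x):=\|W^{1/p}(x)\vc_{j,\ell}\|$ and $\omega_\ell(x):=(1+r_j|x-x_{j,\ell}|)^{-M}$, the triangle inequality yields the pointwise bound
\[
\bigl\|W^{1/p}(x)\sum_\ell \vc_{j,\ell}\phi_{j,\ell}(x)\bigr\|\leq C_M\, r_j^{n/2}\sum_\ell \omega_\ell(x)\, h_\ell(x).
\]
Since $\{x_{j,\ell}\}_\ell$ is a rescaled lattice of spacing $(\pi/a)r_j^{-1}$, a Riemann-sum comparison gives $\sum_\ell \omega_\ell(x)\leq C$ uniformly in $x$ provided $M>n$, and H\"older's inequality in $\ell$ then produces $\bigl(\sum_\ell \omega_\ell h_\ell\bigr)^p\leq C\sum_\ell \omega_\ell h_\ell^p$. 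Remark \ref{rem:doub} identifies $d\mu_\vy(x):=\|W^{1/p}(x)\vy\|^p\,dx$ as a scalar doubling measure with uniform doubling exponent $\beta$ (independent of $\vy$), so decomposing $\bR^n$ into dyadic annuli $A_m:=\{x:2^{m-1}<1+r_j|x-x_{j,\ell}|\leq 2^m\}$ --- each contained in a cube around $x_{j,\ell}$ of side $\asymp 2^m r_j^{-1}$ and hence comparable to a bounded number of doublings of $Q(j,\ell)$ --- gives $\mu_{\vc_{j,\ell}}(A_m)\leq C\, 2^{m\beta}\mu_{\vc_{j,\ell}}(Q(j,\ell))$. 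Choosing $M>\beta$ makes the resulting geometric series converge and yields $\int\omega_\ell h_\ell^p\,dx\leq C\int_{Q(j,\ell)}h_\ell^p\,dx$ uniformly in $j,\ell$. Summing in $\ell$, recalling that the $Q(j,\ell)$ tile $\bR^n$ and that $|Q(j,\ell)|^{-1/2}\asymp r_j^{n/2}$, completes the one-scale estimate.

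The central difficulty is precisely this single-scale step: transferring the smooth polynomial decay of the atoms $\phi_{j,\ell}$ onto the sharp indicators $\mathbf{1}_{Q(j,\ell)}$ in the presence of a genuinely matrix-valued weight. The key device is to apply H\"older in $\ell$ \emph{before} invoking any weight estimate, which reduces matters to the uniform scalar doubling of $x\mapsto \|W^{1/p}(x)\vc_{j,\ell}\|^p$ with the coefficient $\vc_{j,\ell}$ held fixed, exactly as provided by Remark \ref{rem:doub}. One is then free to choose the decay parameter $M$ in \eqref{eq:e1} strictly larger than the uniform doubling exponent $\beta$, which is permissible since $\phi_{j,\ell}$ has arbitrary polynomial decay.
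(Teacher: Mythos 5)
Your proof is correct and follows essentially the same route as the paper's: frequency-localise via the canonical BAPU and the finite-overlap property, invoke Lemma~\ref{le:conv}/Remark~\ref{re:mu} at fixed scale, bound the atom sum using the decay \eqref{eq:e1}, apply the discrete H\"older inequality in $\ell$ to reduce to the scalar doubling measures $x\mapsto\|W^{1/p}(x)\vc_{j,\ell}\|^p$, and then estimate $\int(1+r_j|x-x_{j,\ell}|)^{-M}\,d\mu_{\vc_{j,\ell}}$ by a dyadic-annulus/doubling argument (exactly the content of Lemma~\ref{le:sq}). The only cosmetic difference is how you distribute the decay factor $\omega_\ell$ in the H\"older step; the paper splits it as $\omega_\ell^{1/2}\cdot\omega_\ell^{1/2}$ and ends up with $\omega_\ell^{p/2}$ against $h_\ell^p$, whereas you keep one factor of $\omega_\ell$ on each side, but both choices work since the decay exponent can be taken arbitrarily large.
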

\begin{proof}
We have, using the fact that $\text{supp}(\psi_k)\subseteq B^\alpha_k$,

\begin{align*}
      \bigg\|\sum_{(j,\ell)\in F}\vc_{j,\ell}\phi_{j,\ell}\bigg\|_{M_{p,q}^{\alpha,s}(W)}&=
      \bigg\|\bigg\{r_k^s\bigg\|\psi_k(D)\sum_{(j,\ell)\in F}\vc_{j,\ell}\phi_{j,\ell}\bigg\|_{L^p(W)}\bigg\}_k\bigg\|_{\ell_q}\\
      &= \bigg\|\bigg\{r_k^s\bigg\|\psi_k(D)\sum_{j\in N(k)}\sum_\ell\vc_{j,\ell}\phi_{j,\ell}\bigg\|_{L^p(W)}\bigg\}_k\bigg\|_{\ell_q},
\end{align*}
where  $N(k)=\{m\in\bZ^n:B^\alpha_m\cap B^\alpha_k\not=\emptyset\}$. Now, $r_j\asymp r_k$ (uniformly) for $j\in N(k)$, so by Remark \ref{re:mu},

\begin{align*}
r_k^s\bigg\|\psi_k(D)\sum_{j\in N(k)}\sum_\ell\vc_{j,\ell}\phi_{j,\ell}\bigg\|_{L^p(W)}
     & \leq C r_k^s\bigg\|\sum_{j\in N(k)}\sum_\ell\vc_{j,\ell}\phi_{j,\ell}\bigg\|_{L^p(W)}\\
     &\leq C' \sum_{j\in N(k)}\bigg\|r_j^s \sum_\ell\vc_{j,\ell}\phi_{j,\ell}\bigg\|_{L^p(W)}.
\end{align*}
Recall that $\phi_{j,\ell}$ satisfies the decay property \eqref{eq:e1} for any $N>0$. We now use \eqref{eq:e1} to obtain the estimate

\begin{align*}
\bigg\| \sum_\ell\vc_{j,\ell}\phi_{j,\ell}\bigg\|_{L^p(W)}^p&\leq 
\int_{\bR^n} \bigg(\sum_\ell \|W^{1/p}(x)\vc_{j,\ell}\|\, |\phi_{j,\ell}(x)|\bigg)^p\,dx\\
&\leq C_N\int_{\bR^n} \bigg(r_j^{n/2}\sum_\ell \|W^{1/p}(x)\vc_{j,\ell}\|\, \big(1+r_j|x-x_{j,\ell}|\big)^{-N}\bigg)^p\,dx\\
&\leq C_N'r_j^{np/2}\int_{\bR^n} \sum_\ell \|W^{1/p}(x)\vc_{j,\ell}\|^p\, \big(1+r_j\big|x-x_{j,\ell}\big|\big)^{-\frac{Np}{2}}\,dx,
    \end{align*}
where we used the discrete H\"older inequality for the last step in the case $1<p<\infty$ with $N$ chosen large enough such that for the dual H\"older exponent $p'$ to $p$,
$$
  \sup_{u\in\bR^n} \sum_\ell \bigg(1+\bigg|u-\frac{\pi}{a}\ell\bigg|\bigg)^{-\frac{Np'}{2}} <\infty.$$
  For we $p=1$ we obtain the estimate directly without using H\"older's inequality. 
  The function $w_{j,\ell}(x):=\|W^{1/p}(x)\vc_{j,\ell}\|^p$ is doubling with a doubling constant $\beta>0$ independent of $j$ and $\ell$. We can therefore use Lemma \ref{le:sq} below to obtain the following estimate,
  \begin{align*}
\bigg\| \sum_\ell\vc_{j,\ell}\phi_{j,\ell}\bigg\|_{L^p(W)}^p&\leq 
C_N'r_j^{np/2}\sum_\ell\int_{\bR^n}  \|W^{1/p}(x)\vc_{j,\ell}\|^p\, \big(1+r_j\big|x-x_{j,\ell}\big|\big)^{-\frac{Np}{2}}\,dx\\
&\leq 
C_N'r_j^{np/2}\sum_{\ell\in\bZ^n}\int_{Q(j,\ell)}  \|W^{1/p}(x)\vc_{j,\ell}\|^p\, dx\\
&\asymp\bigg\| \sum_\ell |Q(j,\ell)|^{-1/2} \vc_{j,\ell} \mathbf{1}_{Q(j,\ell)}\bigg\|_{L^p(W)}^p.
\end{align*}
We may now conclude that
\begin{align}
      \bigg\|\sum_{(j,\ell)\in F}\vc_{j,\ell}\phi_{j,\ell}\bigg\|_{M_{p,q}^{\alpha,s}(W)}&\leq C
     \bigg\|\bigg\{  \sum_{j\in N(k)}\bigg\|r_j^s \sum_\ell\vc_{j,\ell}\phi_{j,\ell}\bigg\|_{L^p(W)}\bigg\}_k\bigg\|_{\ell_q}\nonumber\\
     &\leq 
 C  \bigg\|\bigg\{  \sum_{j\in N(k)}r_j^s\bigg\| \sum_\ell |Q(j,\ell)|^{-1/2} \vc_{j,\ell} \mathbf{1}_{Q(j,\ell)}\bigg\|_{L^p(W)}\bigg\}_k\bigg\|_{\ell_q}\nonumber\\  
   &\leq 
 C  \bigg\|\bigg\{  \sum_{j}r_j^s\bigg\| \sum_\ell |Q(j,\ell)|^{-1/2} \vc_{j,\ell} \mathbf{1}_{Q(j,\ell)}\bigg\|_{L^p(W)}\bigg\}_k\bigg\|_{\ell_q}\nonumber\\
 &\leq \|\{\vc_{j,\ell}\}\|_{m_{p,q}^{\alpha,s}(W)},\label{eq:LW}
 \end{align}
 where we used the uniform bound on the cardinality of $N(k)$. This concludes the proof.
 
\end{proof}

\begin{remark}\label{rem:dense}
Since $\{\phi_{k,\ell}\}_{k,\ell}\subset \mathcal{S}(\bR^n)$, it follows easily from Proposition  \ref{prop:coef} and Proposition \ref{prop:recon}, by standard arguments, that $\bigoplus_{j=1}^N\dS$ is dense in $M_{p,q}^{\alpha,s}(W)$ whenever $1\leq p<\infty$, $0<q<\infty$, and $W\in\mathbf{A}_p$.
\end{remark}

The following technical lemma was used in the proof of Proposition \ref{prop:recon}.
\begin{lemma}\label{le:sq}
Let $w:\bR^n \rightarrow (0,\infty)$ be a function satisfying the doubling condition
  \begin{equation*}\label{eq:doub2}
        \int_{R[\vx,2r]}w(t)\,dt\leq c\int_{R[\vx,r]} w(t)\,dt,\qquad \vx\in\bR^n,r>0,
    \end{equation*}
 with doubling exponent $\beta>0$ such that $2^\beta=c$. Let $j,\ell\in\bZ^n$ and let the quantities $Q(j,\ell)$, $r_j$, $x_{j,\ell}$ be defined by Eqs.\ \eqref{eq:qk}, \eqref{eq:rk} and \eqref{eq:xk}, respectively. Then for $L>\beta$, we have 
$$\int_{\bR^n}  w(x)\big(1+r_j\big|x-x_{j,\ell}\big|\big)^{-L}\,dx \leq C\int_{Q(j,\ell)}  w(x)\, \,dx,$$
\end{lemma}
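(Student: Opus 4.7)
The plan is to carry out a dyadic annular decomposition of $\bR^n$ centered at $x_{j,\ell}$, where the natural length scale is $r_j^{-1}$ (the side length of $Q(j,\ell)$, up to the constant $\pi/a$). On each annulus the weight $(1+r_j|x-x_{j,\ell}|)^{-L}$ is essentially constant, and the $w$-mass of the enclosing cube is controlled by iterating the doubling inequality.

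More precisely, set $\rho:=\tfrac{\pi}{a}r_j^{-1}$ and, for $k\geq 0$, let
\[
B_k := R[x_{j,\ell},2^k\rho],\qquad A_0:=B_0,\qquad A_k:=B_k\setminus B_{k-1} \ (k\geq 1).
\]
First I would note that on $A_k$ ($k\geq 1$) we have $r_j|x-x_{j,\ell}|\gtrsim 2^{k-1}$, so
\[
(1+r_j|x-x_{j,\ell}|)^{-L}\leq C\, 2^{-kL},
\]
while on $A_0$ the factor is bounded by $1$. Hence
\[
\int_{\bR^n} w(x)(1+r_j|x-x_{j,\ell}|)^{-L}\,dx \;\leq\; C\sum_{k=0}^{\infty} 2^{-kL}\int_{B_k} w(x)\,dx .
\]

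Next I would iterate the doubling condition: applying \eqref{eq:doub2} $k$ times yields $\int_{B_k} w\leq c^{k}\int_{B_0} w = 2^{k\beta}\int_{B_0} w$. Because $B_0=R[x_{j,\ell},\rho]$ and $Q(j,\ell)$ is a cube of the same side length $\rho$ (up to a harmless constant, depending on which of the two conventions in \eqref{eq:cube} is used) sharing the vertex/center $x_{j,\ell}$, the cube $B_0$ can be covered by a bounded number (depending only on $n$) of translates of cubes comparable to $Q(j,\ell)$; doubling then gives $\int_{B_0} w\leq C\int_{Q(j,\ell)} w$ with $C$ depending only on $n$ and $\beta$. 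Substituting back yields
\[
\int_{\bR^n} w(x)(1+r_j|x-x_{j,\ell}|)^{-L}\,dx \;\leq\; C\Bigg(\sum_{k=0}^{\infty} 2^{k(\beta-L)}\Bigg)\int_{Q(j,\ell)} w(x)\,dx,
\]
and the geometric series converges precisely because $L>\beta$, giving the claim.

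The routine estimates are harmless; the only mildly subtle point is the recentering from $Q(j,\ell)$ (for which $x_{j,\ell}$ is only a corner under the second convention in \eqref{eq:qk}) to a doubling ball centered at $x_{j,\ell}$. This is handled once and for all by covering one comparable-size cube by boundedly many translates of the other and applying the doubling hypothesis a bounded number of times, a constant that is absorbed into $C$.
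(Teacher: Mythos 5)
Your proof is correct and follows essentially the same approach as the paper: a dyadic (annular) decomposition of $\bR^n$ centered at $x_{j,\ell}$ at scale $r_j^{-1}$, bounding the kernel by $2^{-kL}$ on the $k$-th annulus, iterating the doubling inequality to control the mass of the enclosing cube by $2^{k\beta}$ times the mass of the central cube, and summing the resulting geometric series using $L>\beta$. The paper simply takes $R_0 = Q(j,\ell)$ and partitions into $\ell^\infty$-annuli $R_m$ around $x_{j,\ell}$, tacitly treating $Q(j,\ell)$ as centered at $x_{j,\ell}$; the one thing you do beyond the paper is to address the discrepancy between the two expressions in \eqref{eq:qk} (a centered cube of side $2\pi a^{-1}r_j^{-1}$ versus a corner cube of side $\pi a^{-1}r_j^{-1}$), which you resolve correctly by a bounded covering plus finitely many applications of doubling, absorbing the resulting constant into $C$.
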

\begin{proof}
We make a partition $\bR^n=\cup_{m=0}^\infty R_m$, where $R_0=Q(j,\ell)$ and the rectangular "annuli" $R_m$, $m\geq 1$, is defined by
$$R_m:=\bigg\{y\in\bR^n: \frac{\pi}{a}2^{m-1}r_j^{-1}\leq |y-x_{j,\ell}|_\infty <  \frac{\pi}{a}2^{m}r_j^{-1}\bigg\}.$$
Then 
\begin{align*}
 \int_{\bR^n}  w(x)\big(1+r_j\big|x-x_{j,\ell}\big|\big)^{-L}\,dx&=
\sum_{m=0}^\infty \int_{R_m}  w(x) \big(1+r_j\big|x-x_{j,\ell}\big|\big)^{-L}\,dx\\
&\leq C \sum_{m=0}^\infty 2^{-mL}\int_{R_m}  w(x)\, \,dx\\
\end{align*}
However, by the doubling property of $w(x)$, noting that $R_m\subseteq \{y: |y-x_{j,\ell}|_\infty < 2^m \frac{\pi}{a}r_j^{-1}\}$, we have
$$\int_{R_m}  w(x)\, \,dx \leq c2^{\beta m}\int_{R_0}  w(x)\, \,dx,$$
so
$$\int_{\bR^n}  w(x)\big(1+r_j\big|x-x_{j,\ell}\big|\big)^{-L}\,dx \leq C' \sum_{m=0}^\infty  2^{(\beta -L)m}\int_{R_0}  w(x)\, \,dx\leq C''\int_{R_0}  w(x)\, \,dx,$$
provided that $L>\beta$.
\end{proof}
\section{Stable expansions and almost diagonal matrices}\label{sec:4}
The band-limited tight frame $\{\phi_{j,k}\}$ provides a nice stable decomposition system for  $M^{\alpha,s}_{p,q}(W)$ whenever $W\in \mathbf{A}_p$. It is, however, desirable to extend the stability results to cover more general systems of localised ``molecules" as this will allow us to study, e.g., boundedness of various operators acting on  $M^{\alpha,s}_{p,q}(W)$. In this section, we will study molecules in a discretised setting using an adapted notion of almost diagonal matrices. 
\subsection{Reducing operators and the connection to scalar spaces}
It is known  that for any matrix weight $W:\cD\rightarrow \bC^{N\times N}$, $1\leq p<\infty$,  and $Q=Q(k,\ell)$ denoting the cubes from \eqref{eq:qk}, there exists a nonnegative-definite matrix $A_Q$ such that for $\vx\in\bR^N$,
$$|A_Q\vx|\asymp \rho_{p,Q}(\vx):=\left(\frac{1}{|Q|}\int_Q |W^{1/p}(t)\vx|^p\,dt\right)^{1/p},$$
with equivalence constants independent of $Q$ and $\vx$. $A_Q$ is referred to as a reducing operator for the norm $\rho_{p,Q}$ on $\bR^N$. Frazier and Roudenko, see \cite{Rou03a,Frazier:2004ub,MR4263690}, made the important observation that reducing operators can be used to make certain connections between the matrix-weighted Besov space and the scalar $\phi$-transform studied by Frazier and Jawerth \cite{Frazier1985}. We will now use a similar approach to study the vector-valued $\alpha$-modulation spaces. We will need the following definition.

\begin{definition}
Let $\alpha\in [0,1]$, $s\in \bR$, $1\leq p<\infty$, and $0<q\leq \infty$, and let $\{A_Q\}_{Q\in\cQ}$ be a family of reducing operators associated with $W$. For any vector-valued sequence $\{\vs_{k,\ell}\}_{k,\ell}$, we define 
$$\|\{\vs_{k,\ell}\}_{k,\ell}\|_{m_{p,q}^{\alpha,s}(\{A_{Q(k,\ell)}\})}:=
\bigg\|\bigg\{ r_k^s\bigg\|\sum_{\ell\in\bZ^n} |Q(k,\ell)|^{-\frac{1}{2}}|A_{Q(k,\ell)}\vs_{k,\ell}|\mathbf{1}_{Q(k,\ell)}\bigg\|_{L^p(dt)}\bigg\}_k\bigg\|_{\ell^q}.$$
\end{definition}

We have the following observation.
\begin{lemma}\label{le:eqi}
Let $\alpha\in [0,1]$, $s\in \bR$, $1\leq p<\infty$, and $0<q\leq \infty$, and let $\{A_Q\}_{Q\in\cQ}$ be a family of reducing operators associated with $W$. For any finite vector-valued sequence $\vs=\{\vs_{k,\ell}\}_{k,\ell}$, we have
$$\|\{\vs_{k,\ell}\}_{k,\ell}\|_{m_{p,q}^{\alpha,s}(W)}\asymp
\|\{\vs_{k,\ell}\}_{k,\ell}\|_{m_{p,q}^{\alpha,s}(\{A_{Q(k,\ell)}\})},$$
with equivalence constants independent of $\vs$.
\end{lemma}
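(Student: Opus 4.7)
The plan is to reduce both quantities to the same explicit sum by exploiting that, for fixed $k$, the family $\{Q(k,\ell)\}_{\ell\in\bZ^n}$ partitions $\bR^n$, so the two inner $L^p$-norms have pairwise disjoint summands. This reduces the equivalence to a pointwise application of the defining property of reducing operators on each cube $Q(k,\ell)$, with constants depending only on $W$ and $p$ (not on $k,\ell$).

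First I would unfold the inner $L^p(W)$-norm in the definition of $\|\cdot\|_{m_{p,q}^{\alpha,s}(W)}$. Since the indicators $\mathbf{1}_{Q(k,\ell)}$, $\ell\in\bZ^n$, have disjoint supports, there is no cross interaction and
\begin{equation*}
\bigg\|\sum_{\ell}|Q(k,\ell)|^{-1/2}\vs_{k,\ell}\mathbf{1}_{Q(k,\ell)}\bigg\|_{L^p(W)}^p
=\sum_{\ell}|Q(k,\ell)|^{-p/2}\int_{Q(k,\ell)}|W^{1/p}(t)\vs_{k,\ell}|^p\,dt.
\end{equation*}
Applying the defining equivalence of the reducing operator $A_{Q(k,\ell)}$ at the vector $\vs_{k,\ell}$ gives
\begin{equation*}
\int_{Q(k,\ell)}|W^{1/p}(t)\vs_{k,\ell}|^p\,dt
\;\asymp\;|Q(k,\ell)|\,|A_{Q(k,\ell)}\vs_{k,\ell}|^p,
\end{equation*}
with constants independent of $k,\ell$. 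Substituting back yields
\begin{equation*}
\bigg\|\sum_{\ell}|Q(k,\ell)|^{-1/2}\vs_{k,\ell}\mathbf{1}_{Q(k,\ell)}\bigg\|_{L^p(W)}^p
\;\asymp\;\sum_{\ell}|Q(k,\ell)|^{-p/2+1}|A_{Q(k,\ell)}\vs_{k,\ell}|^p.
\end{equation*}

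Next I would treat the right-hand side norm in the same way. Again the supports of $\mathbf{1}_{Q(k,\ell)}$ are disjoint for fixed $k$, so
\begin{equation*}
\bigg\|\sum_{\ell}|Q(k,\ell)|^{-1/2}|A_{Q(k,\ell)}\vs_{k,\ell}|\mathbf{1}_{Q(k,\ell)}\bigg\|_{L^p(dt)}^p
=\sum_{\ell}|Q(k,\ell)|^{-p/2+1}|A_{Q(k,\ell)}\vs_{k,\ell}|^p,
\end{equation*}
which is exactly the sum obtained above. Therefore the two inner norms are equivalent uniformly in $k$, and taking the $\ell^q$-norm with weights $r_k^s$ preserves the equivalence; for $q=\infty$ one replaces sum by supremum with the same argument. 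This delivers the stated two-sided bound with equivalence constants independent of the finite sequence $\vs$.

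The statement is essentially bookkeeping once the reducing-operator equivalence is applied cube by cube, so there is no real obstacle; the only point requiring slight care is ensuring that the equivalence constants in $\rho_{p,Q}\asymp|A_Q\cdot|$ are genuinely uniform in $Q\in\cQ$, which is given by the defining property of reducing operators recalled just before the lemma.
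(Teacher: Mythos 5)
Your proof is correct and follows essentially the same route as the paper's: for each fixed $k$, use disjointness of the cubes $Q(k,\ell)$ to split the inner $L^p$-norm into a sum over $\ell$, apply the defining equivalence $\rho_{p,Q(k,\ell)}(\vs_{k,\ell})\asymp|A_{Q(k,\ell)}\vs_{k,\ell}|$ with uniform constants, and observe that the resulting sum coincides with the one obtained by expanding $\|\cdot\|_{m_{p,q}^{\alpha,s}(\{A_{Q(k,\ell)}\})}$. Your presentation, in which both sides are reduced to the common expression $\sum_\ell |Q(k,\ell)|^{1-p/2}|A_{Q(k,\ell)}\vs_{k,\ell}|^p$ before taking the weighted $\ell^q$-norm, is the same computation as the paper's chain of equalities, just arranged slightly differently.
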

\begin{proof}
\begin{align*}
    \|\{\vs_{k,\ell}\}_{k,\ell}\|_{m_{p,q}^{\alpha,s}(W)}
    &=
    \bigg\|\bigg\{r_k^s\bigg\| \sum_{\ell\in\bZ^n} |Q(k,\ell)|^{-\frac{1}{2}}\big|W^{1/p}(t)\vs_{k,\ell}\big|\mathbf{1}_{Q(k,\ell)}(t)\bigg\|_{L^p(dt)}\bigg\}_k\bigg\|_{\ell^q}\\
    &= \bigg\|\bigg\{ r_k^s\bigg(\sum_{\ell\in\bZ^n} |Q(k,\ell)|^{-\frac{p}{2}}[\rho_{p,Q(k,\ell)}(\vs_{k,\ell})]^p|Q(k,\ell)|\bigg)^{1/p}\bigg\}_k\bigg\|_{\ell^q}\\
     &\asymp \bigg\|\bigg\{ r_k^s\bigg(\sum_{\ell\in\bZ^n} |Q(k,\ell)|^{-\frac{p}{2}}|A_{Q(k,\ell)}\vs_{k,\ell}|^p\int_Q \mathbf{1}_{Q(k,\ell)}(t)\,dt\bigg)^{1/p}\bigg\}_k\bigg\|_{\ell^q}\\
     &=\bigg\|\bigg\{ r_k^s\bigg\|\sum_{\ell\in\bZ^n} |Q(k,\ell)|^{-\frac{1}{2}}|A_{Q(k,\ell)}\vs_{k,\ell}|\mathbf{1}_{Q(k,\ell)}\bigg\|_{L^p(dt)}\bigg\}_k\bigg\|_{\ell^q}\\
     &=\|\{\vs_{k,\ell}\}_{k,\ell}\|_{m_{p,q}^{\alpha,s}(\{A_{Q(k,\ell)}\})}.
\end{align*}
\end{proof}

There is a straightforward connection between the (quasi-)norm given by $\|\,\cdot\,\|_{m_{p,q}^{\alpha,s}(\{A_{Q(k,\ell)}\})}$ and the scalar discrete $\alpha$-modulation space norm $\|\cdot\|_{m^{\alpha,s}_{p,q}}$, defined for a scalar sequence $t:=\{t_{k,\ell}\}$ by
\begin{equation}\label{eq:scalar}
    \|t\|_{m^{\alpha,s}_{p,q}}:=\bigg\|\bigg\{ r_k^s\bigg\|\sum_{\ell\in\bZ^n} |Q(k,\ell)|^{-\frac{1}{2}}t_{k,\ell}\mathbf{1}_{Q(k,\ell)}\bigg\|_{L^p(dt)}\bigg\}_k\bigg\|_{\ell^q},
\end{equation}
where we refer to \cite{Borup2007,Nielsen2012} for further details on the scalar discrete $\alpha$-modulation spaces.
Given a vector-valued sequence $\vs:=\{\vs_{k,\ell}\}_{k,\ell}$, we define the scalar sequence $t:=\{t_{Q(k,\ell)}\}$ by putting
$$t_{k,\ell}:=|A_{Q(k,\ell)}\vs_{k,\ell}|.$$
Then we clearly have
\begin{equation}\label{eq:connect}
    \|\vs\|_{m_{p,q}^{\alpha,s}(\{A_{Q(k,\ell)}\})}=\|t\|_{m^{\alpha,s}_{p,q}}.
\end{equation}

\subsection{Almost diagonal matrices} The identity provided by Eq.\ \eqref{eq:connect} combined  with Lemma \ref{le:eqi} allows us to use operators on the scalar space $m^{\alpha,s}_{p,q}$ to study the vector-valued $m^{\alpha,s}_{p,q}(W)$.  For this purpose, it will be useful to recall the following notion of almost diagonal matrices for the scalar spaces $m^{\alpha,s}_{p,q}$  introduced by Rasmussen and the author in \cite{Nielsen2012}.

\begin{definition}\label{doitmad}%
Assume that $\alpha\in[0,1]$, $s\in \bR$, $0<q< \infty$, and ${p}\in [1,\infty)$. A matrix $\mathbf{A}:=\{a_{(j,m)(k,n)}\}_{j,m,k,\ell\in\bZ^d}$
is called almost diagonal on $m^{s,\alpha}_{{p},q}$ if
there exist $J\geq \frac{n}{\min(1,q)}$ and  $C, \delta>0$ such that
\begin{align*}
|a_{(j,\ell)(k,m)}|
&\le C \omega_{(j,\ell)(k,m)}^s(J), \qquad j,m,k,n\in \Zn,
\end{align*}
where
\begin{align*}
\omega_{(j,\ell)(k,m)}^s(J):=\bigg(\frac{r_k}{r_j}\bigg)^{s+\frac{n}{2}}
\min\bigg(\bigg(\frac{r_j}{r_k}\bigg)^{J+\frac{\delta}{2}},&\bigg(\frac{r_k}{r_j}\bigg)^{\frac{\delta}{2}}\bigg) c_{jk}^\delta(J)\\
&\phantom{C}\times
(1+\min(r_k,r_j)|x_{k,m}-x_{j,\ell}|)^{-J-\delta},
\end{align*}
with
\begin{equation*}
c_{jk}^\delta(J):=\min\bigg(\bigg(\frac{r_j}{r_k}\bigg)^{J+\delta},\bigg(\frac{r_k}{r_j}\bigg)^{\delta}\bigg)
(1+\max(r_k,r_j)^{-1}|\xi_k-\xi_j|)^{-J-\delta}
\end{equation*}
with  $r_k$, $\xi_k$, and $x_{k,n}$ defined in Eqs.\ \eqref{eq:rk} and \eqref{eq:xk}.
We denote the set of almost diagonal matrices on $m^{s,\alpha}_{{p},q}$  by $\textrm{ad}_{{p},q}^{s,\alpha}$.
\end{definition}
\begin{remark}\label{rem:ad}
We mention that  a more symmetric sufficient condition  for the matrix $\mathbf{A}:=\{a_{(j,\ell)(k,m)}\}_{j,m,k,\ell\in\bZ^d}$ to be in $\textrm{ad}_{{p},q}^{s,\alpha}$ is obtained by requiring the existence of  $J>\frac{n}{\min(1,q)}$ and $M>\min\{2J,|s|+n/2\}$ such that,
\begin{align}
    |a_{(j,\ell)(k,m)}|&\leq  C\min\bigg\{\bigg(\frac{r_j}{r_k}\bigg)^{M},\bigg(\frac{r_k}{r_j}\bigg)^{M}\bigg\}(1+\min(r_k,r_j)|x_{k,m}-x_{j,\ell}|)^{-J}\nonumber\\
&\hspace{2cm}\times (1+\max(r_k,r_j)^{-1}|\xi_k-\xi_j|)^{-J}.\label{eq:ad_symm}
\end{align}   
\end{remark}
Any matrix $\mathbf{A}=\{a_{(j,\ell)(k,m)}\}_{j,m,k,\ell\in\bZ^d}$ in $\textrm{ad}_{{p},q}^{s,\alpha}$ induces a linear operator on $m^{\alpha,s}_{p,q}$ by calling on the usual matrix vector product. Specifically, let $s\in m^{\alpha,s}_{p,q}$ be a finite sequence, and put $t=\mathbf{A}s$, i.e.,
$$t_{(j,\ell)}:=\sum_{(k,m)\in\bZ^n\times\bZ^n} a_{(j,\ell)(k,m)}s_{(k,m)},\qquad (j,\ell)\in \bZ^n\times\bZ^n.$$
It was proven in \cite{Nielsen2012} that almost diagonal matrices are in fact bounded on the class $m^{\alpha,s}_{p,q}$.

\begin{proposition}\label{johnlennon}%
Suppose that $\mathbf{A} \in \textrm{ad}_{{p},q}^{s,\alpha}$. Then $\mathbf{A}$ is bounded on
$m^{\alpha,s}_{p,q}$.
\end{proposition}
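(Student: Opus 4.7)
The plan is to follow the Frazier–Jawerth matrix strategy \cite{Frazier1990}, adapted to the polynomial scale geometry of the $\alpha$-covering; a detailed argument of this type is carried out in \cite{Nielsen2012}, so here I merely sketch the main steps.

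First I would rewrite the (quasi-)norm on $m^{\alpha,s}_{p,q}$ in mixed-sequence form. Since $\{Q(k,\ell)\}_\ell$ is a disjoint partition of $\bR^n$ with $|Q(k,\ell)|=(\pi/a)^n r_k^{-n}$ for fixed $k$, the inner $L^p$-integral in \eqref{eq:scalar} collapses to a weighted $\ell^p$-sum, giving
$$\|t\|_{m^{\alpha,s}_{p,q}}\asymp\bigg(\sum_{k\in\bZ^n}r_k^{\sigma q}\bigg(\sum_{\ell\in\bZ^n}|t_{k,\ell}|^p\bigg)^{q/p}\bigg)^{1/q},\qquad \sigma:=s+\tfrac{n}{2}-\tfrac{n}{p},$$
with the usual modification for $q=\infty$. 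The task thus reduces to a weighted mixed-sequence bound with weight $r_k^\sigma$.

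Next I would split $\mathbf A=\mathbf A_0+\mathbf A_1$ according to whether $r_k\leq r_j$ or $r_k>r_j$; on each piece the $\min$ factor in $\omega^s_{(j,\ell)(k,m)}(J)$ collapses to a single power of $r_k/r_j$, and in particular on $\mathbf A_1$ one gains the strong decay $(r_j/r_k)^{J+\delta/2-s-n/2}$ that compensates for the unfavourable $(r_k/r_j)^{s+n/2}$ prefactor. For fixed $(j,k)$, I would view the inner sum $\sum_m a_{(j,\ell)(k,m)}s_{(k,m)}$ as a discrete convolution on the grids $x_{j,\ell}=(\pi/a)r_j^{-1}\ell$ and $x_{k,m}=(\pi/a)r_k^{-1}m$: the condition $J+\delta>n$, guaranteed by $J\geq n/\min(1,q)$ together with $p\geq 1$, allows a standard discrete Young-type estimate to produce an $\ell^p$-bound (in the index $\ell$) by $\|s_{(k,\cdot)}\|_{\ell^p}$ up to a factor $\max(r_j/r_k,r_k/r_j)^{n/p}$ that records the relative grid density.

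Combining this inner bound with the scale factor and renormalizing by $r_j^{\sigma}$ yields, after absorbing the $\ell^p$-step loss into the remaining scale exponent, an estimate of the schematic form
$$r_j^{\sigma}\bigg\|\sum_{(k,m)}a_{(j,\ell)(k,m)}s_{(k,m)}\bigg\|_{\ell^p(\text{in }\ell)}\leq C\sum_k K(j,k)\,r_k^{\sigma}\|s_{(k,\cdot)}\|_{\ell^p},$$
with
$$K(j,k):=\min\!\big\{(r_j/r_k)^{\delta'},(r_k/r_j)^{\delta'}\big\}\big(1+\max(r_k,r_j)^{-1}|\xi_k-\xi_j|\big)^{-J-\delta}$$
for some $\delta'>0$. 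I would close with a Schur-type test on $K$: the identity $\xi_k=kr_k$ with $r_k=\langle k\rangle^{\alpha/(1-\alpha)}$ from Example \ref{ex:cov} couples frequency centers and scales, and a lattice-counting argument yields $\sup_k\sum_j K(j,k)<\infty$ together with $\sup_j\sum_k K(j,k)<\infty$. Schur's test then delivers the $\ell^q$-bound for $q\geq 1$; for $0<q<1$ one applies the $q$-triangle inequality together with the additional decay room in $J$. The main obstacle lies in this last step: verifying that the scale, frequency, and spatial decays combine into a Schur-bounded kernel uniformly across the $\alpha$-scales, which is precisely where the lower bound $J\geq n/\min(1,q)$ is used decisively.
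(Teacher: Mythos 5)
The paper does not prove this proposition; it is cited from \cite{Nielsen2012}, and your sketch is exactly the Frazier--Jawerth-style argument carried out there: reduce $\|t\|_{m^{\alpha,s}_{p,q}}$ to the weighted $\ell^q(\ell^p)$ form with weight $r_k^\sigma$, $\sigma=s+\tfrac n2-\tfrac np$, split $\mathbf{A}=\mathbf{A}_0+\mathbf{A}_1$ by scale, apply an inner discrete Schur/Young bound in the translation index, and close with a cross-scale Schur test. Two minor bookkeeping imprecisions are worth flagging. First, the grid-density loss in the inner step is direction-dependent: it is $(r_j/r_k)^{n/p}$ when $r_k\leq r_j$ (output grid finer, input coarser) but $(r_k/r_j)^{n/p'}$ when $r_k>r_j$, so the symmetric factor $\max(r_j/r_k,r_k/r_j)^{n/p}$ you wrote underestimates the loss for $p>2$. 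Second, you only collapse the $\min$ inside $\omega^s_{(j,\ell)(k,m)}(J)$; there is a second $\min$ inside $c^\delta_{jk}(J)$, and it is the combination of the two that yields total scale decay $(r_j/r_k)^{2J+3\delta/2}$ on $\mathbf{A}_1$. After renormalizing by $r_j^\sigma/r_k^\sigma$ the $s$-dependence cancels and the $\mathbf{A}_1$ exponent becomes $2J+3\delta/2-n$, so $J\geq n/\min(1,q)\geq n$ makes it positive regardless of $p\in[1,\infty)$; hence neither slip affects the conclusion. The rest of your outline --- in particular the role of $J\geq n/\min(1,q)$ in the cross-scale Schur sum and the $q$-power trick for $0<q<1$ --- is correctly identified.
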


Next, we observe that the following elementary matrix estimate holds 
$$|A_{Q(k,\ell)}\vc|=|A_{Q(k,\ell)}A_{Q(j,m)}^{-1}A_{Q(j,m)}\vc|\leq \|A_{Q(k,\ell)}A_{Q(j,m)}^{-1}\|\cdot |A_{Q(j,m)}\vc|.$$
It is  therefore of interest to study families of reducing operators that are ``compatible'' with the almost diagonal classes introduced. This leads to the following definition.
\begin{definition}
 Let $\{A_Q\}_{Q\in \cQ}$ be a sequence of nonnegative-definite matrices and let $\beta>0$, $1\leq p<\infty$. We say that $\{A_Q\}_{Q\in \cD}$ is strongly doubling of order $(\beta,p)$ if there exists $c>0$ such that for $P=Q(k,m)$ and $Q=Q(j,\ell)$,
\begin{equation}\label{eq:AQ}
    \|A_QA_P^{-1}\|\leq c\max\bigg\{\left(\frac{r_j}{r_k}\right)^{n/p},\left(\frac{r_k}{r_j}\right)^{(\beta-n)/p}\bigg\}
\big(1+\min\{r_j,r_k\}|x_{j,\ell}-x_{k,m}|\big)^{\beta/p}.\end{equation}
\end{definition}

We have the following lemma, where we recall that, as an important special case,  any $W\in \mathbf{A}_p$ is doubling of order $p$.
\begin{lemma}\label{le:dou}
Let     $W$ be a doubling matrix weight of order $p > 0$ with doubling exponent $\beta$ as specified in Definition \ref{def:doub} and suppose $\{ A_Q\}_{Q\in\cQ}$ is a sequence of reducing operators of order $p$ for $W$. Then $\{A_Q\}$ is strongly doubling of order $(\beta,p)$.
\end{lemma}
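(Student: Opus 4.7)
The plan is to convert the operator norm bound \eqref{eq:AQ} into a statement about the ratio of the seminorms $\rho_{p,Q}$ and $\rho_{p,P}$, and then use the doubling hypothesis on the scalar measures $w_{\vy}(t):=|W^{1/p}(t)\vy|^p$. By definition of a reducing operator, for every $\vx\in\bR^N$, writing $\vx=A_P^{-1}\vy$,
\begin{equation*}
\|A_QA_P^{-1}\|=\sup_{\vy\neq 0}\frac{|A_Q\vy|}{|A_P\vy|}\asymp \sup_{\vy\neq 0}\frac{\rho_{p,Q}(\vy)}{\rho_{p,P}(\vy)},
\end{equation*}
so it suffices to estimate this last ratio uniformly in $\vy$. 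Since $W$ is doubling of order $p$, each $w_{\vy}$ is a scalar doubling measure with a doubling exponent $\beta$ independent of $\vy$ (cf.\ Remark \ref{rem:doub}), and the task is reduced to a purely geometric/measure-theoretic estimate for a single scalar doubling measure.

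The core geometric step is to enlarge $P=Q(k,m)$ to a cube $\widetilde{P}$ centred at the centre of $P$ that contains $Q=Q(j,\ell)$. Because $Q(k,m)$ has sidelength comparable to $r_k^{-1}$ and contains the point $x_{k,m}$, while $Q(j,\ell)$ has sidelength comparable to $r_j^{-1}$ and contains $x_{j,\ell}$, one can take $\widetilde{P}$ with radius comparable to $r_k^{-1}+r_j^{-1}+|x_{j,\ell}-x_{k,m}|$. Iterating the doubling inequality of Definition \ref{def:doub} then gives
\begin{equation*}
\int_Q w_{\vy}(t)\,dt\leq \int_{\widetilde{P}}w_{\vy}(t)\,dt\leq C\left(\frac{r_k^{-1}+r_j^{-1}+|x_{j,\ell}-x_{k,m}|}{r_k^{-1}}\right)^{\beta}\int_P w_{\vy}(t)\,dt,
\end{equation*}
since the number of dyadic doublings needed to pass from $P$ to $\widetilde{P}$ is logarithmic in the radius ratio, and each doubling costs a factor $2^{\beta}$.

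Dividing by $|Q|\asymp r_j^{-n}$ and $|P|\asymp r_k^{-n}$ gives
\begin{equation*}
\frac{\rho_{p,Q}(\vy)^p}{\rho_{p,P}(\vy)^p}\leq C\left(\frac{r_j}{r_k}\right)^{n}\bigl(r_k/r_j+r_k|x_{j,\ell}-x_{k,m}|+1\bigr)^{\beta},
\end{equation*}
and the verification of \eqref{eq:AQ} is completed by splitting into the cases $r_j\leq r_k$ and $r_j\geq r_k$. When $r_j\leq r_k$, the bracket is controlled by $(r_k/r_j)(1+r_j|x_{j,\ell}-x_{k,m}|)$, yielding the factor $(r_k/r_j)^{(\beta-n)/p}(1+\min(r_j,r_k)|x_{j,\ell}-x_{k,m}|)^{\beta/p}$; when $r_j\geq r_k$, the ratio $r_k/r_j\leq 1$ may be absorbed into a constant, leaving $(r_j/r_k)^{n/p}(1+\min(r_j,r_k)|x_{j,\ell}-x_{k,m}|)^{\beta/p}$. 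Taking the maximum over the two cases is exactly the required bound. The only point requiring care is the bookkeeping between the two cases and the fact that the "centre" of the non-centred cubes $Q(k,\ell)$ differs from $x_{k,\ell}$ by a quantity of order $r_k^{-1}$, which is harmless as it is absorbed into the $1+\min(r_j,r_k)|x_{j,\ell}-x_{k,m}|$ factor.
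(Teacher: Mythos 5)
Your proof is correct and follows essentially the same approach as the paper: both enclose $Q$ in a dilate of $P$, iterate the scalar doubling bound for $w_{\vy}$, substitute $\vy=A_P^{-1}\vz$ via the reducing-operator equivalence, and finish with the $r_j\lessgtr r_k$ case split. The paper encodes the geometric step as a single estimate on the minimal $\alpha$ with $Q\subset\alpha P$, which is just a more compact way of writing your radius ratio $\bigl(r_k^{-1}+r_j^{-1}+|x_{j,\ell}-x_{k,m}|\bigr)/r_k^{-1}$.
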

\begin{proof}
Let $Q=Q(j,\ell), P=Q(k,m)\in\cQ$, and let $\alpha\geq 1$ be a minimal constant for which
$Q\subset \alpha P$. By an elementary geometric estimate, we  have
\begin{equation}\label{eq:alp}
\alpha\leq c \max\bigg\{1,\frac{r_k}{r_j}\bigg\}\big(1+\min\{r_j,r_k\}|x_{j,\ell}-x_{k,m}|\big).
\end{equation}
By the doubling property,
$$w_\vy(Q)\leq w_\vy(\alpha P)\leq c\alpha^\beta w_\vy(P).$$
Hence,
$$|A_Q\vy|^p\leq \frac{1}{|Q|}\int_Q |W^{1/p}(x)\vy|^p\, dx= \frac{1}{|Q|} w_\vy(Q)
\leq c \frac{1}{|Q|}\alpha^\beta w_\vy(P)=
c\frac{|P|}{|Q|}\alpha^\beta |A_P\vy|^p.$$
Now we put $\vy=A_P^{-1}\vz$ for arbitrary $\vz\in\bR^N$, and recall that $|P|\asymp r_k^{-n}$, $|Q|=r_j^{-n}$. Using estimate \eqref{eq:alp} we obtain,
$$|A_QA_P^{-1}\vz|^p\leq c \left(\frac{r_j}{r_k}\right)^n \max\bigg\{1,\frac{r_k}{r_j}\bigg\}^\beta
\big(1+\min\{r_j,r_k\}|x_{j,\ell}-x_{k,m}|\big)^\beta|\vz|^p,$$
and \eqref{eq:AQ} follows.
\end{proof}
We can now define the class of almost diagonal matrices adapted to the vector-valued sequence space $m^{s,\alpha}_{{p},q}(W)$. According to Remark \ref{rem:doub}, the definition in particular applies to the setup where the matrix weight is in $\mathbf{A}_p$. 
\begin{definition}\label{def:mad}
Let     $W$ be a doubling matrix weight of order $p > 0$ with doubling exponent $\beta$ as specified in Definition \ref{def:doub}. Put $K:=\max\big\{\frac{\beta}{p}, \frac{\beta-n}{p}\big\}$. A matrix $\mathbf{A}:=\{a_{(j,\ell)(k,m)}\}_{j,m,k,\ell\in\bZ^d}$
is called almost diagonal on $m^{s,\alpha}_{{p},q}(W)$  if
there exist $J>\frac{n}{\min(1,q)}$, $M>\max\{2J,|s|+n/2\}$,  and  $C>0$  such that
\begin{align}
    |a_{(j,\ell)(k,m)}|&\leq  C\min\bigg\{\bigg(\frac{r_j}{r_k}\bigg)^{M+K},\bigg(\frac{r_k}{r_j}\bigg)^{M+K}\bigg\}(1+\min(r_k,r_j)|x_{k,m}-x_{j,\ell}|)^{-J-\frac{\beta}{p}}\nonumber\\
&\hspace{2cm}\times (1+\max(r_k,r_j)^{-1}|\xi_k-\xi_j|)^{-J},\label{eq:mad_symm}
\end{align} 
with  $r_k$, $\xi_k$, and $x_{k,n}$ defined in Eqs.\ \eqref{eq:rk} and \eqref{eq:xk}.
We denote the set of almost diagonal matrices on $m^{s,\alpha}_{{p},q}$(W)  by $\textbf{ad}_{{p},q}^{\alpha,s}(W)$.

\end{definition}
The following result provides a fundamental boundedness result for matrices in $\textbf{ad}_{{p},q}^{\alpha,s}$ on the vector-valued sequence space $m^{s,\alpha}_{{p},q}(W)$. 
\begin{proposition}\label{prop:di}
Let $\alpha\in [0,1]$, $s\in \bR$, $1\leq p<\infty$, and $0<q\leq \infty$, and let $W\in\mathbf{A}_p$
with order $p$ doubling exponent $\beta$ as specified in Definition \ref{def:doub}. Let $\{A_Q\}_{Q\in\cD}$ be a family of reducing operators associated with $W$. Suppose $\mathbf{B}:=\{b_{(j,m)(k,n)}\}_{j,m,k,\ell\in\bZ^d}$ is almost diagonal with $\mathbf{B}\in \textbf{ad}^{\alpha,s}_{p,q}(J)$. Then $\mathbf{B}$ is bounded on $m^{\alpha,s}_{p,q}(W)$.
\end{proposition}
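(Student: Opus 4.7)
The plan is to reduce the vector-valued boundedness to a scalar boundedness statement by exploiting the reducing operators $\{A_Q\}_{Q\in\cQ}$ associated with $W$ and then invoking the scalar result Proposition \ref{johnlennon}. Concretely, by the equivalence in Lemma \ref{le:eqi} together with the identity \eqref{eq:connect}, it suffices to show that whenever $\vt:=\mathbf{B}\vs$ for a finite sequence $\vs$, the scalar sequences $\tilde{t}_{j,\ell}:=|A_{Q(j,\ell)}\vt_{j,\ell}|$ and $\tilde{s}_{k,m}:=|A_{Q(k,m)}\vs_{k,m}|$ satisfy $\|\tilde{t}\|_{m^{\alpha,s}_{p,q}}\leq C\|\tilde{s}\|_{m^{\alpha,s}_{p,q}}$.

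The triangle inequality together with the trivial factorisation $A_{Q(j,\ell)}\vs_{k,m}=A_{Q(j,\ell)}A_{Q(k,m)}^{-1}A_{Q(k,m)}\vs_{k,m}$ yields the pointwise estimate $\tilde{t}_{j,\ell}\leq \sum_{k,m}\tilde{b}_{(j,\ell)(k,m)}\tilde{s}_{k,m}$, with
$$\tilde{b}_{(j,\ell)(k,m)}:=|b_{(j,\ell)(k,m)}|\,\|A_{Q(j,\ell)}A_{Q(k,m)}^{-1}\|.$$
Since the scalar norm in \eqref{eq:scalar} is monotone in the absolute values of its entries, the reduction is complete once I have shown that the scalar matrix $\tilde{\mathbf{B}}:=\{\tilde{b}_{(j,\ell)(k,m)}\}$ lies in the scalar class $\textrm{ad}^{\alpha,s}_{p,q}$ of Definition \ref{doitmad}, at which point Proposition \ref{johnlennon} applies.

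The core technical step is therefore the verification of the scalar almost diagonal bound for $\tilde{\mathbf{B}}$. Because $W\in\mathbf{A}_p$ is doubling of order $p$ by Remark \ref{rem:doub}, Lemma \ref{le:dou} supplies the strong doubling estimate on $\|A_{Q(j,\ell)}A_{Q(k,m)}^{-1}\|$. Multiplying it against the vector-valued bound on $|b_{(j,\ell)(k,m)}|$ from Definition \ref{def:mad}, the positional factor $(1+\min\{r_j,r_k\}|x_{j,\ell}-x_{k,m}|)^{\beta/p}$ from the doubling estimate is absorbed by the oversized decay exponent $-J-\beta/p$, and a short case analysis depending on whether $r_j\leq r_k$ shows that the choice $K=\max\{\beta/p,(\beta-n)/p\}$ built into Definition \ref{def:mad} is precisely what is needed for
$$\min\bigl\{(r_j/r_k)^{M+K},(r_k/r_j)^{M+K}\bigr\}\cdot\max\bigl\{(r_j/r_k)^{n/p},(r_k/r_j)^{(\beta-n)/p}\bigr\}\leq \min\bigl\{(r_j/r_k)^{M},(r_k/r_j)^{M}\bigr\}.$$
The frequency decay factor $(1+\max(r_j,r_k)^{-1}|\xi_j-\xi_k|)^{-J}$ transfers directly, so $\tilde{\mathbf{B}}$ satisfies the symmetric sufficient condition of Remark \ref{rem:ad} with parameters $J$ and $M$ inherited from those on $\mathbf{B}$, placing it in $\textrm{ad}^{\alpha,s}_{p,q}$.

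The main obstacle is the exponent bookkeeping in the display above: everything hinges on $K$ being chosen large enough to absorb the polynomial blow-up of $\|A_QA_P^{-1}\|$ across differing scales, and on the doubling exponent $\beta$ governing the positional growth matching the $\beta/p$ cushion already built into the positional decay factor in Definition \ref{def:mad}. Once this matching is carried out, Proposition \ref{johnlennon} delivers the boundedness of $\tilde{\mathbf{B}}$ on $m^{\alpha,s}_{p,q}$, and unwinding the reduction through \eqref{eq:connect} and Lemma \ref{le:eqi}, together with a standard truncation argument to pass from finite sequences to general elements of $m^{\alpha,s}_{p,q}(W)$, transfers the bound back to the matrix-weighted setting.
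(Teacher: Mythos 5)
Your proof is correct and follows essentially the same route as the paper's: factor $A_{Q(j,\ell)}\vs_{k,m}=A_{Q(j,\ell)}A_{Q(k,m)}^{-1}A_{Q(k,m)}\vs_{k,m}$, dominate the resulting scalar matrix $\tilde{\mathbf B}$ using Lemma~\ref{le:dou} together with the extra decay built into Definition~\ref{def:mad}, pass to the scalar class via Lemma~\ref{le:eqi} and \eqref{eq:connect}, and apply Proposition~\ref{johnlennon}. One small imprecision: the case analysis actually requires $K\ge\max\{n/p,(\beta-n)/p\}$ rather than $K$ being ``precisely'' $\max\{\beta/p,(\beta-n)/p\}=\beta/p$; the latter suffices because $\mathbf A_p$ weights have doubling exponent $\beta\ge n$, but it is not the sharp threshold.
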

\begin{proof}
Let $\vs\in m^{\alpha,s}_{p,q}(W)$ be a finite sequence, and put $\vt=\mathbf{B}\vs$, i.e.,
$$\vt_{(j,\ell)}:=\sum_{(k,m)\in\bZ^n\times\bZ^n} b_{(j,\ell)(k,m)}\vs_{(k,m)},\qquad (j,\ell)\in \bZ^n\times\bZ^n.$$
There are no convergence issues due to the fact that $\vs$ is finite.
Now we define an associated scalar sequence $t=(t_Q)_{Q\in\cD}$ by letting 
$t_{(j,\ell)}=|A_{Q(j,\ell)}\vt_{(j,\ell)}|$. Then we notice that
\begin{align*}
    t_{(j,\ell)}&=|A_{Q(j,\ell)}\vt_{(j,\ell)}|\\
    &=\bigg|A_{Q(j,\ell)}\sum_{(k,m)\in\bZ^n\times\bZ^n} b_{(j,\ell)(k,m)}\vs_{(k,m)}\bigg|\\
    &\leq \sum_{(k,m)\in\bZ^n\times\bZ^n} |b_{(j,\ell)(k,m)}|\cdot|A_{Q(j,\ell)}\vs_{(k,m)}|\\
    &\leq  \sum_{(k,m)\in\bZ^n\times\bZ^n} |b_{(j,\ell)(k,m)}|\cdot\|A_{Q(j,\ell)}A_{Q(k,m)}^{-1}\||A_{Q(k,m)}\vs_{(k,m)}|\\
    &=\sum_{(k,m)\in\bZ^n\times\bZ^n} \gamma_{(j,\ell)(k,m)} s_{(k,m)}, 
\end{align*}
with $s_{(k,m)}:=|A_{Q(k,m)}\vs_{(k,m)}|$ and $\gamma_{(j,\ell)(k,m)}:=|b_{(j,\ell)(k,m)}|\|A_{Q(j,\ell)}A_{Q(k,m)}^{-1}\|$. Hence, using the observation in Eq.\ \eqref{eq:connect}, we have
$$\|\{t_{(j,m)}\}_{(j,m)}\|_{m^{\alpha,s}_{p,q}}=\|\{\vt_{(j,m)}\}_{(j,m)}\|_{m^{\alpha,s}_{p,q}(\{A_Q\})}\asymp \|\{\vt_{(j,m)}\}_{(j,m)}\|_{m^{\alpha,s}_{p,q}(W)},$$
and 
$$\|\{s_{(j,m)}\}_{(j,m)}\|_{m^{\alpha,s}_{p,q}}=\|\{\vs_{(j,m)}\}_{(j,m)}\|_{m^{\alpha,s}_{p,q}(\{A_Q\})}\asymp \|\{\vs_{(j,m)}\}_{(j,m)}\|_{m^{\alpha,s}_{p,q}(W)},$$
where we have used Lemma \ref{le:eqi} for the equivalence. Therefore, to prove the wanted boundedness result, it suffice to verify that
$\Gamma:=(\gamma_{(j,m)(k,\ell)})\in\mathrm{ad}_{p,q}^{\alpha,s}$ since, in the scalar setting, an almost diagonal matrix for $m^{\alpha,s}_{p,q}$ will map $m^{\alpha,s}_{p,q}$ boundedly into $m^{\alpha,s}_{p,q}$ according to Proposition \ref{johnlennon}. We notice that the estimate by Lemma \ref{le:dou}, and the almost diagonal assumption on $\mathbf{B}$ given by \eqref{eq:mad_symm}, ensure that
there exists some $J>\frac{n}{\min(1,q)}$ and $M>\max\{2J,|s|+n/2\}$ such that,
\begin{align*}
    |\gamma_{(j,m)(k,n)}|&\leq  C\min\bigg\{\bigg(\frac{r_j}{r_k}\bigg)^{M},\bigg(\frac{r_k}{r_j}\bigg)^{M}\bigg\}(1+\min(r_k,r_j)|x_{k,n}-x_{j,m}|)^{-J}\nonumber\\
&\hspace{2cm}\times (1+\max(r_k,r_j)^{-1}|\xi_k-\xi_j|)^{-J},
\end{align*}   
and as noticed in Remark \ref{rem:ad}, this implies that $\Gamma\in\mathrm{ad}_{p,q}^{\alpha,s}$. 
\end{proof}

Let us now turn to a first useful application of Proposition \ref{prop:di} to study ``change of frame'' operators. As we will see in Corollaries  \ref{cor:recon} and \ref{cor:coeff} below, we can use the ``change of frame'' operators to extend Propositions \ref{prop:coef} and \ref{prop:recon} to cover much more general expansion system.

 We take  $0\leq \alpha<1$ and let
$\{\phi_{k,n}\}_{k,n\in \bZ^n}$ be the tight frame defined in
\eqref{eq:muk} for the chosen $\alpha\in [0,1)$. It can easily be verified that for any fixed $N,P,L>0$, $\phi_{k,n}$
has the following decay in direct and frequency space,
\begin{align}
&|\phi_{k,m}(x)|\le Cr_k^{\frac{n}{2}}(1+r_k|x_{k,m}-x|)^{-2N}\label{one},\\
&|\hat{\phi}_{k,m}(\xi)|\le C
r_k^{-\frac{n}{2}}(1+r_k^{-1}|\xi_{k}-\xi|)^{-2L-2\tfrac{\alpha}{1-\alpha}P},\label{three}
\end{align}
where $C$ is independent of $k$ and $m$, and as before,
\begin{equation}\label{dotodotodoto}
x_{k,m}=\frac{\pi}{a}r_k^{-1}m,\,\, k,m\in\bZ^n,
\end{equation}
with $r_k$  defined in \eqref{eq:rk}. Let
$\{\psi_{k,n}\}_{k,n\in Z^d}\subset L_2(\R^n)$ be another system with similar decay properties,
\begin{align}
&|\psi_{j,m}(x)|\le Cr_j^{\frac{n}{2}}(1+r_j|x_{j,m}-x|)^{-2N}\label{two},\\
&|\hat{\psi}_{j,m}(\xi)|\le C
r_j^{-\frac{n}{2}}(1+r_j^{-1}|\xi_{j}-\xi|)^{-2L-2\tfrac{\alpha}{1-\alpha}{P}}.\label{four}
\end{align}

The following lemma was proved in \cite{Nielsen2012}.
\begin{lemma}\label{bubbleboy}%
Let $0\leq \alpha<1$. Choose $N,P,L>0$ such that $2N>n$ and $2L+2\tfrac{\alpha}{1-\alpha}\frac{P-n}2>n$. If both systems $\{\eta_{k,n}\}_{k,n\in\bZ^n}$  and $\{\psi_{j,m}\}_{j,m\in\bZ^n}$ satisfy \eqref{two} and \eqref{four}, we have
\begin{align*}
|\langle\eta_{k,n},\psi_{j,m}\rangle|
\le& C
\min\bigg(\frac{r_k}{r_j},\frac{r_j}{r_k}\bigg)^{P}(1+\max(r_k,r_j)^{-1}|\xi_k-\xi_j|)^{-L}\\
&\phantom{C} \times(1+\min(r_k,r_j)|x_{k,n}-x_{j,m}|)^{-N}.
\end{align*}
\end{lemma}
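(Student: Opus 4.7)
The plan is a standard almost-orthogonality argument: estimate $I:=\langle\eta_{k,n},\psi_{j,m}\rangle$ in two complementary ways---once in physical space using \eqref{two} and once on the Fourier side via Plancherel and \eqref{four}---combine the two bounds by taking a geometric mean, and finally invoke a scaling relation intrinsic to $\alpha$-coverings to convert the excess frequency decay into the scale-ratio factor $\min(r_k/r_j,r_j/r_k)^{P}$.

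By the symmetry of both the hypotheses and the target estimate under the interchange $(\eta,k,n)\leftrightarrow(\psi,j,m)$, I may assume $r_k\le r_j$, so that $\min(r_k,r_j)=r_k$ and $\max(r_k,r_j)=r_j$. Applying \eqref{two} to each factor of the inner product and using the routine convolution-type estimate
\[
\int_{\bR^n}(1+r_k|x-x_{k,n}|)^{-2N}(1+r_j|x-x_{j,m}|)^{-2N}\,dx\le Cr_j^{-n}(1+r_k|x_{k,n}-x_{j,m}|)^{-2N},
\]
valid for $2N>n$ (via splitting $\bR^n$ along the perpendicular bisector of $[x_{k,n},x_{j,m}]$ and integrating the narrower bump of width $r_j^{-1}$), yields
\[
|I|\le C(r_k/r_j)^{n/2}(1+r_k|x_{k,n}-x_{j,m}|)^{-2N}. \qquad(\mathrm{I})
\]
Plancherel's identity and \eqref{four} give, by the analogous estimate in the frequency variable (the narrower bump now living around $\xi_k$ since $r_k^{-1}\ge r_j^{-1}$),
\[
|I|\le C(r_k/r_j)^{n/2}\bigl(1+r_j^{-1}|\xi_k-\xi_j|\bigr)^{-\bigl(2L+\tfrac{2\alpha P}{1-\alpha}\bigr)}, \qquad(\mathrm{II})
\]
the integrability being covered precisely by the stated hypothesis on $L,P$. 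Taking the geometric mean of $(\mathrm{I})$ and $(\mathrm{II})$ produces
\[
|I|\le C(r_k/r_j)^{n/2}(1+r_k|x_{k,n}-x_{j,m}|)^{-N}\bigl(1+r_j^{-1}|\xi_k-\xi_j|\bigr)^{-\bigl(L+\tfrac{\alpha P}{1-\alpha}\bigr)}.
\]

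The final ingredient is a scaling relation intrinsic to the $\alpha$-covering: from $r_k=\langle k\rangle^{\alpha/(1-\alpha)}$, $\xi_k=kr_k$, and the elementary Peetre-type bound $r_j\lesssim r_k+|\xi_k-\xi_j|^{\alpha}$ (which follows from $r_\ell\asymp\langle\xi_\ell\rangle^\alpha$), a short calculation with $T:=r_j/r_k$ and $S:=r_j^{-1}|\xi_k-\xi_j|$ yields $T\lesssim 1+(TS)^\alpha$, hence
\[
r_j/r_k\le C\bigl(1+r_j^{-1}|\xi_k-\xi_j|\bigr)^{\alpha/(1-\alpha)},
\qquad\text{i.e.,}\qquad
\bigl(1+r_j^{-1}|\xi_k-\xi_j|\bigr)^{-\alpha P/(1-\alpha)}\le C(r_k/r_j)^{P}.
\]
Absorbing this into the geometric-mean bound and using the trivial $(r_k/r_j)^{n/2}\le 1$ (since $r_k\le r_j$) to discard the leftover prefactor yields the claimed inequality. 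The main obstacle is the scaling relation in the last step: it is what dictates the stated hypothesis on $N,P,L$ and determines precisely how much of the frequency decay must be reserved to manufacture the prefactor $(r_k/r_j)^{P}$. Everything else is a routine combination of Peetre-type estimates.
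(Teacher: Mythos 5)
Your argument is correct and follows the standard almost-orthogonality route for this type of lemma: estimate the inner product once in physical space via \eqref{two}, once on the Fourier side via Plancherel and \eqref{four}, take the geometric mean, and convert the surplus frequency decay into the scale-ratio factor via $r_j/r_k \lesssim (1+r_j^{-1}|\xi_k-\xi_j|)^{\alpha/(1-\alpha)}$. The individual steps check out: the two Peetre-type convolution estimates (needing $2N>n$ on the spatial side and $2L+\tfrac{2\alpha P}{1-\alpha}>n$, implied by the stated hypothesis, on the Fourier side), the geometric mean, and the elementary inequality $T\lesssim 1+(TS)^\alpha$, where one also uses $r_k\geq 1$ to absorb the stray factor $r_k^{\alpha-1}$. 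The paper does not reprove this lemma but cites \cite{Nielsen2012}, so a line-by-line comparison is not available; however, your route is the expected one and there is no indication the cited proof differs in spirit. Two small remarks. First, the Fourier-side integrability strictly needs only $2L+\tfrac{2\alpha P}{1-\alpha}>n$, and the stated hypothesis $2L+\tfrac{\alpha(P-n)}{1-\alpha}>n$ is in fact somewhat stronger, so ``covered precisely'' overstates the match -- harmless, since a sufficient condition is all you need. Second, the bound $r_j\lesssim r_k+|\xi_k-\xi_j|^\alpha$ deserves one line of justification: it follows from $r_\ell\asymp\langle\xi_\ell\rangle^\alpha$, the quasi-triangle inequality for $\langle\cdot\rangle$, and the subadditivity of $t\mapsto t^\alpha$ on $[0,\infty)$ for $\alpha\in[0,1]$.
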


The lemma can be applied to obtain the following result reconstruction bound for any system of "molecules" with the same general structure as the frame $\{\phi_{k,n}\}_{k,n\in \bZ^n}$.

\begin{corollary}\label{cor:recon}
Let $\alpha\in [0,1)$, $s\in \bR$, $1\leq p<\infty$, and $0<q\leq \infty$, and let $W\in\mathbf{A}_p$
with order $p$ doubling exponent $\beta$ as specified in Definition \ref{def:doub}.
Put $K:=\max\big\{\frac{\beta}{p}, \frac{\beta-n}{p}\big\}$ and choose $N,P,L>0$ such that 
$2N>n$ and $2L+2\tfrac{\alpha}{1-\alpha}\frac{P-n}2>n$, and, additionally,
\begin{align*}
     \min\{L,N\}>\frac{n}{\min(1,q)}+\frac{\beta}{p},\qquad P>K+\max\bigg\{
     \frac{2n}{\min(1,q)},|s|+\frac{n}{2}\bigg\}.
\end{align*} 
  If the system   $\{\psi_{j,m}\}_{j,m\in\bZ^n}\subset L_2(\R^n)$ satisfy \eqref{two} and \eqref{four} with parameters $N,P,L$ as specified, then there exists a constant $C$ such that for any finite vector-valued coefficient sequence  $\vs:=\{\vc_{j,\ell}\}_{(j,\ell)\in F}$, $F\subset \bZ^n\times\bZ^n$,
  \begin{equation}
      \bigg\|\sum_{(j,\ell)\in F}\vc_{j,\ell}\psi_{j,\ell}\bigg\|_{M_{p,q}^{\alpha,s}(W)}\leq C \|\{\vc_{j,\ell}\}\|_{m_{p,q}^{\alpha,s}(W)}.
  \end{equation}
\end{corollary}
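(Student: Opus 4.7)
The plan is to use the tight frame $\Phi=\{\phi_{k,m}\}$ of Section \ref{sec:ms} as an intermediate analysis tool. Put $f:=\sum_{(j,\ell)\in F}\vc_{j,\ell}\psi_{j,\ell}$, which lies in $L^2(\Rn)$ since the sum is finite and each $\psi_{j,\ell}\in L^2(\Rn)$ by hypothesis. By the tight frame identity \eqref{eq:tf} applied coordinate-wise, $f=\sum_{k,m}\vt_{k,m}\phi_{k,m}$ in $L^2(\Rn)$ with
$$\vt_{k,m}:=\langle f,\phi_{k,m}\rangle=\sum_{(j,\ell)\in F}\langle\psi_{j,\ell},\phi_{k,m}\rangle\vc_{j,\ell}.$$
Setting $b_{(k,m)(j,\ell)}:=\langle\psi_{j,\ell},\phi_{k,m}\rangle$ we obtain $\vt=\mathbf{B}\vc$, where the scalar matrix $\mathbf{B}$ acts on vector sequences componentwise. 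The strategy then has two steps: (i) show $\mathbf{B}\in\textbf{ad}^{\alpha,s}_{p,q}(W)$ so that Proposition \ref{prop:di} yields $\|\vt\|_{m^{\alpha,s}_{p,q}(W)}\lesssim\|\vc\|_{m^{\alpha,s}_{p,q}(W)}$; and (ii) combine this with Proposition \ref{prop:recon} applied to the tight-frame synthesis of $f$.

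For step (i), the prescribed parameters $N,P,L$ make the hypotheses of Lemma \ref{bubbleboy} hold, yielding
$$|b_{(k,m)(j,\ell)}|\le C\min\bigg(\frac{r_k}{r_j},\frac{r_j}{r_k}\bigg)^{P}(1+\max(r_k,r_j)^{-1}|\xi_k-\xi_j|)^{-L}(1+\min(r_k,r_j)|x_{k,m}-x_{j,\ell}|)^{-N}.$$
This must be matched against the almost-diagonal bound \eqref{eq:mad_symm}. Pick $J$ slightly larger than $n/\min(1,q)$; the hypothesis $\min\{L,N\}>n/\min(1,q)+\beta/p$ then gives both $L\ge J$ and $N\ge J+\beta/p$, which handle the frequency and spatial polynomial factors respectively. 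Next pick $M$ slightly larger than $\max\{2J,|s|+n/2\}$; the hypothesis $P>K+\max\{2n/\min(1,q),|s|+n/2\}$ leaves enough room to arrange $M+K\le P$, which absorbs the $\min(r_k/r_j,r_j/r_k)$ factor. Hence $\mathbf{B}\in\textbf{ad}^{\alpha,s}_{p,q}(W)$ and Proposition \ref{prop:di} applies. This parameter bookkeeping is the most delicate step, but it is essentially mechanical given the way the hypotheses are tailored.

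For step (ii), applying Proposition \ref{prop:recon} to any finite truncation $f_L:=\sum_{|k|+|m|\le L}\vt_{k,m}\phi_{k,m}$ gives $\|f_L\|_{M^{\alpha,s}_{p,q}(W)}\le C\|\vt\|_{m^{\alpha,s}_{p,q}(W)}$ uniformly in $L$, and the same proposition applied to tails shows that $\{f_L\}$ is Cauchy in $M^{\alpha,s}_{p,q}(W)$. Since $f_L\to f$ in $L^2(\Rn)$, hence in $\bigoplus_{j=1}^N\cS'(\Rn)$, the embedding from Proposition \ref{prop:complete}(a) forces the $M^{\alpha,s}_{p,q}(W)$-limit of $\{f_L\}$ to coincide with $f$. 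Passing to the limit therefore yields
$$\|f\|_{M^{\alpha,s}_{p,q}(W)}\le C\|\vt\|_{m^{\alpha,s}_{p,q}(W)}\le C'\|\vc\|_{m^{\alpha,s}_{p,q}(W)},$$
which is the advertised bound.
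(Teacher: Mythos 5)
Your proof is correct and follows the same route as the paper: expand $\vf$ in the canonical tight frame $\{\phi_{k,m}\}$, identify the change-of-frame matrix $\mathbf{B}=(\langle\psi_{j,\ell},\phi_{k,m}\rangle)$ as almost diagonal, and combine Proposition \ref{prop:di} with Proposition \ref{prop:recon}. You are in fact slightly more careful than the paper on two points: you spell out the verification that $\mathbf{B}\in\textbf{ad}^{\alpha,s}_{p,q}(W)$ via Lemma \ref{bubbleboy} and the parameter hypotheses, and you add the truncation/limiting argument that is needed because Proposition \ref{prop:recon} is stated only for finite coefficient sequences while $\mathbf{B}\vs$ is in general an infinite sequence -- a gap the paper's one-line application glosses over.
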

\begin{proof}
We expand $\vf:=\sum_{(j,\ell)\in F}\vc_{j,\ell}\psi_{j,\ell}$ in the canonical system $\{\phi_{j,\ell}\}$. This yields
$$\vf=\sum_{(k,m)\in \bZ^n\times \bZ^n} (\mathbf{B}\vs)_{(j,m)}\phi_{k,m},$$
with $$\mathbf{B}=(\langle \psi_{j,\ell},\phi_{k,n}\rangle)_{(j,\ell)(k,n)}.$$
By Proposition \ref{prop:di}, $\|\mathbf{B}\vs\|_{m^{\alpha,s}_{p,q}(W)}\leq C_1\|\vs\|_{m^{\alpha,s}_{p,q}(W)}$, and it follows by Proposition \ref{prop:recon} that
$$\|\vf\|_{M^{\alpha,s}_{p,q}(W)}\leq C_2\|\mathbf{B}\vs\|_{m^{\alpha,s}_{p,q}(W)}\leq C_1C_2\|\vs\|_{m^{\alpha,s}_{p,q}(W)}.$$
\end{proof}

 Using a similar type of argument, we can also obtain an estimate for the analysis/coefficient operator for any system with the same general structure as the frame $\{\phi_{k,n}\}_{k,n\in \bZ^n}$.
\begin{corollary}\label{cor:coeff}
Let $\alpha\in [0,1)$, $s\in \bR$, $1\leq p<\infty$, and $0<q\leq \infty$, and let $W\in\mathbf{A}_p$
with order $p$ doubling exponent $\beta$ as specified in Definition \ref{def:doub}.
Put $K:=\max\big\{\frac{\beta}{p}, \frac{\beta-n}{p}\big\}$ and choose $N,P,L>0$ such that 
$2N>n$ and $2L+2\tfrac{\alpha}{1-\alpha}\frac{P-n}2>n$, and, additionally,
\begin{align*}
     \min\{L,N\}>\frac{n}{\min(1,q)}+\frac{\beta}{p},\qquad P>K+\max\bigg\{
     \frac{2n}{\min(1,q)},|s|+\frac{n}{2}\bigg\}.
\end{align*} 
  If the system   $\{\psi_{j,m}\}_{j,m\in\bZ^n}\subset L_2(\R^n)$ satisfy \eqref{two} and \eqref{four} with parameters $N,P,L$ as specified, then we have 
 Then for $\vf\in M_{p,q}^{\alpha,s}(W)$,
  \begin{equation}
      \|\{\vc_{k,\ell}\}_{k,\ell}\|_{m_{p,q}^{\alpha,s}(W)}\leq C \|\vf\|_{M_{p,q}^{\alpha,s}(W)},
  \end{equation}
  with $\vc_{k,\ell}:=\langle \vf,\psi_{k,\ell}\rangle$.
\end{corollary}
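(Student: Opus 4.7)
The plan is to dualize the argument used in the proof of Corollary~\ref{cor:recon}: instead of expanding a molecular sum in the canonical frame, I will expand $\vf$ itself in the canonical frame first and then pair each term against $\psi_{j,\ell}$. Assume first that $\vf\in\bigoplus_{j=1}^N\dS$, so that every pairing and interchange below is legitimate. The tight frame identity~\eqref{eq:tf}, applied coordinate-wise, together with $L^2$-convergence of the series gives
\begin{equation*}
\langle \vf,\psi_{j,\ell}\rangle = \sum_{(k,n)\in\Zn\times\Zn}\langle\vf,\phi_{k,n}\rangle\,\langle\phi_{k,n},\psi_{j,\ell}\rangle = (\mathbf{B}\vc)_{(j,\ell)},
\end{equation*}
where $\vc_{(k,n)}:=\langle\vf,\phi_{k,n}\rangle$ and $\mathbf{B}:=(\langle\phi_{k,n},\psi_{j,\ell}\rangle)_{(j,\ell)(k,n)}$ acts coordinate-wise on vector-valued sequences.

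The core of the argument is to verify that $\mathbf{B}\in\textbf{ad}^{\alpha,s}_{p,q}(W)$. The canonical frame satisfies~\eqref{one} and~\eqref{three} with any chosen parameters, while by hypothesis $\{\psi_{j,\ell}\}$ satisfies~\eqref{two} and~\eqref{four} with $N,P,L$ obeying $2N>n$ and $2L+2\tfrac{\alpha}{1-\alpha}\tfrac{P-n}{2}>n$. Lemma~\ref{bubbleboy} therefore yields
\begin{equation*}
|\langle\phi_{k,n},\psi_{j,\ell}\rangle|\leq C\min\!\bigg(\frac{r_k}{r_j},\frac{r_j}{r_k}\bigg)^{\!P}\big(1+\max(r_k,r_j)^{-1}|\xi_k-\xi_j|\big)^{-L}\big(1+\min(r_k,r_j)|x_{k,n}-x_{j,\ell}|\big)^{-N}.
\end{equation*}
Matching this against the almost diagonal condition~\eqref{eq:mad_symm}, pick $J$ slightly larger than $n/\min(1,q)$ so that $J<L$, $J+\beta/p<N$, and $2J<P-K$; this is possible precisely because of the standing assumptions $\min\{L,N\}>n/\min(1,q)+\beta/p$ and $P>K+\max\{2n/\min(1,q),|s|+n/2\}$. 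Setting $M:=P-K$ then forces $M>\max\{2J,|s|+n/2\}$, and the bound fits into~\eqref{eq:mad_symm}, so $\mathbf{B}\in\textbf{ad}^{\alpha,s}_{p,q}(W)$. Proposition~\ref{prop:di} followed by Proposition~\ref{prop:coef} then chains to
\begin{equation*}
\|\{\langle\vf,\psi_{j,\ell}\rangle\}\|_{m^{\alpha,s}_{p,q}(W)} \leq C_1\|\vc\|_{m^{\alpha,s}_{p,q}(W)} \leq C_1C_2\|\vf\|_{M^{\alpha,s}_{p,q}(W)}.
\end{equation*}

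The main technical obstacle is extending this estimate from $\vf\in\bigoplus_{j=1}^N\dS$ to a general $\vf\in M^{\alpha,s}_{p,q}(W)$, since~\eqref{eq:tf} is a priori available only in $L^2(\Rn)$. For $q<\infty$, Remark~\ref{rem:dense} furnishes a Schwartz approximation $\vf_m\to \vf$ in $M^{\alpha,s}_{p,q}(W)$; the inequality just proved makes $\{\langle\vf_m,\psi_{j,\ell}\rangle\}_{j,\ell}$ Cauchy in $m^{\alpha,s}_{p,q}(W)$, and its limit is identified with $\{\langle\vf,\psi_{j,\ell}\rangle\}_{j,\ell}$ through continuity of the distributional pairing against $\psi_{j,\ell}\in\dS$ granted by Proposition~\ref{prop:complete}.(a). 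The endpoint $q=\infty$ requires a separate argument: one truncates the canonical coefficient sequence $\vc$ to finitely supported approximants, applies the Schwartz version of the estimate to the corresponding band-limited partial sums, and passes to the supremum over $k$ using the uniform control afforded by Proposition~\ref{prop:di}.
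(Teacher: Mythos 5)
Your argument follows essentially the same route as the paper: both expand $\vf$ in the canonical tight frame, recognise that the coefficients $\vc_{k,\ell}=\langle\vf,\psi_{k,\ell}\rangle$ are obtained by applying the Gram-type matrix $\mathbf{B}=(\langle\phi_{j,m},\psi_{k,\ell}\rangle)$ to $\vs=\{\langle\vf,\phi_{j,m}\rangle\}$, and then invoke Proposition~\ref{prop:di} together with Proposition~\ref{prop:coef}. Your spelling out of the choice of $J$ and $M$ in \eqref{eq:mad_symm} — in particular $M:=P-K$ and $J$ just above $n/\min(1,q)$ — matches what the parameter hypotheses on $L,N,P$ are designed to guarantee, and is a welcome bit of explicitness that the paper leaves implicit. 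The paper's proof is terser in one further respect: it uses the series representation $\vf=\sum \vs_{j,m}\phi_{j,m}$ directly and interchanges the pairing with $\psi_{k,\ell}$ without comment, whereas you first establish the estimate for $\vf\in\bigoplus\dS$ and then extend by density via Remark~\ref{rem:dense}, flagging $q=\infty$ as needing separate treatment. One small caveat in your extension step: you appeal to ``continuity of the distributional pairing against $\psi_{j,\ell}\in\dS$,'' but \eqref{two} and \eqref{four} supply only size estimates for $\psi_{j,\ell}$ and $\hat\psi_{j,\ell}$, not Schwartz regularity, so $\psi_{j,\ell}\in\dS$ is not part of the hypotheses. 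This does not invalidate the argument — one needs \emph{some} justification that $\langle\vf,\psi_{j,\ell}\rangle$ is defined and continuous for general $\vf$ in the space, a point the paper itself leaves tacit — but the claim should be phrased in terms of the decay actually assumed rather than Schwartz membership.
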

\begin{proof}
 By Proposition \ref{prop:coef}, there exists $C_1$ such that for  $\vf\in M_{p,q}^{\alpha,s}(W)$,
 $$ \|\{\vs_{k,\ell}\}_{k,\ell}\|_{m_{p,q}^{\alpha,s}(W)}\leq C_1 \|\vf\|_{M_{p,q}^{\alpha,s}(W)},$$
 with $\vs_{k,\ell}:=\langle \vf,\phi_{k,\ell}\rangle$. We also notice that 
 $$\vf=\sum_{(j,m)\in\bZ^n\times\bZ^n}\vs_{j,m}\phi_{j,m}.$$
 We use this representation of $\vf$ to calculate
$$\vc_{k,\ell}:=\langle \vf,\psi_{k,\ell}\rangle,$$
where we obtain $\vc=\mathbf{B}\vs$, with $$\mathbf{B}=(\langle \phi_{j,m},\psi_{k,\ell}\rangle)_{(j,m)(k,\ell)}.$$
 Hence, by Proposition \ref{prop:di},
 $$\|\{\vc_{k,\ell}\}_{k,\ell}\|_{m_{p,q}^{\alpha,s}(W)}\leq C_2 \|\{\vs_{k,\ell}\}_{k,\ell}\|_{m_{p,q}^{\alpha,s}(W)}\leq C_1C_2 \|\vf\|_{M_{p,q}^{\alpha,s}(W)}.$$
\end{proof}

\section{An application to Fourier multipliers}\label{sec:5}
One key selling point of Definition \ref{def:mad} is that new almost-diagonal class $\textbf{ad}_{{p},q}^{\alpha,s}(W)$ is fully compatible with the already known scalar class
${ad}_{{p},q}^{\alpha,s}$ up to a simple $W$-dependent decay modification specified by the doubling exponent $\beta$ --  at least if we rely on the symmetric version of ${ad}_{{p},q}^{\alpha,s}$ discussed in Remark \ref{rem:ad}.

In particular, calling on Proposition \ref{prop:di}, {\em any} scalar boundedness result relying on the symmetric version of the almost diagonal class  ${ad}_{{p},q}^{\alpha,s}$ will have a simple modification to the matrix valued setting for weights in $\mathbf{A}_p$. The specifics of the modification will depend only on the doubling exponent $\beta$ of the matrix-weight. Let us consider an application to Fourier multipliers.
\subsection{Fourier Multipliers}
 Let $1\leq p<\infty$, $0<q<\infty$, and  fix $\alpha\in [0,1]$.
Suppose that $W\in \mathbf{A}_p$. For $m$  a bounded measurable function on $\Rn$, we can define the associated Fourier multiplier as the operator
$$m(D)f = \mathcal{F}^{-1}(m\hat{f}),$$
which is initially defined and bounded on $L^2(\Rn)$. We may extend $m(D)$ to the vector-setup by letting $m(D)$ act coordinate-wise.  We also notice that $m(D)$ is then defined on a dense subset of $M_{p,q}^{\alpha,s}(W)$, c.f.\ Remark \ref{rem:dense}. We have the following result.
\begin{proposition}
Let $1\leq p<\infty$, $0<q<\infty$, $\alpha\in [0,1)$, and suppose that $W\in \mathbf{A}_p$. Fix $b\in\bR$. Assume that the multiplier function $m:\bR^n\rightarrow \bC$ satisfies the smoothness condition
$$\sup_{\xi\in\Rn}\langle \xi\rangle^{\alpha|\eta|-b}|\partial^\eta m(\xi)|<\infty,$$
for every multi-index $\eta\in (\bN\cup\{0\})^n$.  Then
\begin{equation}\label{eq:bdd}
m(D):M_{p,q}^{\alpha,s+b}(W)\rightarrow M_{p,q}^{\alpha,s}(W).    
\end{equation}
\end{proposition}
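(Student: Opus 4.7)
The plan is to linearize the multiplier via the tight frame $\{\phi_{k,\ell}\}$ of Section \ref{sec:ms} and reduce the operator bound to an almost-diagonal matrix estimate on the coefficient side, in the spirit of Corollaries \ref{cor:recon} and \ref{cor:coeff}. Take $\vf\in M^{\alpha,s+b}_{p,q}(W)$. By Remark \ref{rem:dense} and a density argument, it suffices to treat $\vf\in\bigoplus_{j=1}^N\dS$, so that $\vf=\sum_{k,\ell}\vc_{k,\ell}\phi_{k,\ell}$ with $\vc_{k,\ell}:=\langle\vf,\phi_{k,\ell}\rangle$. Applying $m(D)$ coordinate-wise and re-expanding each term in the frame gives $m(D)\vf=\sum_{j,n}\tilde\vc_{j,n}\phi_{j,n}$, where $\tilde\vc=\mathbf{A}\vc$ with $a_{(j,n)(k,\ell)}:=\langle m(D)\phi_{k,\ell},\phi_{j,n}\rangle$. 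The desired bound will then follow by chaining together Proposition \ref{prop:coef} (coefficient control), a boundedness estimate for a rescaled version of $\mathbf{A}$, and Proposition \ref{prop:recon} (reconstruction).

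The core technical step is to show that the rescaled matrix $\mathbf{B}:=(r_k^{-b}\,a_{(j,n)(k,\ell)})$ lies in the class $\textbf{ad}_{p,q}^{\alpha,s}(W)$ introduced in Definition \ref{def:mad}. For this I would verify that the rescaled system $\eta_{k,\ell}:=r_k^{-b}\,m(D)\phi_{k,\ell}$ satisfies the molecular time-frequency decay conditions \eqref{two} and \eqref{four} of Lemma \ref{bubbleboy} uniformly in $k,\ell$ with arbitrarily large parameters $N,L,P$. The frequency support of $\hat\eta_{k,\ell}$ is inherited from $\hat\phi_{k,\ell}$, hence contained in $B(\xi_k,cr_k)$, on which $\langle\xi\rangle\asymp\langle\xi_k\rangle$. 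The key derivative estimate is then a Leibniz computation: expanding $\partial^\gamma(m\hat\phi_{k,\ell})$ as a sum of products $\partial^\beta m\cdot\partial^{\gamma-\beta}\hat\phi_{k,\ell}$, combining the symbol bound $|\partial^\beta m(\xi)|\lesssim\langle\xi\rangle^{b-\alpha|\beta|}$ with the intrinsic frame scaling $|\partial^{\gamma-\beta}\hat\phi_{k,\ell}(\xi)|\lesssim r_k^{-n/2-|\gamma-\beta|}(1+r_k^{-1}|\xi-\xi_k|)^{-M}$, and absorbing the resulting natural amplification into the $r_k^{-b}$ prefactor, yields bounds for $\hat\eta_{k,\ell}$ of the form \eqref{four}. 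The matching physical-side estimate \eqref{two} then follows by a standard integration-by-parts argument on the Fourier side (a Paley--Wiener type computation). Lemma \ref{bubbleboy} applied to the two systems $\{\eta_{k,\ell}\}$ and $\{\phi_{j,n}\}$ now yields the required pairing decay, and the freedom in choosing $N,L,P$ places $\mathbf{B}$ inside $\textbf{ad}^{\alpha,s}_{p,q}(W)$ for the thresholds of Definition \ref{def:mad}.

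With $\mathbf{B}\in\textbf{ad}_{p,q}^{\alpha,s}(W)$ in hand, Proposition \ref{prop:di} gives $\|\mathbf{B}\vs\|_{m^{\alpha,s}_{p,q}(W)}\lesssim\|\vs\|_{m^{\alpha,s}_{p,q}(W)}$ for any finite sequence $\vs$. Taking $\vs_{k,\ell}:=r_k^b\vc_{k,\ell}$, we have $\tilde\vc=\mathbf{A}\vc=\mathbf{B}\vs$ and $\|\vs\|_{m^{\alpha,s}_{p,q}(W)}=\|\vc\|_{m^{\alpha,s+b}_{p,q}(W)}$, so together with Proposition \ref{prop:coef} one obtains $\|\tilde\vc\|_{m^{\alpha,s}_{p,q}(W)}\lesssim\|\vf\|_{M^{\alpha,s+b}_{p,q}(W)}$. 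Applying Proposition \ref{prop:recon} to $\sum_{j,n}\tilde\vc_{j,n}\phi_{j,n}=m(D)\vf$ then delivers \eqref{eq:bdd}, and extension by density completes the proof. The principal obstacle is the molecular verification for $\eta_{k,\ell}$: namely the Leibniz and integration-by-parts bookkeeping needed to align the $\alpha$-adapted symbol condition on $m$ with the intrinsic $r_k$-scaling of $\phi_{k,\ell}$ uniformly in $k$, ensuring that the $r_k^{-b}$ prefactor cancels exactly the amplification produced by $m$ on the $k$-th frequency block. Every other step is a direct invocation of the framework built in Sections \ref{sec:ms} and \ref{sec:4}.
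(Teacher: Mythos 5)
Your overall strategy --- linearise via the tight frame, reduce to an almost-diagonal matrix, then chain Propositions \ref{prop:coef}, \ref{prop:di} and \ref{prop:recon} --- is the same scaffolding the paper uses, and the reduction to a boundedness statement on $m^{\alpha,s}_{p,q}(W)$ is correct. The gap is in the core technical step, the molecular verification.

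You rescale by $r_k^{-b}$ and claim that $\eta_{k,\ell}:=r_k^{-b}m(D)\phi_{k,\ell}$ satisfies the molecular estimates \eqref{two}--\eqref{four} uniformly in $k$, ``absorbing the resulting natural amplification into the $r_k^{-b}$ prefactor.'' This absorption does not take place. On $\supp\hat\phi_{k,\ell}\subset B(\xi_k,cr_k)$ one has $\langle\xi\rangle\asymp\langle\xi_k\rangle$, so the symbol hypothesis gives $|m(\xi)|\lesssim\langle\xi_k\rangle^b$, while $r_k\asymp\langle\xi_k\rangle^\alpha$ by Eq.\ \eqref{eq:rk}. Hence
$$
|\hat\eta_{k,\ell}(\xi)| = r_k^{-b}\,|m(\xi)|\,|\hat\phi_{k,\ell}(\xi)|\ \lesssim\ \langle\xi_k\rangle^{(1-\alpha)b}\,|\hat\phi_{k,\ell}(\xi)|,
$$
and the prefactor $\langle\xi_k\rangle^{(1-\alpha)b}$ is unbounded in $k$ whenever $b>0$ and $\alpha<1$. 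Therefore $\{\eta_{k,\ell}\}$ does \emph{not} satisfy \eqref{four} with a $k$-uniform constant, Lemma \ref{bubbleboy} cannot be invoked, and the rescaled matrix $\mathbf{B}=(r_k^{-b}a_{(j,n)(k,\ell)})$ need not lie in $\textbf{ad}_{p,q}^{\alpha,s}(W)$. The cancellation that does produce a uniform bound on the frequency block of index $k$ is by the factor $\langle\xi_k\rangle^{-b}$, not $r_k^{-b}$; these agree only when $\alpha=1$.

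The paper proceeds differently and avoids this issue. Rather than testing $m(D)\phi_{k,\ell}$ against a general molecular template and invoking Lemma \ref{bubbleboy}, it normalises by $\langle\xi_k\rangle^{-b}$ and computes $\langle m(D)\varphi_{k,\ell},\varphi_{j,m}\rangle$ directly from the frame structure in Eq.\ \eqref{eq:TF}. Compact frequency support gives $\langle m(D)\varphi_{k,\ell},\varphi_{j,m}\rangle=0$ unless $Q_k\cap Q_j\neq\emptyset$ (so $r_k\asymp r_j$, and both the $\min\{(r_j/r_k)^M,(r_k/r_j)^M\}$ and the $(1+\max(r_k,r_j)^{-1}|\xi_k-\xi_j|)^{-J}$ factors in Definition \ref{def:mad} are automatic). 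For the surviving entries, the affine change of variable $\xi=T_ky=r_ky+\xi_k$ reduces the pairing to a Fourier decay estimate for the compactly supported function $g_{k,j}(y)=m(T_ky)\theta_k^\alpha(T_ky)\theta_j^\alpha(T_ky)$, whose derivatives are controlled by $\langle\xi_k\rangle^b$ via Leibniz and the chain rule; the normalisation $\langle\xi_k\rangle^{-b}$ then yields a $k$-uniform bound. If you wish to keep the Lemma \ref{bubbleboy} route, you must switch the prefactor to $\langle\xi_k\rangle^{-b}$, but then you should also revisit the bookkeeping that converts the diagonal prefactor into a shift of the discrete smoothness index: the spaces $m^{\alpha,s}_{p,q}(W)$ carry the weight $r_k^s$, and $\langle\xi_k\rangle^{b}\neq r_k^{b}$ for $\alpha<1$, so the step ``$\|\vs\|_{m^{\alpha,s}_{p,q}(W)}=\|\vc\|_{m^{\alpha,s+b}_{p,q}(W)}$'' has to be re-examined once the prefactor changes.
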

\begin{proof}
Calling on Proposition \ref{prop:di} and Corollary \ref{cor:recon}, it is straightforward to verify that it suffices to show that the matrix
\begin{equation}\label{eq:bdda}
 \left\{\big\langle \langle \xi_k\rangle^{-b} m(D) \varphi_{k,\ell},\varphi_{j,m}\big\rangle\right\}\in \textbf{ad}_{{p},q}^s(W).
 \end{equation}

Let $Q_k=B_k^\alpha$ be the $\alpha$-covering from Example \ref{ex:cov}. We first observe, using the compact support properties of the system $\{\phi_{k,\ell}\}$, that $\langle m(D)\varphi_{k,\ell},\varphi_{j,m}\rangle=0$ whenever $Q_k\cap Q_j=\emptyset$. Let us therefore focus on the case
$Q_k\cap Q_j\not=\emptyset$, where we have $r_k\asymp r_j$ (with constants independent of $k$ and $j$). The equivalence $r_k\asymp r_j$ in turn implies that we need to verify, 
\begin{equation}\label{eq:bddb}
\big|\big\langle \langle \xi_k\rangle^{-b} m(D) \varphi_{k,\ell},\varphi_{j,m}\big\rangle\big|\asymp (1-|\ell-m|)^{-J-\delta},\qquad Q_k\cap Q_j\not=\emptyset.
\end{equation}
We have, using Eq.\ \eqref{eq:TF},
\begin{align*}
    \langle m(D)\varphi_{k,\ell},\varphi_{j,m}\rangle&=\int_{\Rn} m(\xi)\theta_{k}^\alpha(\xi)\theta_{j}^\alpha(\xi)e_{k,\ell}(\xi)\overline{e_{j,m}}(\xi)\,d\xi,
\end{align*}
and by the affine change of variable $\xi:=T_ky:=r_k y+\xi_k$,
\begin{align*}
    \langle m(D)\varphi_{k,\ell},\varphi_{j,m}\rangle&=r_k^n\int_{\Rn} m(T_k y)\theta_{k}^\alpha(T_k y)\theta_{j}^\alpha(T_k y)e_{k,\ell}(T_k y)\overline{e_{j,m}}(T_k y)\,dy\\
    &=(2\pi)^{-n}\int_{\Rn} m(T_k y)\theta_{k}^\alpha(T_k y)\theta_{j}^\alpha(T_k y)\\&\phantom{(2\pi)^{-n}aaa}\times \exp\left[i\frac{\pi}{a}\bigg(\big(\ell-\frac{r_k}{r_j}m\big)\cdot y -\frac{r_k}{r_j}m\cdot k +m\cdot j\bigg)\right]\,dy.
\end{align*}
Hence, letting $g_{k,j}(y):=m(T_k y)\theta_{k}^\alpha(T_k y)\theta_{j}^\alpha(T_k y)$, we obtain
$$|\langle m(D)\varphi_{k,\ell},\varphi_{j,m}\rangle|\leq C\left|\cF[g_{k,j}]\bigg(\frac{\pi}{a}\bigg[\frac{r_k}{r_j}m-\ell\bigg]\bigg)\right|.$$
Now we proceed to make a standard decay estimate for the Fourier transform of $g_{k,j}$. Notice that
$\theta_{k}^\alpha(T_k \cdot)\theta_{j}^\alpha(T_k \cdot)$ is $C^\infty$ with support contained in a compact set $\Omega$ that can be chosen independent of $k$ and $j$, which can be verified using the fact that  $r_k\asymp r_j$. Hence, by the Leibniz rule, one obtains for $\beta\in (\bN\cup \{0\})^n$,
\begin{align*}|\partial^\beta [g_{k,j}](\xi)|&\leq C_\beta\mathbf{1}_\Omega(\xi)\sum_{\eta\leq \beta} \partial^\eta[m(T_k\cdot)](\xi),\qquad \xi\in \Rn.
\end{align*}
Then by the chain-rule, recalling that $r_k=\langle \xi_k\rangle^\alpha$,
\begin{align*}|\partial^\beta [g_{k,j}](\xi)|&\leq C_\beta \mathbf{1}_\Omega(\xi)\sum_{\eta\leq \beta} r_k^{|\eta|}[(\partial^\eta m)(T_k\cdot)](\xi)\\
&\leq C\sum_{\eta\leq \beta}r_k^{|\eta|} \mathbf{1}_\Omega(\xi) \langle T_k\xi\rangle^{b-\alpha |\eta|}\\
&\leq C\mathbf{1}_\Omega(\xi)\sum_{\eta\leq \beta}\langle \xi_k\rangle^{\alpha|\eta|} \langle \xi_k\rangle^{b-\alpha |\eta|}.
\end{align*}
Standard estimates now show that for any $N>0$, there exists $C_N<\infty$ such that
$$\langle \xi_k\rangle^{-b}|\langle m(D)\varphi_{k,\ell},\varphi_{j,m}\rangle|\leq C_N (1+|m-\ell|)^{-N},$$
whenever $Q_k\cap Q_j\not=\emptyset$. By the observation in Eq.\ \eqref{eq:bddb}, we may therefore conclude that \eqref{eq:bdda} holds, and consequently, that the multiplier result \eqref{eq:bdd} also holds.
\end{proof}
It is well-know that for $b\in\bR$, the bracket-function $\langle\cdot\rangle^b$ satisfies the estimate
$$|[\partial^\beta \langle\cdot\rangle^b](\xi)|\leq C_\beta \brac{\xi}^{b-|\beta|},\qquad \xi\in\bR^n.$$
for any multi-index $\beta \in (\bN\cup\{0\})^n$.
We therefore have the following easy corollary, where we use  $\langle\cdot\rangle^b\langle\cdot\rangle^{-b}\equiv 1$ for the final norm-equivalence.
\begin{corollary}
Let $1\leq p<\infty$, $0<q<\infty$, $\alpha\in [0,1)$, and suppose that $W\in \mathbf{A}_p$. Fix $b\in\bR$. Then
\begin{equation}\label{eq:bessel}
\langle D\rangle^b:M_{p,q}^{\alpha,s+b}(W)\rightarrow M_{p,q}^{\alpha,s}(W).    
\end{equation}
Moreover, we have the norm equivalence
$$ \|g\|_{M_{p,q}^{\alpha,s+b}(W)}\asymp \|\langle D\rangle^b g\|_{M_{p,q}^{\alpha,s}(W)},$$
 for $g\in M_{p,q}^{\alpha,s+b}(W)$.
\end{corollary}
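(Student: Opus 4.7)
The plan is to derive this as an immediate application of the preceding multiplier proposition applied to the symbol $m(\xi) := \langle \xi\rangle^b$, followed by a duality-style composition argument for the norm equivalence.

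First I would verify the smoothness hypothesis of the multiplier proposition. The stated pointwise bound $|\partial^\beta \langle\cdot\rangle^b(\xi)| \leq C_\beta \langle\xi\rangle^{b-|\beta|}$ together with $\alpha \in [0,1)$ gives
\[
\langle\xi\rangle^{\alpha|\eta|-b}|\partial^\eta\langle\cdot\rangle^b(\xi)|\leq C_\eta\langle\xi\rangle^{\alpha|\eta|-|\eta|} = C_\eta\langle\xi\rangle^{(\alpha-1)|\eta|} \leq C_\eta,
\]
uniformly in $\xi\in\bR^n$, since $\alpha-1\leq 0$. Thus the hypotheses of the preceding proposition are satisfied with the given $b$, and one concludes immediately that $\langle D\rangle^b : M_{p,q}^{\alpha,s+b}(W)\to M_{p,q}^{\alpha,s}(W)$ is bounded, which establishes \eqref{eq:bessel}.

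For the norm equivalence, I would apply the same proposition a second time with $b$ replaced by $-b$ and $s$ replaced by $s+b$, yielding the bounded map $\langle D\rangle^{-b} : M_{p,q}^{\alpha,s}(W)\to M_{p,q}^{\alpha,s+b}(W)$. On a dense subspace (e.g.\ $\bigoplus_{j=1}^N \dS$, which is dense in every $M^{\alpha,\sigma}_{p,q}(W)$ by Remark \ref{rem:dense}), the Fourier-multiplier algebra gives $\langle D\rangle^{-b}\langle D\rangle^b g = g$ since the product of symbols is identically one. Combining the two directions,
\[
\|g\|_{M_{p,q}^{\alpha,s+b}(W)} = \|\langle D\rangle^{-b}\langle D\rangle^b g\|_{M_{p,q}^{\alpha,s+b}(W)}\leq C\|\langle D\rangle^b g\|_{M_{p,q}^{\alpha,s}(W)}\leq C'\|g\|_{M_{p,q}^{\alpha,s+b}(W)},
\]
which is the desired two-sided estimate on a dense subset, and hence on all of $M^{\alpha,s+b}_{p,q}(W)$ by continuous extension.

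The only mild subtlety is making sure the composition $\langle D\rangle^{-b}\langle D\rangle^b = I$ is justified at the distributional level at which these operators act on the matrix-weighted modulation spaces; this is handled cleanly by restricting to $\bigoplus_{j=1}^N\dS$, where both operators are honest Fourier multipliers with Schwartz-compatible symbols, and then invoking the density result of Remark \ref{rem:dense} together with the boundedness established above to transfer the equivalence to the whole space. No new estimates beyond the preceding proposition are needed.
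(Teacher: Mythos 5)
Your proposal is correct and follows exactly the route the paper takes: verify the symbol hypothesis using $|\partial^\beta\langle\cdot\rangle^b(\xi)|\leq C_\beta\langle\xi\rangle^{b-|\beta|}$ and $\alpha\leq 1$, apply the multiplier proposition for boundedness, then exploit $\langle\cdot\rangle^b\langle\cdot\rangle^{-b}\equiv 1$ together with a second application of the proposition (with $b\mapsto -b$, $s\mapsto s+b$) for the two-sided estimate. Your explicit density argument via Remark \ref{rem:dense} to justify the composition $\langle D\rangle^{-b}\langle D\rangle^{b}=I$ is a reasonable elaboration of what the paper leaves implicit.
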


\appendix
\section{Completeness of the $\alpha$-modulation spaces}\label{sec:complete}
Here we complete the proof of  Proposition \ref{prop:complete}.
We recall that $\bigoplus_{j=1}^N \dS$ denotes the family of vector functions $\vf=(f_1,\ldots,f_N)^T$ with $f_i\in\dS$, $i=1,\ldots,N$. We equip the space with the induced semi-norms
$$p(\vf):=\sum_{j=1}^N p_d(f_j),\qquad\text{with } p_d(f_i):=\sup_{\xi\in\bR^n}  \brac{\xi}^d\sum_{|\eta|\leq d} |\partial^\eta\hat{f}_i(\xi)|.$$
As before, we let $\bigotimes_{j=1}^N \mathcal{S}'(\bR^n)$ denote the corresponding family of vector-valued tempered distributions.


\begin{proof}[Completion of the proof of Proposition \ref{prop:complete}]
We first show the embedding $\bigoplus_{j=1}^N\dS\hookrightarrow M^{\alpha,s}_{p,q}(W)$. Let $\vf\in \bigoplus_{j=1}^N\dS$, and put  $w(x):=\|W^{1/p}(x)\|^p$. As $W\in \mathbf{A}_p$, it is known that $w$ belongs to scalar $A_p(\bR^n)$, see \cite[Corollary 2.3]{Gol03a}. We will need the fact that scalar $A_p$-wights have moderate average growth in the sense that
\begin{equation}\label{eq:gr}
    \int_{\bR^n} w(x) \brac{x}^{-n(p+\epsilon)}\,dx<+\infty,
    \end{equation}
    for any $\epsilon>0$, 
see, e.g.,  \cite[Chap.\ IX, Proposition 4.5]{torchinsky_real-variable_1986}.

Let $L>0$. We have, for $d> \max\{np,n\alpha+L\}$,

\begin{align}
\int_{\bR^n} |W^{1/p}(x)	\theta_k^\alpha(D)\vf(x)|^p\,dx&\leq 
\int_{\bR^n} \|W^{1/p}(x)\|^p|	\theta_k^\alpha(D)\vf(x)|^p\,dx\nonumber\\
&= \int_{\bR^n} w(x)|	\theta_k^\alpha(D)\vf(x)|^p\,dx\nonumber\\
&\leq  \|\brac{\cdot}^{d}	\theta_k^\alpha(D)\vf\|_\infty^p\int_{\bR^n} w(x)\brac{x}^{-d}\,dx\nonumber\\
&\leq  C\|\brac{\cdot}^{d}	\theta_k^\alpha(D)\vf\|_\infty^p.\label{eq:wp}
\end{align}
 For the scalar function $f_i\in \dS$ it can be shown (see, e.g., \cite[Prop.\ 4.3.(ii)]{MR4082240}) that for $d> \max\{np,n\alpha+L\}$,
\begin{equation}\label{eq:thet}
\|\brac{\cdot}^d	\theta_k^\alpha(D)f_i\|_\infty\leq c\brac{k}^{\frac{-L}{1-\alpha}}p_d(f_i),
\end{equation}
with $c$ independent of $f_i$. 
Hence, using \eqref{eq:gr} and \eqref{eq:wp}, we obtain
$$\|	\theta_k^\alpha(D)\vf\|_{L^p(W)}\leq c_d\brac{k}^{\frac{-L}{1-\alpha}}p_d(\vf)
=c_dr_k^{-L/\alpha}p_d(\vf).$$ Recall that we may choose $L$ arbitrarily large, and for sufficiently large $L$, we  obtain
$$\|\vf\|_{M^{\alpha,s}_{p,q}(W)}=\|\{r_k^s\|	\theta_k^\alpha(D)\vf\|_{L^p(W)}\}_k\|_{\ell^q}\leq c_dp_d(\vf),$$
 for $d$ suitably large, but independent of $\vf$. This provides the wanted embedding.
 
 We now turn to the embedding  $M^{\alpha,s}_{p,q}(W)\hookrightarrow \bigoplus_{j=1}^N\mathcal{S}'(\bR^n)$. Let us first consider the case $1<p<\infty$,
  Take $\vf\in M^{\alpha,s}_{p,q}(W)$, and let $\boldsymbol{\theta}=(\theta_1,\ldots,\theta_N)^T\in  \bigoplus_{j=1}^N\dS$. Then, using the smooth resolution of the identity $\sum_k \theta_k^\alpha(\xi)^2=1$ from Eq.\ \eqref{eq:theta}, we obtain
 \begin{align*}
     \langle \vf,\boldsymbol{\theta}\rangle_{\bC^N}
     &=\sum_{k\in\bZ^n}  \langle \theta_k^\alpha(D)\vf,\theta_k^\alpha(D)\boldsymbol{\theta}\rangle_{\bC^N}\\
     &=\sum_{k\in\bZ^n}  \langle r_k^{s}W^{1/p}\theta_k^\alpha(D)\vf,r_k^{-s}W^{-1/p}\theta_k^\alpha(D)\boldsymbol{\theta}\rangle_{\bC^N}.
 \end{align*}
 In the case $1\leq q<\infty$, we use H\"older's inequality twice to obtain
 $$\int_{\bR^n} |\langle \vf(x),\boldsymbol{\theta}(x)\rangle_{\bC^N}|\,dx
 \leq
 \|\{r_k^s\|\theta_k^\alpha(D)\vf\|_{L^p(W)}\}_k\|_{\ell^q}
 \|\{r_k^{-s}\|\theta_k^\alpha(D)\boldsymbol{\theta}\|_{L^{p'}(W^{-p'/p})}\}_k\|_{\ell^{q'}},
 $$
 with $q'$ the dual H\"older exponent to $q$.
 It is known that $W^{-p'/p}\in \mathbf{A}_{p'}$, see \cite[Corollary 3.3]{Rou03a}. Hence, we may use the embedding already obtained to conclude that, with $d$ suitably large, $$\|\{r_k^{-s}\|\theta_k^\alpha(D)\boldsymbol{\theta}\|_{L^{p'}(W^{-p'/p})}\}_k\|_{\ell^{q'}}\leq c p_d(\boldsymbol{\theta}),$$  so
 $$\int_{\bR^n} |\langle \vf(x),\boldsymbol{\theta}(x)\rangle_{\bC^N}|\,dx
 \leq c\|\vf\|_{M^{\alpha,s}_{p,q}(W)}p_d(\boldsymbol{\theta}),$$
 which proves the  continuous embedding $M^{\alpha,s}_{p,q}(W)\hookrightarrow \bigoplus_{j=1}^N\mathcal{S}'(\bR^n)$. For $0<q<1$ the proof is similar starting from the estimate
 $$|\langle \vf,\boldsymbol{\theta}\rangle_{\bC^N}|^q\leq \sum_{k\in\bZ^n} r_k^{sq}\|\theta_k^\alpha(D)\vf\|_{L^p(W)}^q\big[\sup_\ell r_\ell^{-s}\|\theta_\ell^\alpha(D)\boldsymbol{\theta}\|_{L^{p'}(W^{-p'/p})}\big]^q.$$
 In case $p=1$, we may adapt the same proof relying on the estimates \eqref{eq:thet} and 
 $$\int_{\bR^n} |\langle \vf(x),\boldsymbol{\theta}(x)\rangle_{\bC^N}|\,dx
 \leq
 \|\{r_k^s\|\theta_k^\alpha(D)\vf\|_{L^1(W)}\}_k\|_{\ell^q}
 \|\{r_k^{-s}\|\theta_k^\alpha(D)\boldsymbol{\theta}\|_{\infty}\}_k\|_{\ell^{q'}}.
 $$

Now we turn to the proof of part (b). Let $\{A_Q\}_{Q\in\cD}$ be a sequence of reducing operators associated with $W$, and suppose  $\{\vf_n\}_n$ is a Cauchy sequence in $M^{\alpha,s}_{p,q}(W)$. The sequence is also Cauchy in the complete space $\bigoplus_{j=1}^N\mathcal{S}'(\bR^n)$ by part (a). Hence, the sequence has a well-defined limit $\vf\in \bigoplus_{j=1}^N\mathcal{S}'(\bR^n)$. By Proposition \ref{prop:coef}, $\vc^n=\{\langle \vf_n,\phi_{j,\ell}\rangle_{j,\ell}\}$ is Cauchy in $m^{\alpha,s}_{p,q}(\{A_Q\})$. In particular,
\begin{align}
\sup_k r_k^s\bigg\|\sum_{\ell\in\bZ^n}& |Q(k,\ell)|^{-\frac{1}{2}}\|A_{Q(k,\ell)}(\vc_{k,\ell}^m-\vc_{k,\ell}^n)\|\mathbf{1}_{Q(k,\ell)}\bigg\|_{L^p(dt)}\nonumber\\
&\leq \bigg\|\bigg\{ r_k^s\bigg\|\sum_{\ell\in\bZ^n} |Q(k,\ell)|^{-\frac{1}{2}}\|A_{Q(k,\ell)}(\vc_{k,\ell}^m-\vc_{k,\ell}^n)\|\mathbf{1}_{Q(k,\ell)}\bigg\|_{L^p(dt)}\bigg\}_k\bigg\|_{\ell^q} \longrightarrow 0,\label{eq:et}
\end{align}
as $m,n\rightarrow \infty$, which shows that $\{A_{Q(k,\ell)}\vc_{k,\ell}^m\}_m$ is Cauchy in $\bC^N$, and since  $A_{Q(k,\ell)}$ is invertible, we have that 
$\{\vc_{k,\ell}^m\}_m$  is Cauchy in $\bC^N$. Define $\vc_{k,\ell}:=\lim_{m\rightarrow\infty} \vc_{k,\ell}^m$, where we  have 
$\vc_{k,\ell}=\langle \vf,\phi_{j,\ell}\rangle$ since $\vf_n\rightarrow \vf$ in $\bigoplus_{j=1}^N\mathcal{S}'(\bR^n)$ and $\phi_{j,k}\in\dS$. An application of Fatou's lemma shows that
$$\|\{\vc_{k,\ell}\}\|_{m^{\alpha,s}_{p,q}(W)}\leq \liminf_n\|\{\vc_{k,\ell}^n\}\|_{m^{\alpha,s}_{p,q}(W)}<+\infty.
$$
We may write, $$\vf=\sum_{j\in\bZ^n}\sum_{\ell\in \bZ^n} \vc_{j,\ell}\phi_{j,\ell},$$
where we notice that the norm estimate from Eq.\ \eqref{eq:LW} can be used to verify that $\vf$ is a locally integrable vector function. We apply Fatou's lemma to the right-hand side of the estimate \eqref{eq:et}, and together with Proposition \ref{prop:recon}, to  verify that
$\vf_n\rightarrow \vf$ in $M^{\alpha,s}_{p,q}(W)$, proving completeness of the space.
\end{proof}


\begin{thebibliography}{10}

\bibitem{Borup2006a}
L.~Borup and M.~Nielsen.
\newblock Banach frames for multivariate {$\alpha$}-modulation spaces.
\newblock {\em J. Math. Anal. Appl.}, 321(2):880--895, 2006.

\bibitem{MR2292720}
L.~Borup and M.~Nielsen.
\newblock Boundedness for pseudodifferential operators on multivariate
  {$\alpha$}-modulation spaces.
\newblock {\em Ark. Mat.}, 44(2):241--259, 2006.

\bibitem{Borup2007}
L.~Borup and M.~Nielsen.
\newblock Frame decomposition of decomposition spaces.
\newblock {\em J. Fourier Anal. Appl.}, 13(1):39--70, 2007.

\bibitem{MR2423282}
L.~Borup and M.~Nielsen.
\newblock On anisotropic {T}riebel-{L}izorkin type spaces, with applications to
  the study of pseudo-differential operators.
\newblock {\em J. Funct. Spaces Appl.}, 6(2):107--154, 2008.

\bibitem{MR4082240}
G.~Cleanthous and A.~G. Georgiadis.
\newblock Mixed-norm {$\alpha$}-modulation spaces.
\newblock {\em Trans. Amer. Math. Soc.}, 373(5):3323--3356, 2020.

\bibitem{MR3106727}
E.~Cordero, A.~Tabacco, and P.~Wahlberg.
\newblock Schr\"{o}dinger-type propagators, pseudodifferential operators and
  modulation spaces.
\newblock {\em J. Lond. Math. Soc. (2)}, 88(2):375--395, 2013.

\bibitem{MR3544941}
D.~Cruz-Uribe, K.~Moen, and S.~Rodney.
\newblock Matrix {$ A_p$} weights, degenerate {S}obolev spaces, and mappings of
  finite distortion.
\newblock {\em J. Geom. Anal.}, 26(4):2797--2830, 2016.

\bibitem{MR1410258}
D.~E. Edmunds and H.~Triebel.
\newblock {\em Function spaces, entropy numbers, differential operators},
  volume 120 of {\em Cambridge Tracts in Mathematics}.
\newblock Cambridge University Press, Cambridge, 1996.

\bibitem{MR89a:46053}
H.~G. Feichtinger.
\newblock Banach spaces of distributions defined by decomposition methods.
  {II}.
\newblock {\em Math. Nachr.}, 132:207--237, 1987.

\bibitem{fei}
H.~G. Feichtinger.
\newblock Modulation spaces of locally compact abelian groups.
\newblock In R.~Radha, M.~Krishna, and S.~Thangavelu, editors, {\em Proc.\
  Internat.\ Conf.\ on Wavelets and Applications}, pages 1--56. Allied
  Publishers, New Delhi, 2003.

\bibitem{MR87b:46020}
H.~G. Feichtinger and P.~Gr\"{o}bner.
\newblock Banach spaces of distributions defined by decomposition methods. {I}.
\newblock {\em Math. Nachr.}, 123:97--120, 1985.

\bibitem{Frazier1985}
M.~Frazier and B.~Jawerth.
\newblock Decomposition of {B}esov spaces.
\newblock {\em Indiana Univ. Math. J.}, 34(4):777--799, 1985.

\bibitem{Frazier1990}
M.~Frazier and B.~Jawerth.
\newblock A discrete transform and decompositions of distribution spaces.
\newblock {\em J. Funct. Anal.}, 93(1):34--170, 1990.

\bibitem{Frazier:2004ub}
M.~Frazier and S.~Roudenko.
\newblock Matrix-weighted {Besov} spaces and conditions of ${A}_p$ type for
  $0<p\leq 1$.
\newblock {\em Indiana Univ. Math. J.}, 53(5):1225--1254, 2004.

\bibitem{MR4263690}
M.~Frazier and S.~Roudenko.
\newblock Littlewood-{P}aley theory for matrix-weighted function spaces.
\newblock {\em Math. Ann.}, 380(1-2):487--537, 2021.

\bibitem{Gol03a}
M.~Goldberg.
\newblock Matrix {$A_p$} weights via maximal functions.
\newblock {\em Pacific J. Math.}, 211(2):201--220, 2003.

\bibitem{Groebner1992}
P.~Gr\"obner.
\newblock {\em Banachr\"aume glatter {F}unktionen und {Z}erlegungsmethoden}.
\newblock PhD thesis, University of Vienna, 1992.

\bibitem{MR3687948}
J.~Isralowitz, H.-K. Kwon, and S.~Pott.
\newblock Matrix weighted norm inequalities for commutators and paraproducts
  with matrix symbols.
\newblock {\em J. Lond. Math. Soc. (2)}, 96(1):243--270, 2017.

\bibitem{MR4454483}
J.~Isralowitz, S.~Pott, and S.~Treil.
\newblock Commutators in the two scalar and matrix weighted setting.
\newblock {\em J. Lond. Math. Soc. (2)}, 106(1):1--26, 2022.

\bibitem{MR2476899}
M.~Kobayashi, M.~Sugimoto, and N.~Tomita.
\newblock On the {$L^2$}-boundedness of pseudo-differential operators and their
  commutators with symbols in {$\alpha$}-modulation spaces.
\newblock {\em J. Math. Anal. Appl.}, 350(1):157--169, 2009.

\bibitem{MR1469972}
V.~A. Kozlov, V.~G. Maz\cprime~ya, and J.~Rossmann.
\newblock {\em Elliptic boundary value problems in domains with point
  singularities}, volume~52 of {\em Mathematical Surveys and Monographs}.
\newblock American Mathematical Society, Providence, RI, 1997.

\bibitem{NazTre96a}
F.~L. Nazarov and S.~R. Tre\u{\i}l\cprime.
\newblock The hunt for a {B}ellman function: applications to estimates for
  singular integral operators and to other classical problems of harmonic
  analysis.
\newblock {\em Algebra i Analiz}, 8(5):32--162, 1996.

\bibitem{MR2737763}
M.~Nielsen.
\newblock On stability of finitely generated shift-invariant systems.
\newblock {\em J. Fourier Anal. Appl.}, 16(6):901--920, 2010.

\bibitem{Nielsen2012}
M.~Nielsen and K.~N. Rasmussen.
\newblock Compactly supported frames for decomposition spaces.
\newblock {\em J. Fourier Anal. Appl.}, 18(1):87--117, 2012.

\bibitem{NieSik21}
M.~Nielsen and H.~\v{S}iki\'{c}.
\newblock Muckenhoupt matrix weights.
\newblock {\em J. Geom. Anal.}, 31(9):8850--8865, 2021.

\bibitem{Rou03a}
S.~Roudenko.
\newblock Matrix-weighted {B}esov spaces.
\newblock {\em Trans. Amer. Math. Soc.}, 355(1):273--314, 2003.

\bibitem{MR1232192}
E.~M. Stein.
\newblock {\em Harmonic analysis: real-variable methods, orthogonality, and
  oscillatory integrals}, volume~43 of {\em Princeton Mathematical Series}.
\newblock Princeton University Press, Princeton, NJ, 1993.

\bibitem{torchinsky_real-variable_1986}
A.~Torchinsky.
\newblock {\em Real-variable methods in harmonic analysis}.
\newblock Number v. 123 in Pure and applied mathematics. Academic Press,
  Orlando, 1986.

\bibitem{TreVol97a}
S.~Treil and A.~Volberg.
\newblock Wavelets and the angle between past and future.
\newblock {\em J. Funct. Anal.}, 143(2):269--308, 1997.

\bibitem{MR0725159}
H.~Triebel.
\newblock Modulation spaces on the {E}uclidean {$n$}-space.
\newblock {\em Z. Anal. Anwendungen}, 2(5):443--457, 1983.

\bibitem{Vol97a}
A.~Volberg.
\newblock Matrix {$A_p$} weights via {$S$}-functions.
\newblock {\em J. Amer. Math. Soc.}, 10(2):445--466, 1997.

\end{thebibliography}

\end{document}